\def\thesection{\arabic{section}}
\def\theequation{\thesection.\arabic{equation}}
\def\R{\mathbb{R}}
\newcommand{\ds} {\displaystyle}
\newcommand{\e}{\varepsilon}
\newcommand{\pa} {\partial}
\newcommand{\al} {\alpha}
\newcommand{\de} {\delta}
\newcommand{\ga} {\gamma}
\newcommand{\Ga} {\Gamma}
\newcommand{\Om} {\Omega}
\newcommand{\ra} {\rightarrow}
\newcommand{\De} {\Delta}
\newcommand{\la} {\lambda}
\newcommand{\La} {\Lambda}
\newcommand{\noi} {\noindent}
\newcommand{\mb} {\mathbb}
\newcommand{\mc} {\mathcal}
\newcommand{\lra} {\longrightarrow}
\newcommand{\ld} {\langle}
\newcommand{\rd} {\rangle}
\def\theequation{\@arabic{\c@section}.\@arabic{\c@equation}}
\def\R{{I\!\!R}}
\def\QED{\hfill {$\square$}\goodbreak \medskip}
\newtheorem{Theorem}{Theorem}[section]
\newtheorem{Lemma}[Theorem]{Lemma}
\newtheorem{Proposition}[Theorem]{Proposition}
\newtheorem{Corollary}[Theorem]{Corollary}
\newtheorem{Definition}[Theorem]{Definition}
\begin{document}
\vspace{0.01in}

\title
{On the first curve of  Fu\v{c}ik Spectrum Of $p$-fractional Laplacian Operator with nonlocal normal boundary conditions}

\author{Divya Goel\footnote{Department of Mathematics, Indian Institute of Technology Delhi, Hauz Khas, New Delhi-110016, India.  e-mail: divyagoel2511@gmail.com},  Sarika Goyal\footnote{Department of Mathematics, Bennett University, Greater Noida, Uttar Pradesh - 201310, India.\hspace{15mm} e-mail: sarika1.iitd@gmail.com}  and  K. Sreenadh\footnote{Department of Mathematics, Indian Institute of Technology Delhi,
		Hauz Khaz, New Delhi-110016, India.
		e-mail: sreenadh@maths.iitd.ac.in} }

\date{}

\maketitle

\begin{abstract}

In this article, we study the Fu\v{c}ik spectrum of the $p$-fractional
Laplace operator with nonlocal normal derivative conditions which is defined as the set of all $(a,b)\in
\mb
R^2$ such that
$$
\mc (F_p)\left\{
\begin{array}{lr}
\Lambda_{n,p}(1-\al)(-\De)_{p}^{\al} u  + |u|^{p-2}u = \frac{\chi_{\Om_\e}}{\e} (a (u^{+})^{p-1} - b (u^{-})^{p-1}) \;\quad  \text{in}\;
\Om,\quad \\
\mc{N}_{\al,p} u = 0 \; \quad  \mbox{in}\; \mb R^n \setminus \overline{\Om},
\end{array}
\right.
$$
has a non-trivial solution $u$, where $\Om$ is a bounded domain in
$\mb R^n$ with Lipschitz boundary, $p \geq 2$, $n>p \al $, $\e, \al \in(0,1)$ and $\Om{_\e}:=\{x \in \Om:  d(x,\pa \Om)\leq \e \}$. We showed existence of the first non-trivial curve $\mc C$ of this spectrum which is used to obtain the  variational characterization of a second eigenvalue of the problem $\mc (F_p)$.  We also discuss some  properties of this curve $\mc C$, e.g. Lipschitz continuous,
strictly decreasing and asymptotic behaviour and nonresonance with respect to the Fu\v{c}ik spectrum.

\medskip

\noi \textbf{Key words:} nonlocal operator, Fu\v{c}ik spectrum, Steklov problem, Non-resonance.

\medskip

\noi \textit{2010 Mathematics Subject Classification:} 35A15, 35J92, 35J60.

\end{abstract}

\bigskip
\vfill\eject

\section{Introduction}
The Fu\v{c}ik spectrum of $p$-fractional Laplacian with nonlocal normal derivative is defined as
the set $\sum_p$ of all $(a,b)\in \mb
 R^2$ such that
{\small \begin{equation}\label{eq1}
 \La_{n,p}(1-\al)(-\De)_{p}^{\al} u  + |u|^{p-2}u = \frac{\chi_{\Om_\e}}{\e} (a (u^{+})^{p-1} - b (u^{-})^{p-1}) \;\text{in}\;
\Om,\; \mc{N}_{\al,p} u = 0 \;  \mbox{in}\; \mb R^n \setminus \overline{\Om},
\end{equation}}
has a non-trivial solution $u$, where $\Om$ is a bounded domain in $\mb R^n$ with Lipschitz boundary, $p \geq 2$, $\al, \e \in (0,1)$ and $\Om{_\e}:=\{x \in \Om:  d(x,\pa \Om)\leq \e \}$. The $(-\De)^{\al}_p$
is the $p$-fractional Laplacian operator defined as
\begin{equation*}
(-\De)_{p}^{\al} u(x):= 2\; \text{p.v} \int_{\mb
R^n}\frac{|u(x)-u(y)|^{p-2}(u(x)-u(y))}{|x-y|^{n+p\al}} dy \; \text{for all} \;
x\in \mb R^n,
\end{equation*}
and $\mc{N}_{\al,p}$ is the associated nonlocal derivative defined in \cite{va} as
\begin{equation*}
\mc{N}_{\al,p}u(x):= 2  \int_{\Omega} \frac{|u(x)-u(y)|^{p-2}(u(x)-u(y))}{|x-y|^{n+p\al}} dy \quad \text{for all} \;
x\in \mb R^n \setminus \overline{\Om} .
\end{equation*}
\noi In \cite{bre}, Bourgain, Brezis and Mironescu
 proved that for any smooth bounded domain  $\Om \subset \mb R^n,$ $ u \in W^{1,p}(\Om)$, there exist a constant $\La_{n,p}$ such that
\begin{equation*}
\lim_{\al \ra 1 ^-} \La_{n,p} (1-\al)\int _{\Om \times \Om }\frac{|u(x)-u(y)|^{p}}{|x-y|^{n+p\al}} dx  dy= \int_{\Om}|\nabla u|^p dx.
\end{equation*}
The constant $\Lambda_{n,p}$ can be explicitly computed and is given by
\begin{equation*}
\La_{n,p}= \frac{p\Gamma(\frac{n+p}{2})}{2\pi^{\frac{n-1}{2}}\Gamma(\frac{p+1}{2}) }.
\end{equation*}
For $a=b=\la $, the Fu\v{c}ik spectrum in \eqref{eq1} becomes the usual spectrum that satisfies
\begin{equation}\label{eq2}
\La_{n,p}(1-\al)(-\De)_{p}^{\al} u  + |u|^{p-2}u = \frac{ \la}{\e} \chi_{\Om_\e} |u|^{p-2}u  \; \text{in}\;
\Om,\quad\; \mc{N}_{\al,p} u = 0 \;  \mbox{in}\; \mb R^n \setminus \overline{\Om}.
\end{equation}
\noi In \cite{pe}, authors proved that there exists a sequence of eigenvalues $\la_{n,\e}(\Om_{\e})$ of \eqref{eq2}  such that $\la_{n,\e}(\Om_{\e})\ra \infty$ as $n\ra \infty.$ Moreover, $0<\la_{1,\e}(\Om_{\e})<\la_{2,\e}(\Om_{\e})\leq...\leq\la_{n,\e}(\Om_{\e})\leq...$  and the first eigenvalue $\la_{1,\e}(\Om_{\e})$ of \eqref{eq2} is simple, isolated and can be characterized as follows
\[\la_{1,\e}(\Om_{\e}) = \inf_{u\in \mc W^{\al,p}}\left\{  \La_{n,p}(1-\al)\int_{Q}\frac{|u(x)-u(y)|^p}{|x-y|^{n+p\al}}dxdy + \int_{\Om} |u|^p dx :  \int_{\Om_{\e}}|u|^p dx= \e \right\}.\]
 
\noi The  Fu\v{c}ik spectrum was introduced by Fu\v{c}ik (1976) who studied the problem in one dimension with periodic boundary
conditions.  In higher dimensions, the non-trivial first curve in the Fu\v{c}ik spectrum of Laplacian with Dirichlet boundary for bounded domain has been studied in \cite{fg}. Later
in \cite{cfg} Cuesta, de Figueiredo and Gossez studied this problem for $p$-Laplacian operator with Dirichlet boundary condition. The Fu\v{c}ik spectrum in the case of Laplacian, $p$-Laplacian operator with Dirichlet, Neumann and Robin boundary condition has been studied by many authors, for instance \cite{al, cg, ro, ros, pe,kpe}. In \cite{sa}, Goyal and Sreenadh extended the results of \cite{cfg} to nonlocal linear operators which include fractional Laplacian. The existence of  Fu\v{c}ik eigenvalues for $p$-fractional Laplacian operator with Dirichlet boundary conditions has been studied by many authors, for instance refer \cite{dan, inf}. Also, in \cite{hardy}, Goyal discussed the Fu\v{c}ik spectrum of of $p$-fractional Hardy Sobolev-Operator with weight function. A non-resonance
problem with respect to Fu\v{c}cik spectrum is also discussed in many papers \cite{cfg,kpr,mP}.
\\

\noi The inspiring point of our work is \cite{sa, hardy}, where the existence of a nontrivial curve is studied only for $p=2$ but the nature of the curve is left open for $p\ne 2$. In the present work, we extend the results obtained in \cite{sa} to the nonlinear case of $p$-fractional operator for any $p \geq 2$ and also show that this curve is the first curve.  We also showed the variational characterization of the second eigenvalue of the operator associated  with \eqref{eq1}. There is a substantial difference while handling the nonlinear nature of the operator. This difference is reflected while constructing the paths below a mountain-pass level (see the proof of Theorem 1.1).  To the best of our knowledge, no work has been done on the Fu\v{c}ik spectrum for nonlocal operators with nonlocal normal derivative. We would like to remark that the the main result obtained in this paper is new even for the following $p$-fractional Laplacian equation with Dirichlet boundary condition:
\[(-\De)_{p}^{\alpha} u + |u|^{p-2} u = a (u^+)^{p-1}-b (u^-)^{p-1}\;\; \text{in}\;\; \Om, \;\; u=0 \;\; \text{on}\; \mathbb{R}^n\backslash \overline{\Om}.\]
\noi With this introduction, we state our main result:
\begin{Theorem}\label{f31}
Let $s\geq 0$ then the point $(s+c(s),c(s))$ is the first nontrivial
point of $\sum_{p}$ in the intersection between $\sum_{p}$ and the line
$(s,0)+t(1,1)$ of $\mc F_p$.
\end{Theorem}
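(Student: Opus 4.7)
The plan is to adapt the minimax construction of Cuesta-de Figueiredo-Gossez \cite{cfg} (carried out in \cite{sa} for $p=2$) to the present nonlinear nonlocal setting. For $s\ge 0$ fixed, I would work on the energy space $\mc W^{\al,p}$ with the functional
\[
J_s(u) := \La_{n,p}(1-\al)\int_{Q}\frac{|u(x)-u(y)|^p}{|x-y|^{n+p\al}}\,dxdy + \int_{\Om}|u|^p\,dx - \frac{s}{\e}\int_{\Om_\e}(u^+)^p\,dx
\]
restricted to the $C^1$ manifold $M:=\{u\in\mc W^{\al,p}:\int_{\Om_\e}|u|^p\,dx=\e\}$. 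A Lagrange-multiplier calculation shows that $u\in M$ is a critical point of $J_s|_M$ at level $t$ if and only if $u$ solves $(F_p)$ with $(a,b)=(s+t,t)$, so the theorem reduces to locating the smallest critical value whose critical point changes sign.

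Let $\varphi_1>0$ denote the normalized first eigenfunction of \eqref{eq2}, so $\pm\varphi_1\in M$. Introduce
\[
\Ga := \{\ga\in C([-1,1],M):\ga(\pm 1)=\pm\varphi_1\}, \qquad c(s) := \inf_{\ga\in\Ga}\max_{t\in[-1,1]}J_s(\ga(t)).
\]
First I would verify the Palais-Smale condition for $J_s|_M$, using the compact embedding $\mc W^{\al,p}\hookrightarrow L^p(\Om_\e)$ and a Brezis-Lieb type argument tailored to the nonlocal $p$-energy. Next, simplicity and isolation of $\la_{1,\e}(\Om_\e)$ imply that $\pm\varphi_1$ are strict local minima of $J_s|_M$, providing a mountain-pass geometry, so a quantitative deformation lemma gives that $c(s)$ is a critical value. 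The corresponding critical point $u_s$ cannot be $\pm\varphi_1$ (strict local minimum) and cannot be sign-definite away from $\pm\varphi_1$ (uniqueness of the first eigenfunction), hence it changes sign and $(s+c(s),c(s))\in\sum_p$ is genuinely nontrivial.

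For the first-nontrivial-point claim, I would argue by contradiction: suppose a nontrivial $(s+\tau,\tau)\in\sum_p$ with $\la_{1,\e}(\Om_\e)<\tau<c(s)$ has a sign-changing eigenfunction $v\in M$; the aim is to produce $\ga_v\in\Ga$ with $\max J_s(\ga_v)\le\tau$, contradicting the definition of $c(s)$. The path is built from three arcs: a safe arc from $-\varphi_1$ to a suitable normalization of $-v^-$ through the negative cone of $M$; a symmetric arc from a normalization of $v^+$ to $\varphi_1$ through the positive cone, both controlled by the minimality of $\la_{1,\e}(\Om_\e)$ together with $\tau>\la_{1,\e}(\Om_\e)$; and a middle arc joining the two sign-definite endpoints through sign-changing elements of $M$.

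The hardest point, and the one genuinely new relative to \cite{sa}, is the construction and energy estimate of the middle arc when $p\ne 2$, since the Hilbert-space rotation of the linear case is unavailable. Denoting by $\widetilde v^{\pm}$ the normalizations of $v^{\pm}$ on $\Om_\e$, I would use a $p$-homogeneous parametrization of the form
\[
\xi_\theta := \big(\theta^{1/p}\,\widetilde v^+ - (1-\theta)^{1/p}\,\widetilde v^-\big)\big/N(\theta), \qquad \theta\in[0,1],
\]
with $N(\theta)$ chosen to keep $\xi_\theta\in M$. Since $v^+$ and $v^-$ have disjoint supports in $\Om$, the local and constraint terms in $J_s(\xi_\theta)$ split exactly as the convex combination $\theta\,J_s(\widetilde v^+) + (1-\theta)\,J_s(-\widetilde v^-)$, while the nonlocal $p$-energy generates a cross-interaction term which one controls by testing the Fu\v{c}ik equation satisfied by $v$ against $\widetilde v^{\pm}$ and by an elementary convexity inequality for $|a-b|^p$. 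Both $J_s(\widetilde v^+)$ and $J_s(-\widetilde v^-)$ evaluate to $\tau$, so one concludes $J_s(\xi_\theta)\le\tau$ along the middle arc, completing the path and forcing $c(s)\le\tau$, the desired contradiction.
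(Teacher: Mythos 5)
Your overall plan follows the paper's skeleton (minimax definition of $c(s)$, Palais--Smale on $\mc S$, strict local minimality of $\pm\phi_{1,\e}$, mountain-pass value, and then a contradiction via a low-lying path in $\Gamma$), but your path construction is genuinely different from the paper's. The paper routes the path \emph{through} the critical point $u$: it uses $u_1(t)$, $u_2(t)$, $u_3(t)$ to reach the normalized $u^{\pm}$ and $-u^{-}$, then connects $\widetilde u^{-}$ to $\phi_{1,\e}$ by a topological argument (Lemma~\ref{fle01}: any connected component of $\{\tilde J_s<\mu-s\}$ contains a critical point, obtained via Ekeland), and closes the loop with the symmetry estimate $|\tilde J_s(w)-\tilde J_s(-w)|\le s$. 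You instead bypass $u$ entirely, going $-\phi_{1,\e}\to-\widetilde v^{-}\to\widetilde v^{+}\to\phi_{1,\e}$. Your middle arc $\xi_\theta=\theta^{1/p}\widetilde v^{+}-(1-\theta)^{1/p}\widetilde v^{-}$ is sound: with $U=v^{+}(x)-v^{+}(y)$, $V=v^{-}(x)-v^{-}(y)$ and $t=\nu/\lambda$, Lemma~\ref{f2} together with the weak formulation of $(F_p)$ tested against $v^{\pm}$ gives exactly $\tilde J_s(\xi_\theta)\le\theta\tau+(1-\theta)\tau=\tau$ (your claim that $J_s(\widetilde v^{+})$ and $J_s(-\widetilde v^{-})$ ``evaluate to $\tau$'' is slightly off --- they are only $\le\tau$, which only helps). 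If the outer arcs are handled correctly, this route is actually shorter than the paper's, since it dispenses with Lemma~\ref{fle01}, the Ekeland/connected-component argument, and the symmetry trick.

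The genuine gap is in the outer arcs. You claim they are ``controlled by the minimality of $\la_{1,\e}$ together with $\tau>\la_{1,\e}$,'' but minimality of $\la_{1,\e}$ gives only a \emph{lower} bound $\tilde J_s(w)\ge\la_{1,\e}-s$ on the positive cone of $\mc S$; it says nothing about keeping a path connecting $\widetilde v^{+}$ to $\phi_{1,\e}$ below level $\tau$, and for $p\ne 2$ the sublevel sets of $\tilde J_s$ restricted to that cone are not \emph{a priori} connected, nor is a normalized straight line under control because the $L^p(\Om_\e)$-normalization can inflate the energy. The right tool --- which the paper itself introduces for its arc $u_2(t)$ --- is Lemma~\ref{f3}: the $p$-th-power interpolant $\sigma_t:=\big[(1-t)\phi_{1,\e}^p+t(\widetilde v^{+})^p\big]^{1/p}$ stays in $\mc S$, is non-negative, and satisfies $[\sigma_t]_{\al,p}\le(1-t)[\phi_{1,\e}]_{\al,p}+t[\widetilde v^{+}]_{\al,p}$; combined with convexity of $x\mapsto x^p$ this yields $\tilde J_s(\sigma_t)\le(1-t)\tilde J_s(\phi_{1,\e})+t\,\tilde J_s(\widetilde v^{+})\le\tau$, and analogously $-\sigma_t$ handles the negative-cone arc. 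Your sketch does not identify this mechanism, and the justification you do give would not survive being made precise; supplying Lemma~\ref{f3} is what turns your outline into a complete alternative proof.
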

\noi The paper is organized as follows: In section 2 we give some notations and preliminaries. In section 3 we construct a
first nontrivial curve in $\sum_{p}$, described as $(s+c(s),c(s))$.
In section 4 we prove that the lines $\la_{1,\e}(\Om_{\e})\times \mb R$ and $\mb
R\times\la_{1,\e}(\Om_{\e})$ are isolated in $\sum_{p}$, the curve that we obtained
in section 3 is the first nontrivial curve and give the variational
characterization of second eigenvalue of \eqref{eq1}.  In section 5 we
prove some properties of the first curve and non resonance problem.

\section{Preliminaries}
\noi In this section we assemble some requisite material.
\noindent By \cite{va} we know the nonlocal analogue of divergence theorem which states that for any bounded functions $u$ and $v\in C^2$, it holds that
\begin{equation*}
\int_{\Om}(-\De)_{p}^{\al} u(x) dx = -\int _{\Om ^c}\mc{N}_{\al,p}u(x) dx.
\end{equation*}
More generally, we have following integration by parts  formula
\begin{equation*}
\mc H_{\al,p}(u,v)=\int_{\Om}v(x)(-\De)_{p}^{\al} u(x) dx+\int _{\Om ^c} v(x) \mc{N}_{\al,p}u(x) dx,
\end{equation*}
where $\mathcal{H}_{\al,p}(u,v)$ is defined as
\begin{equation*}
\mc H_{\al,p}(u,v):= \int _Q \frac{|u(x)-u(y)|^{p-2}(u(x)-u(y))(v(x)-v(y))}{|x-y|^{n+p\al}} dy, \qquad Q:= \mb R^{2n} \setminus (\Om^c)^2.
\end{equation*}
Now, given a measurable function $ u: \mb R^n\ra \mb R $, we set
\begin{equation} \label{feq12}
\|u\|_{\al,p}:= (\|u\|^p_{L^p(\Om)}+[u]^p_{\al,p})^{\frac{1}{p}},\quad  \text{where } [u]_{\al,p}:=(\mc H_{\al,p}(u,u))^{\frac{1}{p}}.
\end{equation}
\noi Then $\|.\|_{\al,p}$ defines a norm on the space
\begin{equation*}
\mc W^{\al,p}:= \{ u: \mb R^n\ra \mb R \text{ measurable }:\|u\|_{\al,p} < \infty \}.
\end{equation*}
Clearly $\mc W^{\al,p}\subset W^{\al,p}(\Om)$,  where $ W^{\al,p}(\Om)$ denotes the usual fractional Sobolev space endowed with the norm \[\|u\|_{W^{\al,p}}=\|u\|_{L^p}+ \left(\int_{\Om\times\Om} \frac{(u(x)-u(y))^{p}}{|x-y|^{n+p \al }}dxdy \right)^{\frac 1p}.\]
To study the fractional Sobolev space in detail see \cite{ms}.

\begin{Definition}
A function $u \in \mc {W}^{\al,p}$ is a weak solution of \eqref{eq1}, if for every $v\in \mc {W}^{\al,p}$, \\$u$
satisfies
\begin{align*}
 \La_{n,p}(1-\al) \mathcal{H}_{\al,p}(u,v) 
 + \int_{\Om}|u|^{p-2}uv-\frac{a}{\e}
\int_{\Om_{\e}} (u^{+})^{ p-1}v+\frac{b}{\e}\int_{\Om_{\e}} (u^{-})^{ p-1}v=0.
\end{align*}
\end{Definition}
\noi Now, we define the functional $J$ associated to the problem \eqref{eq1} as $J : \mc {W}^{\al,p} \ra \mb R$ such that 
\[  J(u)=  \La_{n,p}(1-\al) \int_{Q}\frac{|u(x)-u(y)|^p}{|x-y|^{n+p\al}} dxdy + \int_{\Om}|u|^p dx - \frac{a}{\e} \int_{\Om_{\e}} (u^{+})^p dx+\frac{b}{\e} \int_{\Om_{\e}} (u^{-})^p dx.\] 
Then $J$ is Fr$\acute{e}$chet differentiable in $ \mc {W}^{\al,p}$ and {for all $v\in \mathcal{W}^{\al,p}$.}
\[
 \langle J^\prime (u),v\rangle=\La_{n,p}(1-\al) \mathcal{H}_{\al,p}(u,v) + \int_{\Om}|u|^{p-2}uv-\frac{a}{\e}
\int_{\Om_{\e}} (u^{+})^{ p-1}v+\frac{b}{\e}
\int_{\Om_{\e}} (u^{-})^{ p-1}v.\]
\noindent For the sake of completeness, we describe the Steklov problem
\begin{equation}\label{stek}
(-\De)_{p} u  + |u|^{p-2}u = 0 \;\quad  \text{in}\;
\Om,\quad
 |\nabla u|^{p-2}\frac{\partial u}{\partial \nu}=\la |u|^{p-2}u  \; \quad  \mbox{on}\; \partial \Om,
\end{equation}
\noi where $\Om$ is a bounded domain and $p>1$. By \cite{pe}, \eqref{eq1} is related to \eqref{stek} in the sense that if $\Om $ be a bounded smooth domain in $\mb R^n$ with Lipschitz boundary and $p\in(1,\infty)$. For a fixed
$u\in W^{1,p}(\Om)\setminus W^{1,p}_0(\Om)$, we have 
\begin{equation*}
\begin{aligned}
\lim_{\e \ra 0^+}\frac{1}{\e}\int_{\Om_{\e}}|u|^pdx=\int_{\pa \Om}|u|^pdS \quad \text{and}& \lim_{\al \ra 1^-}\La_{n,p}(1-\al)[Eu]^p_{\al,p}= \|\nabla u\|^p_{L^p(\Om)},
\end{aligned}
\end{equation*}
\noi  where $E$ is a bounded linear extension operator from $W^{1,p}(\Om)$ to $W^{1,p}_0(B_R)$ such that $Eu=u$ in $\Om$ and $\Om$ is relatively compact in $B_R$, the ball of radius $R$ in $\mathbb{R}^n$. This leads to the following Lemma of \cite{ros}.
\begin{Lemma}\label{f4}
 Let $\Om $ be a smooth domain in $\mb \R^n$ with Lipschitz boundary and $p \in (1,\infty)$. For a fixed
$u\in W^{1,p}(\Om)\setminus W^{1,p}_0(\Om)$, it holds
\begin{equation*}
\lim_{\al \ra 1^-}\frac{\La_{n,p}(1-\al)[Eu]^p_{\al,p}+\|Eu\|^p_{L^p(\Om)}}{\frac{1}{1-\al}\|Eu\|^p_{L^p(\Om_{1-\al})}}= \frac{\|\nabla u\|^p_{L^p(\Om)}+\|u\|^p_{L^p(\Om)}}{\|u\|^p_{L^p(\partial \Om)}}.
\end{equation*}
\end{Lemma}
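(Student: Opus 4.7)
The plan is to reduce the claim to the two convergence identities displayed in the paragraph immediately preceding the lemma, namely the boundary-integral limit $\frac{1}{\e}\int_{\Om_\e}|u|^p\,dx \to \int_{\pa\Om}|u|^p\,dS$ and the Bourgain--Brezis--Mironescu-type identity $\La_{n,p}(1-\al)[Eu]^p_{\al,p} \to \|\nabla u\|^p_{L^p(\Om)}$. I will split the given ratio into its numerator and denominator, pass to the limit in each piece separately, and then invoke the standard quotient rule for limits. Since the deep analytic content (BBM convergence and the strip-to-boundary trace limit) is already available, the work reduces to bookkeeping plus a single positivity check.

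For the numerator, note first that $\|Eu\|^p_{L^p(\Om)} = \|u\|^p_{L^p(\Om)}$ is independent of $\al$, since $E$ is a linear extension operator with $Eu=u$ on $\Om$; this term therefore passes through the limit unchanged. The seminorm contribution $\La_{n,p}(1-\al)[Eu]^p_{\al,p}$ tends to $\|\nabla u\|^p_{L^p(\Om)}$ by the second cited identity. Adding the two contributions shows that the numerator converges to $\|\nabla u\|^p_{L^p(\Om)} + \|u\|^p_{L^p(\Om)}$. For the denominator I would set $\e := 1-\al$, so that $\e \to 0^+$ as $\al \to 1^-$; because $\Om_\e \subset \Om$ and $Eu=u$ on $\Om$, the denominator equals $\frac{1}{\e}\int_{\Om_\e}|u|^p\,dx$, which converges to $\|u\|^p_{L^p(\pa\Om)}$ by the first cited identity.

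The only delicate point is guaranteeing that this limiting denominator is strictly positive, so that the quotient rule actually applies and the limit of the ratio coincides with the ratio of limits. This is precisely where the hypothesis $u\in W^{1,p}(\Om)\setminus W^{1,p}_0(\Om)$ is used: on a Lipschitz domain, $W^{1,p}_0(\Om)$ coincides with the kernel of the trace operator, so $u\notin W^{1,p}_0(\Om)$ forces the boundary trace of $u$ to be nontrivial, yielding $\|u\|^p_{L^p(\pa\Om)}>0$. With this positivity in hand, the quotient rule applied to the two separate limits produces the stated equality. The main obstacle, namely justifying each of the two component convergences, has been absorbed into the deep results of \cite{bre} and \cite{pe,ros} that the paper is entitled to quote directly; the lemma itself is thus essentially a packaging statement, and the proof should be short.
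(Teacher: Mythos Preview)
Your proposal is correct and matches the paper's own treatment: the paper does not supply a self-contained proof but simply quotes the lemma from \cite{ros}, introducing it with ``This leads to the following Lemma'' immediately after recording the two convergence identities you invoke. Your argument makes explicit precisely the passage the paper leaves implicit---taking the quotient of the two limits and using $u\notin W^{1,p}_0(\Om)$ to ensure the boundary trace is nonzero---so there is nothing to add.
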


\noi Taking $\e=1-\al$, by Lemma \ref{f4} the eigenvalue $\la_{1,1-\al}(\Om_{1-\al}) \ra \la_1$ as $\al \ra 1^-$, where $\la_1$ is the first eigenvalue of the operator associated with \eqref{stek}. Similarly, we obtain  that as $ \al \ra 1^-$ the Fu\v{c}ik Spectrum of the operator associated with \eqref{eq1} tends to Fu\v{c}ik Spectrum of the Steklov problem.\\

\noi We shall throughout use the function space $\mc {W}^{\al,p}$ with the norm
$\|.\|$ and we use the standard $L^{p}(\Om)$ space whose norms
are denoted by $\|u\|_{L^p(\Om)}$. Also, we denote $\la_{n,\e}(\Om_{\e})$ by $\la_{n,\e}$. and  $\phi_{1,\e}$ is the eigenfunction  corresponding to $\la_{1,\e}$.

\section{The Fu\v{c}ik spectrum $\sum_{p}$}
In this section, we study  existence of the first nontrivial curve in the Fu\v{c}ik spectrum $\sum_{p}$ of \eqref{eq1}. We find that the points
in $\sum_{p}$ are associated with the critical value of some restricted functional. For  this, for fixed $s\in \mb R$ and  $s\geq 0$, we consider the
functional $J_{s}: \mc{W}^{\al,p} \ra \mb R $ defined by
\begin{align*}
J_{s}(u)= \La_{n,p}(1-\al) \int_{Q}\frac{|u(x)-u(y)|^p}{|x-y|^{n+p\al}} dxdy + \int_{\Om}|u|^p dx - \frac{s}{\e} \int_{\Om_{\e}} (u^{+})^p dx.
\end{align*}
Then $J_{s}\in C^{1}(\mc{W}^{\al,p},\mb R)$ and for any $\phi\in \mc{W}^{\al,p}$

\begin{equation*}
\langle J_{s}^{\prime}(u),\phi \rangle  =  p\text{ }\La_{n,p}(1-\al) \mathcal{H}_{\al,p}(u,\phi) 
 + p \int_{\Om}|u|^{p-2}u \phi dx - \frac{ps}{\e}
\int_{\Om_{\e}} (u^{+})^{ p-1}\phi dx.
\end{equation*}
\noi Also $\tilde{J_{s}}:= J_{s}|_{\mc S}$ is $C^1(\mc{W}^{\al,p},\mb R)$, where $\mc S$ is defined as
\[\mc S :=\left\{u\in \mc{W}^{\al,p}:\;    \; I(u):= \frac{1}{\e}\int_{\Om_{\e}}|u|^p=1\right\}.\]
\noi We first note that $u\in\mc S$ is a critical point of $\tilde{J_{s}}$ if and only if there exists $t\in \mb R$ such that

\begin{equation}\label{feq2}
 \La_{n,p}(1-\al) \mathcal{H}_{\al,p}(u,v) 
-\frac{s}{\e}\int_{\Om_{\e}} (u^{+})^{p-1} v dx = \frac{t}{\e}\int_{\Om_{\e}} |u|^{p-2} u v dx,
\end{equation}
for all $v\in \mc{W}^{\al,p}$. Hence $u\in \mc S$ is a nontrivial weak solution of the problem
\begin{equation*}
\La_{n,p}(1-\al)(-\De)_{p}^{\al} + |u|^{p-2}u = \frac{\chi_{\Om_{\e}}}{\e}\left((s+t)(u^{+})^{p-1} - {t}  (u^{-})^{p-1}\right)\; \text{in}\;
\Om, \; \mc{N}_{\al,p} u = 0 \;\mbox{in}\; \mb R^n \setminus \overline{\Om},
\end{equation*}
\noi which exactly means $(s+t,t)\in \sum_{p}$. Substituting $v=u$ in \eqref{feq2},
we get $t= \tilde{J_{s}}(u)$. Thus we obtain the following Lemma which links the critical point of $\tilde{J_{s}}$ and the spectrum $\sum_{p}$.
\begin{Lemma}
For $s\geq 0$, $(s+t,t)\in \mb R^2$ belongs to the spectrum $\sum_{p}$ if and only if there exists a critical point $u\in \mc S$ of
$\tilde{J_{s}}$ such that $t= \tilde{J_{s}}(u)$, a critical value.
\end{Lemma}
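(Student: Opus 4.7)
The plan is to argue the equivalence via the Lagrange multiplier rule on the $C^1$-manifold $\mc S$ and then a direct algebraic match with the weak form of $(F_p)$. The set $\mc S$ is a $C^1$-Banach submanifold of $\mc W^{\al,p}$ because $I \in C^1(\mc W^{\al,p}, \mb R)$ with $\langle I'(u), u\rangle = p \neq 0$ on $\mc S$.

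For the forward direction, suppose $u \in \mc S$ is a critical point of $\tilde{J_s}$. By Lagrange multipliers there is $\mu \in \mb R$ with $J_s'(u) = \mu I'(u)$; writing this out against a test function $v$ and dividing by $p$ yields the identity \eqref{feq2} with $\mu = t$. Testing \eqref{feq2} against $v = u$, the constraint $I(u) = 1$ together with the pointwise identities $(u^\pm)^{p-1} u = \pm (u^\pm)^p$ and $|u|^{p-2} u \cdot u = |u|^p$ collapses the right-hand side to $t$ and the left-hand side to $J_s(u)$, whence $t = \tilde{J_s}(u)$. Splitting $|u|^{p-2} u = (u^+)^{p-1} - (u^-)^{p-1}$ on the right of \eqref{feq2} and regrouping with the $-\frac{s}{\e}(u^+)^{p-1}$ term on the left recovers the weak form of $(F_p)$ with $(a, b) = (s + t, t)$, so $(s + t, t) \in \sum_p$.

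For the converse, let $u \in \mc W^{\al,p}$ be a nontrivial weak solution of $(F_p)$ at $(a, b) = (s + t, t)$. Since every term is positively $(p - 1)$-homogeneous in $u$, the function can be rescaled by any positive constant. I first verify that $I(u) > 0$: if $u \equiv 0$ on $\Om_\e$, then testing the weak formulation against $u$ itself collapses the right-hand side to zero while the left-hand side equals $\La_{n,p}(1 - \al)[u]_{\al,p}^p + \|u\|_{L^p(\Om)}^p$, forcing $u \equiv 0$ in $\Om$ and $[u]_{\al,p} = 0$, hence $u \equiv 0$ in $\mb R^n$, contradicting nontriviality. Rescaling so that $I(u) = 1$ and regrouping the right-hand side of the weak equation as $\frac{s}{\e}\int_{\Om_\e}(u^+)^{p-1} v + \frac{t}{\e}\int_{\Om_\e}|u|^{p-2} u v$ exhibits $u$ as a solution of \eqref{feq2}, i.e.\ a Lagrange-multiplier critical point of $\tilde{J_s}$, and the preceding testing computation delivers $\tilde{J_s}(u) = t$.

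The only non-routine step is the nontriviality observation $I(u) > 0$ for any nontrivial solution of $(F_p)$, which I handle by the energy identity above; the rest is the Lagrange rule on a codimension-one $C^1$-manifold plus careful bookkeeping of $(u^\pm)^{p-1} u$ versus $(u^\pm)^p$ and the decomposition $|u|^{p-2}u = (u^+)^{p-1} - (u^-)^{p-1}$.
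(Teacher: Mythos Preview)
Your proof is correct and follows essentially the same route as the paper: the Lagrange multiplier characterization of constrained critical points yields \eqref{feq2}, testing with $v=u$ identifies the multiplier as $\tilde{J_s}(u)$, and the algebraic split $|u|^{p-2}u=(u^+)^{p-1}-(u^-)^{p-1}$ matches \eqref{feq2} to the weak form of $(F_p)$ with $(a,b)=(s+t,t)$. Your treatment of the converse is in fact more careful than the paper's brief discussion preceding the lemma, since you explicitly verify $I(u)>0$ for any nontrivial solution (via the energy identity) before normalizing onto $\mc S$; the paper leaves this implicit.
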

\begin{Proposition}
The first eigenfunction $\phi_{1,\e}$ is a global minimum for
$\tilde{J_{s}}$ with $\tilde{J_{s}}(\phi_{1,\e})=\la_{1,\e}-s$. The corresponding point in $\sum_{p}$ is $(\la_{1,\e},\la_{1,\e}-s)$ which lies on the
vertical line through $(\la_{1,\e},\la_{1,\e})$.
\end{Proposition}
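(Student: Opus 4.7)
The plan is to exploit the normalization $\phi_{1,\e}\in\mc S$ together with the Rayleigh-quotient characterization of $\la_{1,\e}$ recalled in the introduction. First I would fix the sign: since $\la_{1,\e}$ is the first eigenvalue of \eqref{eq2}, the associated eigenfunction $\phi_{1,\e}$ can be taken strictly positive, and we normalize it so that $\frac{1}{\e}\int_{\Om_\e}|\phi_{1,\e}|^p\,dx=1$, which places $\phi_{1,\e}\in \mc S$. With this normalization, the variational characterization stated in the introduction gives
\[
\La_{n,p}(1-\al)\int_Q\frac{|\phi_{1,\e}(x)-\phi_{1,\e}(y)|^p}{|x-y|^{n+p\al}}\,dxdy+\int_\Om|\phi_{1,\e}|^p\,dx=\la_{1,\e}.
\]
Since $\phi_{1,\e}>0$, we have $\phi_{1,\e}^+=\phi_{1,\e}$, so $\frac{s}{\e}\int_{\Om_\e}(\phi_{1,\e}^+)^p\,dx=s\cdot I(\phi_{1,\e})=s$. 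Plugging into $\tilde{J_s}$ yields $\tilde{J_s}(\phi_{1,\e})=\la_{1,\e}-s$, which is the claimed value.

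For the global minimality, I would take any $u\in\mc S$ and bound the two pieces of $\tilde{J_s}(u)$ separately. The Rayleigh characterization applied to $u$ (with $\frac{1}{\e}\int_{\Om_\e}|u|^p\,dx=1$) gives
\[
\La_{n,p}(1-\al)\int_Q\frac{|u(x)-u(y)|^p}{|x-y|^{n+p\al}}\,dxdy+\int_\Om|u|^p\,dx\;\geq\;\la_{1,\e}.
\]
For the remaining term, using $s\geq 0$ and the pointwise bound $(u^+)^p\leq |u|^p$,
\[
\frac{s}{\e}\int_{\Om_\e}(u^+)^p\,dx\;\leq\;\frac{s}{\e}\int_{\Om_\e}|u|^p\,dx\;=\;s.
\]
Combining the two estimates gives $\tilde{J_s}(u)\geq \la_{1,\e}-s=\tilde{J_s}(\phi_{1,\e})$, proving that $\phi_{1,\e}$ is a global minimum.

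Finally, to identify the point in $\sum_p$, I would invoke the preceding Lemma: a critical point $u\in\mc S$ of $\tilde{J_s}$ with critical value $t$ corresponds to $(s+t,t)\in\sum_p$. With $t=\la_{1,\e}-s$ this yields $(s+t,t)=(\la_{1,\e},\la_{1,\e}-s)$, whose first coordinate is $\la_{1,\e}$, so it lies on the vertical line through $(\la_{1,\e},\la_{1,\e})$, as claimed. There is no real obstacle here, the only point requiring a little care is knowing that $\phi_{1,\e}$ is sign-definite so that $\phi_{1,\e}^+=\phi_{1,\e}$; this is standard for the first eigenvalue of \eqref{eq2} and is recorded in \cite{pe}.
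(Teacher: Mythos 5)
Your proof is correct and follows essentially the same route as the paper's: bound the Dirichlet-plus-mass term from below by $\la_{1,\e}$ via the variational characterization of the first eigenvalue on $\mc S$, bound $\frac{s}{\e}\int_{\Om_\e}(u^+)^p\,dx\leq s$ using $(u^+)^p\leq|u|^p$ and $u\in\mc S$, and evaluate at the sign-definite eigenfunction $\phi_{1,\e}$ to see the bound is attained. The only cosmetic difference is that you spell out the $\sum_p$-point identification via the preceding Lemma, which the paper leaves implicit.
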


\begin{proof}
We have
\begin{align*}
\tilde{J_{s}}(u)=& \; \La_{n,p}(1-\al) \int_{Q}\frac{|u(x)-u(y)|^p}{|x-y|^{n+p\al}} dxdy + \int_{\Om}|u|^{p}dx-\frac{s}{\e} \int_{\Om_{\e}} (u^{+})^p dx\\
\geq & \frac{\la_{1,\e}}{\e}  \int_{\Om_{\e}} |u|^p dx -\frac{s}{\e} \int_{\Om_{\e}} (u^{+})^p dx
\geq \la_{1,\e} -s.
\end{align*}
Thus $\tilde{J_{s}}$ is bounded below by $\la_{1,\e}-s$.
Moreover,
\[\tilde{J_{s}}(\phi_{1,\e})= \la_{1,\e} - \frac{s}{\e} \int_{\Om_{\e}} (\phi_{1,\e}^{+})^p dx = \la_{1,\e}-s.\]
Thus $\phi_{1,\e}$ is a global minimum of $\tilde{J_{s}}$ with
$\tilde{J_{s}}(\phi_{1,\e})=\la_{1,\e}-s$.\QED
\end{proof}
\begin{Proposition}\label{prop3}
The negative eigenfunction $-\phi_{1,\e}$ is a strict local minimum for $\tilde{J_{s}}$\\ with $\tilde{J_{s}}(-\phi_{1,\e})=\la_{1,\e}$. The
corresponding point in $\sum_{p}$ is $(\la_{1,\e}+s,\la_{1,\e})$, which lies on the horizontal line through $(\la_{1,\e},\la_{1,\e})$.
\end{Proposition}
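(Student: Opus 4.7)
The value $\tilde{J_s}(-\phi_{1,\e})$ is immediate: since $(-\phi_{1,\e})^+\equiv 0$ and $-\phi_{1,\e}\in\mc S$,
\[
\tilde{J_s}(-\phi_{1,\e})=\La_{n,p}(1-\al)[\phi_{1,\e}]^p_{\al,p}+\|\phi_{1,\e}\|^p_{L^p(\Om)}=\la_{1,\e},
\]
using the variational characterization of $\la_{1,\e}$ from the introduction; the identification of the corresponding point $(s+\la_{1,\e},\la_{1,\e})\in\sum_p$ then follows from the preceding Lemma. The real content is the strict local minimum assertion, and my plan is to prove it by contradiction: suppose $u_n\in\mc S$, $u_n\ne-\phi_{1,\e}$, $u_n\to-\phi_{1,\e}$ in $\mc W^{\al,p}$ with $\tilde{J_s}(u_n)\le\la_{1,\e}$, and derive a contradiction.

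The strategy is to split $u_n=u_n^+-u_n^-$ and handle each piece separately. The pointwise inequality $|a-b|^p\ge|a^+-b^+|^p+|a^--b^-|^p$ (valid for $p\ge 1$ by a brief case analysis on signs and convexity of $x\mapsto x^p$) gives $[u_n]^p_{\al,p}\ge[u_n^+]^p_{\al,p}+[u_n^-]^p_{\al,p}$. Writing $E(v):=\La_{n,p}(1-\al)[v]^p_{\al,p}+\|v\|^p_{L^p(\Om)}$ and using $|u_n|^p=(u_n^+)^p+(u_n^-)^p$ together with $u_n\in\mc S$, one obtains
\[
\tilde{J_s}(u_n)\;\ge\;E(u_n^+)+E(u_n^-)-\frac{s}{\e}\int_{\Om_\e}(u_n^+)^p.
\]
For the negative part, the variational characterization of $\la_{1,\e}$ applied to $u_n^-\in\mc W^{\al,p}$ gives $E(u_n^-)\ge\frac{\la_{1,\e}}{\e}\int_{\Om_\e}(u_n^-)^p$. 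For the positive part, I exploit that $\phi_{1,\e}>0$ on $\Om$, so $L^p$-convergence $u_n\to-\phi_{1,\e}$ forces $\de_n:=|\{u_n>0\}\cap\Om_\e|\to 0$ (along a subsequence); H\"older combined with the fractional Sobolev embedding $\mc W^{\al,p}\hookrightarrow L^{p_\al^*}(\Om)$, valid because $n>p\al$ (so $p_\al^*:=np/(n-p\al)>p$), then gives
\[
\frac{1}{\e}\int_{\Om_\e}(u_n^+)^p\;\le\; C\,\de_n^{1-p/p_\al^*}\,E(u_n^+),
\]
i.e.\ $E(u_n^+)\ge\mu_n\cdot\frac{1}{\e}\int_{\Om_\e}(u_n^+)^p$ with $\mu_n\to+\infty$.

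Substituting both bounds and using $\frac{1}{\e}\int_{\Om_\e}(u_n^-)^p=1-\frac{1}{\e}\int_{\Om_\e}(u_n^+)^p$, I arrive at
\[
\tilde{J_s}(u_n)-\la_{1,\e}\;\ge\;\bigl(\mu_n-s-\la_{1,\e}\bigr)\,\frac{1}{\e}\int_{\Om_\e}(u_n^+)^p,
\]
which is strictly positive for large $n$ as soon as $u_n^+\not\equiv 0$ on $\Om_\e$, contradicting $\tilde{J_s}(u_n)\le\la_{1,\e}$. The degenerate case $u_n^+\equiv 0$ on $\Om_\e$ (so $u_n\le 0$ there, with $E(u_n)\le\la_{1,\e}$) is handled by simplicity of $\la_{1,\e}$: such a $u_n\in\mc S$ must then attain the infimum in the characterization, forcing $u_n=-\phi_{1,\e}$ and again contradicting our hypothesis. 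I expect the technically delicate step to be the blow-up estimate $\mu_n\to\infty$ for $E(u_n^+)$ in terms of the vanishing measure $\de_n$; this is precisely where the subcritical embedding, hence the hypothesis $n>p\al$, is indispensable, and it is the only place where the nonlinear (rather than $p=2$) nature of the operator requires care.
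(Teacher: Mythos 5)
Your proof is correct, and it takes a genuinely different route from the paper at the key technical step. Both arguments start identically: argue by contradiction, decompose $u_n=u_n^+-u_n^-$, use the pointwise inequality $|a-b|^p\ge|a^+-b^+|^p+|a^--b^-|^p$ to obtain $\tilde J_s(u_n)\ge E(u_n^+)+E(u_n^-)-\frac{s}{\e}\int_{\Om_\e}(u_n^+)^p$, bound $E(u_n^-)$ from below by $\frac{\la_{1,\e}}{\e}\int_{\Om_\e}(u_n^-)^p$, and then subtract the constraint identity $\la_{1,\e}=\frac{\la_{1,\e}}{\e}\int_{\Om_\e}(u_n^+)^p+\frac{\la_{1,\e}}{\e}\int_{\Om_\e}(u_n^-)^p$. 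The divergence is in how one shows that $E(u_n^+)$ overwhelms $\frac{1}{\e}\int_{\Om_\e}(u_n^+)^p$: the paper normalizes $w_k:=\e^{1/p}u_k^+/\|u_k^+\|_{L^p(\Om_\e)}$, defines $r_k:=E(w_k)$, and proves $r_k\to\infty$ by a soft compactness argument (if $r_k$ were bounded, extract a weak limit $w\ge0$ with $\frac{1}{\e}\int_{\Om_\e}w^p=1$, and contradict the fact that $|\{u_k>0\}\cap\Om_\e|\to0$). You instead obtain the same conclusion quantitatively: H\"older on the vanishing set $\{u_n>0\}\cap\Om_\e$ together with the subcritical Sobolev embedding $\mc W^{\al,p}\hookrightarrow L^{p^*_\al}(\Om)$ yields $E(u_n^+)\ge\mu_n\cdot\frac{1}{\e}\int_{\Om_\e}(u_n^+)^p$ with an explicit rate $\mu_n\sim\de_n^{p/p^*_\al-1}\to\infty$. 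Your route is more direct, avoids extracting and analysing a weak limit, and makes transparent exactly where the hypothesis $n>p\al$ enters (it is also implicitly used by the paper, but only to guarantee the compact embedding behind the weak-limit step). Your handling of the degenerate case $u_n^+\equiv0$ on $\Om_\e$ via simplicity of $\la_{1,\e}$ is also slightly cleaner than the paper's, which dismisses the case $u_k\le0$ a.e.\ in $\Om$ but leaves implicit the case where $u_k$ is positive only outside $\Om_\e$, where $w_k$ would be undefined. One small remark: you cite the splitting inequality for $p\ge1$ via sign analysis and superadditivity of $t\mapsto t^p$; the paper proves it by expanding $(\cdot)^{p/2}$, which is tailored to $p\ge2$ (the standing assumption) — both are fine here, and your more general observation is correct.
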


\begin{proof}
Suppose by contradiction that there exists a sequence $u_k\in\mc  S$, $u_k\ne -\phi_{1,\e}$ with \\$\tilde{J_{s}}(u_k) \leq \la_{1,\e}$,
$u_k\ra -\phi_{1,\e}$ in $\mc{W}^{\al,p}$. We claim that $u_k$ changes sign for sufficiently large $k$. Since $u_k\ra -\phi_{1,\e}$, $u_k$ must be $<0$
for sufficiently large $k$. If $u_k \leq 0$ for a.e $x\in \Om$, then
\begin{align*}
\tilde{J_{s}}(u_k)= \La_{n,p}(1-\al) \int_{Q}\frac{|u_k(x)-u_k(y)|^p}{|x-y|^{n+p\al}} dxdy+\int_{\Om}|u_k|^{p} dx
>\la_{1,\e},
\end{align*}
since $u_k \not\equiv \pm\phi_{1,\e}$ and we get contradiction as
$\tilde{J_{s}}(u_k) \leq \la_{1,\e}$. Therefore the claim is proved. \\
Now, define
$w_k := \frac{\e^{\frac{1}{p}} u_{k}^{+}}{\|u_{k}^{+}\|_{L^p(\Om_{\e})}}$
and
\[r_k := \La_{n,p}(1-\al)\int_{Q}\frac{|w_{k}(x)-w_{k}(y)|^p}{|x-y|^{n+p\al}} dxdy + \int_{\Om}|w_k|^{p} dx. \]
\noi We claim that $r_k \ra \infty$ as $k\ra\infty$. Assume by contradiction that $r_k$ is bounded. Then there exists a
subsequence (still denoted by $\{w_k\}$) of $\{w_k\}$ and $w\in \mc{W}^{\al,p}$ such that $w_k \rightharpoonup w$ weakly in $\mc{W}^{\al,p}$ and $w_k\ra w$ strongly in $L^{p}(\Om)$. It implies $w_k\ra w$ strongly in $L^{p}(\Om_{\e})$. Therefore $\frac{1}{\e} \int_{\Om_{\e}} w^p dx =1$, $w\geq 0$ a.e in $\Om_{\e}$ and so for some $\eta>0$, $\de =|\{x\in \Om_{\e}: w(x)\geq \eta\}|>0$. Since, $u_k\ra -\phi_{1,\e}$ in $\mc{W}^{\al,p}$ and hence in $L^{p}(\Om)$. Therefore, for each $\eta>0$, $|\{x\in \Om_{\e} : u_k(x)\geq \eta\}|\ra 0$ as $k\ra \infty$
and $|\{x\in \Om_{\e} : w_k(x)\geq \eta\}|\ra 0$ as $k\ra\infty$, which is a contradiction to $\eta>0$. Hence, $r_k \ra \infty$.
Clearly, one can have
\begin{align*}
|u_k(x)-&u_k(y)|^p=(|u_k(x)-u_k(y)|^2)^{\frac p2}=[((u_{k}^{+}(x)-u_{k}^{+}(y))-(u_{k}^{-}(x)-u_{k}^{-}(y)))^2]^{\frac p2}\notag\\
=&[(u_{k}^{+}(x)-u_{k}^{+}(y))^2+ (u_{k}^{-}(x)-u_{k}^{-}(y))^2 - 2(u_{k}^{+}(x)-u_{k}^{+}(y))(u_{k}^{-}(x)-u_{k}^{-}(y))]^{\frac p2}\notag\\
=& [(u_{k}^{+}(x)-u_{k}^{+}(y))^2+ (u_{k}^{-}(x)-u_{k}^{-}(y))^2 + 2u_{k}^{+}(x) u_{k}^{-}(y)+ 2 u_{k}^{-}(x) u_{k}^{+}(y)]^{\frac p2}\notag\\
\geq & |u_{k}^{+}(x)-u_{k}^{+}(y)|^p +|u_{k}^{-}(x)-u_{k}^{-}(y)|^p.
\end{align*}
Using the above inequality, we have
\begin{align}\label{fe12}
\tilde{J_{s}}(u_k)=& \La_{n,p}(1-\al) \int_{Q}\frac{|u_k(x)-u_k(y)|^p}{|x-y|^{n+p\al}} dxdy + \int_{\Om}|u_k|^{p} - \frac{s}{\e} \int_{\Om_{\e}} (u^{+}_k)^p dx\notag\\
\geq & \left[ \La_{n,p}(1-\al) \int_{Q}\frac{|u_{k}^{+}(x)-u_{k}^{+}(y)|^p}{|x-y|^{n+p\al}}dxdy + \int_{\Om}|u_k^+|^{p}\right]\notag \\
&\quad +\left[\La_{n,p}(1-\al) \int_{Q}\frac{|u_{k}^{-}(x)-u_{k}^{-}(y)|^p}{|x-y|^{n+p\al}}dxdy + \int_{\Om}|u_k^-|^{p} - \frac{s}{\e} \int_{\Om_{\e}} (u^{+}_{k})^p dx\right]\notag\\
\geq &  \frac{(r_k-s)}{\e} \int_{\Om_{\e}} (u_{k}^{+})^p dx + \frac{\la_{1,\e}}{\e}\int_{\Om_{\e}} (u_{k}^{-})^p dx.
\end{align}
On the other hand, since $u_k\in \mc S$, we obtain
\begin{equation}\label{fe13}
\tilde{J_{s}}(u_k)\leq \la_{1,\e} =   \frac{\la_{1,\e}}{\e}\int_{\Om_{}\e} (u_{k}^{+})^p dx+
\frac{\la_{1,\e}}{\e}\int_{\Om_{}\e} (u_{k}^{-})^p dx.
\end{equation}
\noi From (\ref{fe12}) and (\ref{fe13}), we have
\[\frac{(r_k -s-\la_{1,\e})}{\e} \int_{\Om_{\e}} (u_{k}^{+})^p dx\leq 0,\]
\noi and this implies
\[r_k- s\leq \la_{1,\e},\]
\noi which contradicts the fact that $r_k\ra +\infty$. Therefore, $-\phi_{1,\e}$ is the strict local minimum.\QED
\end{proof}
\begin{Proposition}\label{fpr1}
\cite{AR} Let $Y$ be a Banach space, $g,f \in C^{1}(Y,\mb R)$, $M=\{u\in Y
\;|\; g(u)=1\}$ and $u_0$, $u_1\in M$. let $\e>0$ such that
$\|u_1-u_0\|>\e$ and \[\inf\{f(u): u\in M \;\mbox{and}\;
\|u-u_0\|_{Y}=\e\}>\max\{f(u_0),f(u_1)\}.\]
\noi Assume that $f$ satisfies the $(P.S)$ condition on $M$ and that
\[\Gamma =\{\ga \in C([-1,1], M): \ga(-1)=u_0 \;\mbox{and}\; \ga(1)=u_1\}\]
is non empty. Then $\ds c=\inf_{\ga \in \Gamma}\max_{u\in\ga[-1,1]}
f(u)$ is a critical value of $f|_M$.
\end{Proposition}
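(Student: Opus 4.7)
\medskip
\noindent\textbf{Proof plan for Theorem \ref{f31}.}

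The plan is to realize the candidate point $(s+c(s),c(s))$ as coming from a constrained mountain--pass critical value of $\widetilde{J_s}$ on $\mathcal S$, and then to show that no point of $\Sigma_p$ on the line $\ell_s:=\{(s+t,t):t\in\mathbb R\}$ can lie strictly between the trivial points and this one. First I would set
\[
c(s):=\inf_{\gamma\in\Gamma}\max_{u\in\gamma[-1,1]}\widetilde{J_s}(u),\qquad
\Gamma:=\{\gamma\in C([-1,1],\mathcal S):\gamma(-1)=\phi_{1,\e},\ \gamma(1)=-\phi_{1,\e}\}.
\]
To apply Proposition \ref{fpr1} I must verify: (i) $\Gamma\neq\emptyset$, which follows by taking a straight-line homotopy in $\mathcal W^{\al,p}$ between $\phi_{1,\e}$ and $-\phi_{1,\e}$ and renormalizing by $I(\cdot)^{1/p}$ (the denominator never vanishes because the two endpoints and their convex combinations do not vanish identically on $\Om_\e$); (ii) the Palais--Smale condition for $\widetilde{J_s}|_{\mathcal S}$, which follows in the usual way from the compactness of the embedding $\mathcal W^{\al,p}\hookrightarrow L^p(\Om)$ together with the $(S_+)$-property of the $p$-fractional form $\mathcal H_{\al,p}$; (iii) the geometric separation hypothesis, which is exactly the content of Proposition \ref{prop3} (strict local minimum at $-\phi_{1,\e}$) combined with the global minimum property of $\phi_{1,\e}$. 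Proposition \ref{fpr1} then yields a critical point $u\in\mathcal S$ with $\widetilde{J_s}(u)=c(s)$, and by the preceding lemma this produces the point $(s+c(s),c(s))\in\Sigma_p$.

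Next, to prove that no point of $\Sigma_p\cap\ell_s$ exists strictly between the trivial point $(s+\la_{1,\e},\la_{1,\e})$ and $(s+c(s),c(s))$, I would argue by contradiction. Suppose $(s+t,t)\in\Sigma_p$ with $\la_{1,\e}\leq t<c(s)$, and let $u\in\mathcal S$ be a corresponding eigenfunction with $\widetilde{J_s}(u)=t$. If $t>\la_{1,\e}$ the eigenfunction $u$ must change sign: indeed, were $u$ of constant sign on $\Om$, the simplicity and characterization of $\la_{1,\e}$ together with the computation of $\widetilde{J_s}$ on signed eigenfunctions would force $u=\pm\phi_{1,\e}$, giving $t\in\{\la_{1,\e}-s,\la_{1,\e}\}$, contradicting $t>\la_{1,\e}$ (for $s\geq0$). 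Thus $u^\pm\not\equiv 0$.

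The decisive and most delicate step is then to build an explicit path $\gamma_\ast\in\Gamma$ with
\[
\max_{v\in\gamma_\ast[-1,1]}\widetilde{J_s}(v)\leq t<c(s),
\]
which contradicts the definition of $c(s)$. I would construct $\gamma_\ast$ in three stages: (a) from $\phi_{1,\e}$ to $\e^{1/p}u^+/\|u^+\|_{L^p(\Om_\e)}$ through strictly positive renormalized convex combinations, controlling $\widetilde{J_s}$ on this leg by using $\tilde J_s(v)\leq \widetilde J_s(|v|)$ for positive combinations and the fact that the $L^p$-type functional restricted to positive functions in $\mathcal S$ attains its values in $[\la_{1,\e}-s,t]$; (b) the middle leg $\theta\mapsto v_\theta$ where $v_\theta$ is the renormalization in $\mathcal S$ of the Fu\v{c}\'\i k-interpolation $\cos\theta\,u^+ - \sin\theta\,u^-$, $\theta\in[0,\pi/2]$; this is where the nonlinearity bites and I would invoke the pointwise inequality
\[
|v_\theta(x)-v_\theta(y)|^p\geq |v_\theta^+(x)-v_\theta^+(y)|^p+|v_\theta^-(x)-v_\theta^-(y)|^p
\]
(already used in Proposition \ref{prop3}) together with the eigenvalue equation tested separately against $u^+$ and $u^-$ to obtain $\widetilde{J_s}(v_\theta)\leq t$ for every $\theta$; (c) the symmetric leg from $-\e^{1/p}u^-/\|u^-\|_{L^p(\Om_\e)}$ to $-\phi_{1,\e}$ through strictly negative functions, controlled by $\widetilde{J_s}(v)=\|v\|^p\geq\la_{1,\e}\leq t$.

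The main obstacle, and the point at which the present paper departs from the $p=2$ analysis of \cite{sa,hardy}, is precisely leg (b): because $(-\De)^{\al}_p$ is nonlinear for $p\neq 2$, the eigenfunction $u$ cannot be split as an orthogonal sum of $u^+$ and $-u^-$, and the quantity $\mathcal H_{\al,p}(v_\theta,v_\theta)$ does not reduce to a convex combination of $[u^+]_{\al,p}^p$ and $[u^-]_{\al,p}^p$. The fractional pointwise inequality above compensates for this only as a one-sided bound, and one must combine it carefully with the normalization in $\mathcal S$ and with the weak formulation \eqref{feq2} tested against $u^+$ and $u^-$ separately, in order to extract the desired estimate $\widetilde{J_s}(v_\theta)\leq t$ uniformly in $\theta\in[0,\pi/2]$. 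Once this is in place, the contradiction with $t<c(s)$ completes the proof, showing that $(s+c(s),c(s))$ is indeed the first nontrivial point of $\Sigma_p$ on $\ell_s$.
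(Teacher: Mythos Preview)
Your proposal addresses Theorem~\ref{f31}, not Proposition~\ref{fpr1}. Proposition~\ref{fpr1} is the abstract constrained mountain-pass theorem quoted from \cite{AR}; the paper does not prove it, so there is nothing to compare against for that statement.

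Read as a plan for Theorem~\ref{f31}, the first half (setting up $c(s)$ and checking the mountain-pass geometry) matches the paper, but the contradiction step has two genuine gaps. First, on your middle leg $v_\theta=\cos\theta\,u^+-\sin\theta\,u^-$ the inequality you invoke,
\[
|v_\theta(x)-v_\theta(y)|^p\ge |v_\theta^+(x)-v_\theta^+(y)|^p+|v_\theta^-(x)-v_\theta^-(y)|^p,
\]
bounds the Gagliardo integrand from \emph{below}, whereas you need an \emph{upper} bound on $[v_\theta]_{\al,p}^p$ to conclude $\widetilde{J_s}(v_\theta)\le t$. The paper gets the correct upper bound via Lemma~\ref{f2} (from \cite{second}): with $U=u^+(x)-u^+(y)$, $V=u^-(x)-u^-(y)$ one has $UV\le 0$ and $|U-tV|^p+|t|^p|U-V|^{p-2}(U-V)V\le |U-V|^{p-2}(U-V)U$; combined with \eqref{feq21}--\eqref{feq22} this yields $\widetilde{J_s}(u_1(t)),\,\widetilde{J_s}(u_3(t))\le\mu$. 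A third path $u_2(t)=[(1-t)(u^+)^p+t(u^-)^p]^{1/p}$ is controlled by the hidden-convexity Lemma~\ref{f3}. Your cited inequality cannot replace either of these.

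Second, your end legs through renormalized convex combinations of $\phi_{1,\e}$ with $u^\pm$ have no reason to stay at level $\le t$: a convex combination of two positive functions in $\mc S$ can have arbitrarily large Rayleigh quotient, and your line ``$\widetilde{J_s}(v)=\|v\|^p\ge\la_{1,\e}\le t$'' for negative $v$ gives a lower bound where an upper bound is needed. The paper does not connect to $\pm\phi_{1,\e}$ this way. It shows $\widetilde{J_s}\bigl(\e^{1/p}u^-/\|u^-\|_{L^p(\Om_\e)}\bigr)\le\mu-s$, observes this point is not critical, perturbs into the open sublevel set $\mc O=\{\widetilde{J_s}<\mu-s\}$, and invokes Lemma~\ref{fle01} (every component of $\mc O$ contains a critical point, necessarily $\pm\phi_{1,\e}$ by the minimal choice of $\mu$) to obtain a path $u_4$ to $\phi_{1,\e}$ inside $\mc O$; the reflected path $-u_4$ then reaches $-\phi_{1,\e}$ with $\widetilde{J_s}\le\mu$ via $|\widetilde{J_s}(w)-\widetilde{J_s}(-w)|\le s$. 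Both Lemma~\ref{f2} and this sublevel-set argument are essential and are missing from your plan.
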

\noi We will now find the third critical point via mountain pass Theorem as stated above.
A norm of derivative of the restriction $\tilde{J_{s}}$ of $J_{s}$
at $u\in \mc S$ is defined as
\[\|\tilde{J}_{s}^{\prime}(u)\|_{*}=\min\{\|\tilde{J}_{s}^{\prime}(u)- t I^{\prime}(u)\|:  t\in \mb R\}.\]
\begin{Lemma}\label{fle21}
$J_{s}$ satisfies the $(P.S)$ condition on $\mc S$.
\end{Lemma}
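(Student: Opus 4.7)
The plan is a standard Palais--Smale argument for fractional $p$-Laplacian-type functionals restricted to a Nehari-type manifold, with the nonlinearity handled by the usual $(S_+)$ monotonicity inequality.

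Suppose $\{u_k\} \subset \mathcal{S}$ satisfies $|\tilde{J}_s(u_k)| \leq c$ and $\|\tilde{J}_s'(u_k)\|_* \to 0$. Since $\frac{1}{\varepsilon}\int_{\Omega_\varepsilon}|u_k|^p = 1$ is fixed, the upper bound on $\tilde{J}_s(u_k)$ gives
\[
\Lambda_{n,p}(1-\alpha)[u_k]^p_{\alpha,p} + \|u_k\|^p_{L^p(\Omega)} \leq c + s,
\]
so $\|u_k\|_{\alpha,p}$ is bounded. By reflexivity of $\mathcal{W}^{\alpha,p}$ and the compact embedding $\mathcal{W}^{\alpha,p} \hookrightarrow L^p(\Omega)$ (valid since $n > p\alpha$ and $\Omega$ is Lipschitz), I pass to a subsequence with $u_k \rightharpoonup u$ weakly in $\mathcal{W}^{\alpha,p}$, $u_k \to u$ strongly in $L^p(\Omega)$ and in $L^p(\Omega_\varepsilon)$, and $u_k \to u$ a.e.\ in $\Omega$.

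Next, by the very definition of $\|\cdot\|_*$ there exist $t_k \in \mathbb{R}$ with $\|J_s'(u_k) - t_k I'(u_k)\|_{(\mathcal{W}^{\alpha,p})'} \to 0$. Testing with $u_k$ itself, using $\langle J_s'(u_k), u_k\rangle = p\tilde{J}_s(u_k)$ and $\langle I'(u_k), u_k\rangle = p$, I obtain $t_k = \tilde{J}_s(u_k) + o(1)$, so $\{t_k\}$ is bounded. Testing the same approximate identity against $u_k - u$ and using the strong $L^p(\Omega_\varepsilon)$ convergence to kill both $\int_{\Omega_\varepsilon}(u_k^+)^{p-1}(u_k - u)$ and $\int_{\Omega_\varepsilon}|u_k|^{p-2}u_k(u_k - u)$, while strong $L^p(\Omega)$ convergence handles $\int_\Omega |u_k|^{p-2}u_k(u_k - u)$, I conclude
\[
\Lambda_{n,p}(1-\alpha)\, \mathcal{H}_{\alpha,p}(u_k, u_k - u) \to 0.
\]

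Finally, weak convergence $u_k - u \rightharpoonup 0$ forces $\mathcal{H}_{\alpha,p}(u, u_k - u) \to 0$ as well, so the difference $\mathcal{H}_{\alpha,p}(u_k, u_k - u) - \mathcal{H}_{\alpha,p}(u, u_k - u) \to 0$. Applying the elementary pointwise inequality $(|a|^{p-2}a - |b|^{p-2}b)(a - b) \geq c_p |a - b|^p$ (valid for $p \geq 2$) with $a = u_k(x) - u_k(y)$ and $b = u(x) - u(y)$, the left-hand side dominates $c_p [u_k - u]^p_{\alpha,p}$, so $[u_k - u]_{\alpha,p} \to 0$. Combined with $L^p(\Omega)$ convergence this gives $u_k \to u$ strongly in $\mathcal{W}^{\alpha,p}$. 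The delicate step I expect to be the main obstacle is the careful handling of the Lagrange multiplier $t_k$ so that the nonlocal pairing $\mathcal{H}_{\alpha,p}(u_k, u_k - u)$ can be isolated cleanly; the hypothesis $p \geq 2$ is essential in the final $(S_+)$ inequality (the range $1<p<2$ would require a weighted variant and more delicate estimates).
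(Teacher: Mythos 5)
Your proof is correct and follows essentially the same line as the paper: bound the sequence using the constraint together with the upper bound on $\tilde{J}_s$, extract a weakly convergent subsequence, show the Lagrange multipliers $t_k$ are bounded by testing with $u_k$, test against $u_k - u$ to isolate the nonlocal pairing $\mathcal{H}_{\alpha,p}(u_k, u_k - u) \to 0$, and finish via the $(S_+)$-inequality $(|a|^{p-2}a - |b|^{p-2}b)(a-b) \geq c_p|a-b|^p$ valid for $p\geq 2$. The paper phrases the last two steps as two separate limits that it subtracts rather than pairing directly with $u_k - u$, but this is only a presentational difference.
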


\begin{proof}
Let $J_{s}(u_k)$ and $t_k\in \mb R$ be a sequences such that  for some $K>0$,
\begin{align}\label{feq3}
|J_{s}(u_k)|\leq K
\end{align}
and 
 \small{
\begin{align}\label{feq4}
\left|\Ga_{n,p}(1-\al) \mathcal{H}_{\alpha,p}(u_k, v)+\int_{\Om}|u_k|^{p-2}u_k v 
 - \frac{s}{\e}\int_{\Om_{\e}} (u^{+}_{k})^{p} v dx - \frac{t_k}{\e} \int_{\Om_{\e}}
|u_{k}|^{p-2} u_{k}v dx \right|\leq \eta_{k}\|v\|
\end{align}}
\noi for all $v\in \mc{W}^{\al,p}$, $\eta_k\ra 0$. From \eqref{feq3}, using fractional Sobolev embedding, we obtain $\{u_k\}$ is bounded in $\mc{W}^{\al,p}$ which implies there is a subsequence denoted by $u_k$ and $u_0\in \mc{W}^{\al,p}$ such that
$u_k\rightharpoonup u_0$ weakly in $\mc{W}^{\al,p}$, and $u_{k}\ra u_0$
strongly in $L^{p}(\Om)$ for all $1\leq p< p_{\al}^*$. Substituting $v=u_k$
in \eqref{feq4}, we obtain
\[|t_k|\leq \La_{n,p}(1-\al)\int_{Q}\frac{|u_k(x)-u_k(y)|^p}{|x-y|^{n+p\al}}dxdy + \int_{\Om}|u_k|^{p} +
\frac{s}{\e}\int_{\Om_{\e}} (u^{+}_{k})^p dx + \eta_{k}\|u_k\|\leq C.\]
\noi Hence, $t_k$ is bounded sequence so has a convergent subsequence
say $t_k$ that converges to $t$. \noi Next, we claim that $u_k\ra u_0$
strongly in $\mc{W}^{\al,p}$. Since $u_k\rightharpoonup u_0$ weakly in $\mc{W}^{\al,p}$, we
get
{\small\begin{align}\label{feq7}
\int_{Q}\frac{|u_0(x)-u_0(y)|^{p-2} (u_0(x)-u_0(y))(u_k(x)-u_k(y))}{|x-y|^{n+p\al}}dxdy&\lra \int_{Q} \frac{|u_0(x)-u_0(y)|^{p}}{|x-y|^{n+p\al}}dxdy
\end{align}}
as $k\rightarrow \infty$. Also $\ld \tilde{J_{s}^{\prime}}(u_k),(u_k-u_0)\rd= o(\eta_k)$. This is equivalently,
{\small\begin{align*}
&\left|\La_{n,p}(1-\al) \int_{Q}\frac{|u_k(x)-u_k(y)|^{p-2}(u_k(x)-u_k(y))((u_k- u_0)(x)-(u_k-u_0)(y))}{|x-y|^{n+p\al}}dxdy \right|\notag\\
& \leq o(\eta_k)+\|u_{k}\|_{L^p(\Om)}^{p-1}\|u_k-u_0\|_{L^p(\Om)}+ s\|u_{k}^{+}\|_{L^p(\Om_{\e})}^{p-1} \|u_k-u_0\|_{L^p(\Om_{\e})}+|t_k|\|u_k\|_{L^p(\Om_{\e})}^{p-1}\|u_k-u_0\|_{L^p(\Om_{\e})}\\
&\quad  \lra 0\;\mbox{as}\; k\ra\infty.
\end{align*}}
Thus,
{\small\begin{align}\label{feq8}
\int_{Q}\frac{|u_k(x)-u_k(y)|^p}{|x-y|^{n+p\al}}dxdy- \int_{Q}\frac{|u_k(x)-u_k(y)|^{p-2}(u_k(x)-u_k(y))(u_0(x)-u_0(y))}{|x-y|^{n+p\al}}dxdy \ra 0,
\end{align}}
\noi as $k\rightarrow \infty.$  As we know that $|a-b|^p \leq 2^p(|a|^{p-2}a-|b|^{p-2}b)(a-b)$  for all $a, b \in \mathbb{R}$.
Therefore, from \eqref{feq7} and \eqref{feq8} we obtain
\[\int_{Q}\frac{|(u_k-u_0)(x)-(u_k-u_0)(y)|^p}{|x-y|^{n+p\al}} dxdy\ra 0 \;\text{ as }\; k \ra \infty \]
Hence,  $u_k$ converges strongly to  $u_0$ in $\mc{W}^{\al,p}$.\QED
\end{proof}

\begin{Lemma}\label{fle22}
Let $\eta_0>0$  be such that
\begin{align}\label{feq13}
\tilde{J_{s}}(u)>\tilde{J_{s}}(-\phi_{1,\e}) \end{align} for all $u\in
B(-\phi_{1,\e}, \eta_0)\cap \mc S$ with $u \not\equiv -\phi_{1,\e}$, where the ball is
taken in $\mc{W}^{\al,p}$. Then for any $0<\eta<\eta_0$,
\begin{align}\label{feq14}
\inf\{\tilde{J_{s}}(u): u\in \mc S\;\mbox{and}\;
\|u-(-\phi_{1,\e})\|=\eta\}> \tilde{J_{s}}(-\phi_{1,\e}).
\end{align}
\end{Lemma}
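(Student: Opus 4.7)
The plan is to argue by contradiction and then upgrade weak convergence to strong convergence via a Radon--Riesz argument. Suppose for some $0<\eta<\eta_0$ there is a sequence $\{v_k\}\subset\mc S$ with $\|v_k-(-\phi_{1,\e})\|=\eta$ and $\tilde{J_{s}}(v_k)\to\tilde{J_{s}}(-\phi_{1,\e})=\la_{1,\e}$. Since $\|v_k\|\le\|\phi_{1,\e}\|+\eta$, the sequence is bounded in $\mc W^{\al,p}$, so along a subsequence $v_k\rightharpoonup v^{*}$ weakly in $\mc W^{\al,p}$ and $v_k\to v^{*}$ strongly in $L^p(\Om)$ by compact embedding. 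Strong $L^p$-convergence on $\Om_\e$ forces $v^{*}\in\mc S$, and weak lower semicontinuity of the norm gives $\|v^{*}-(-\phi_{1,\e})\|\le\eta<\eta_0$; thus $v^{*}\in B(-\phi_{1,\e},\eta_0)\cap\mc S$.

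Next, since $[\,\cdot\,]_{\al,p}^p$ is weakly lower semicontinuous and the two $L^p$-integrals in $\tilde{J_{s}}$ are continuous under strong $L^p$-convergence, the functional $\tilde{J_{s}}$ is itself weakly lower semicontinuous. Hence
\[\tilde{J_{s}}(v^{*})\le\liminf_{k\to\infty}\tilde{J_{s}}(v_k)=\la_{1,\e}=\tilde{J_{s}}(-\phi_{1,\e}).\]
Hypothesis (\ref{feq13}) forces strict inequality at any point of $B(-\phi_{1,\e},\eta_0)\cap\mc S$ different from $-\phi_{1,\e}$; the only way to reconcile this with the displayed bound is $v^{*}=-\phi_{1,\e}$.

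It remains to upgrade $v_k\rightharpoonup-\phi_{1,\e}$ to strong convergence, which will contradict $\|v_k-(-\phi_{1,\e})\|=\eta>0$. Strong $L^p$-convergence gives $\|v_k\|_{L^p(\Om)}^p\to\|\phi_{1,\e}\|_{L^p(\Om)}^p$ and $\int_{\Om_\e}(v_k^{+})^p\to 0$. Inserting these into $\tilde{J_{s}}(v_k)\to\la_{1,\e}$ yields
\[\La_{n,p}(1-\al)[v_k]_{\al,p}^p\;\longrightarrow\;\La_{n,p}(1-\al)[\phi_{1,\e}]_{\al,p}^p,\]
so $\|v_k\|_{\al,p}\to\|-\phi_{1,\e}\|_{\al,p}$. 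Because $p\ge 2$, the space $\mc W^{\al,p}$ is uniformly convex, and the Radon--Riesz (Kadec--Klee) property promotes weak convergence plus convergence of norms to strong convergence $v_k\to-\phi_{1,\e}$ in $\mc W^{\al,p}$, which contradicts $\|v_k-(-\phi_{1,\e})\|=\eta>0$.

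The main obstacle is this final upgrade to strong convergence from weak convergence and convergence of $\tilde{J_{s}}$-values. It hinges on (i) splitting $\tilde{J_{s}}$ so that the compact embedding handles the two $L^p$-terms, leaving only the Gagliardo seminorm, and (ii) the uniform convexity of $\mc W^{\al,p}$, which is available precisely because $p\ge 2$; the argument would not be so straightforward for $1<p<2$.
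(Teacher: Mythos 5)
Your proof is correct but takes a genuinely different route from the paper's. The paper applies Ekeland's variational principle on the annulus $V=\{u\in\mc S:\eta-\de\le\|u-(-\phi_{1,\e})\|\le\eta+\de\}$ (with $\eta+\de<\eta_0$) to manufacture from the original sphere-constrained minimizing sequence a Palais--Smale sequence $\{v_k\}$; the $(P.S)$ condition of Lemma \ref{fle21} then produces a strong limit $v\in B(-\phi_{1,\e},\eta_0)\cap\mc S$ with $\tilde{J_{s}}(v)=\la_{1,\e}$, contradicting \eqref{feq13}. You instead exploit the specific ``weakly lower semicontinuous seminorm plus compact perturbation'' structure of $\tilde{J_{s}}$: weak lower semicontinuity forces $\tilde{J_{s}}(v^{*})\le\la_{1,\e}$, hypothesis \eqref{feq13} then pins down $v^{*}=-\phi_{1,\e}$, convergence of the $L^p$-terms upgrades convergence of the functional values to convergence of the Gagliardo seminorms and hence of $\|\cdot\|_{\al,p}$, and the Kadec--Klee property in a uniformly convex space converts weak convergence plus norm convergence into strong convergence, contradicting $\|v_k-(-\phi_{1,\e})\|=\eta>0$. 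The paper's approach is the more general template (it would apply to any $C^1$ functional satisfying $(P.S)$ on $\mc S$, with no structural decomposition needed), while yours is more elementary and self-contained, avoiding Ekeland at the cost of invoking the particular shape of $\tilde{J_{s}}$. One minor correction to your closing remark: the hypothesis $p\ge 2$ is not what makes this work. Since $\mc W^{\al,p}$ embeds isometrically into an $L^p$ space via $u\mapsto\bigl(u,\,(u(x)-u(y))|x-y|^{-(n+p\al)/p}\bigr)$, it is uniformly convex for every $1<p<\infty$ by Clarkson's inequalities, so the Radon--Riesz step would go through equally well in the range $1<p<2$.
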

\begin{proof}
If possible, let  infimum in \eqref{feq14} is equal to $\tilde{J_{s}}(-\phi_{1,\e})=\la_{1,\e}$ for some $\eta$ with $0<\eta<\eta_0$. It implies
there exists a sequence $u_k\in \mc S$ with $\|u_k-(-\phi_{1,\e})\|=\eta$ such that
\begin{align}\label{feq15}
\tilde{J_{s}}(u_k)\leq \la_{1,\e} + \frac{1}{2k^2}.
\end{align}
Consider the set $V =\{u\in\mc S: \eta-\de\leq\|u-(-\phi_{1,\e})\|\leq \eta+\de\}$, where $\de$ is choosen such
that $\eta-\de>0$ and $\eta+\de<\eta_0$. From \eqref{feq14} and given hypotheses, it follows that $\inf\{\tilde{J_{s}}(u): u\in V\}=\la_{1,\e}.$ Now for each $k$, we apply Ekeland's variational principle to the functional $\tilde{J_{s}}$ on $V$ to get the
existence of $v_k\in V$ such that
\begin{align}
\tilde{J_{s}}(v_k)&\leq \tilde{J_{s}}(u_k),\; \|v_k-u_k\| \leq \frac{1}{k}.\label{feq018}\\
\tilde{J_{s}}(v_k)&\leq \tilde{J_{s}}(u) +
\frac{1}{k}\|u-v_k\|,\;{\text{for all}}\; u\in V. \label{feq020}
\end{align}
\noi We claim that $v_k$ is a Palais-Smale sequence for $\tilde{J_{s}}$ on $\mc S$. That is, there exists $M>0$ such that $|\tilde{J_{s}}(v_k)|<M$  and
$\|\tilde{J}_{s}^{\prime}(v_k)\|_{*}\ra 0 \text{ as } k \ra \infty$. Once this is proved then by Lemma \ref{fle21}, there exists a subsequence denoted by $v_k$ of $v_k$ such that $v_k\ra v$ strongly in $\mc{W}^{\al,p}$. Clearly, $v\in \mc S$ and satisfies $\|v-(-\phi_{1,\e})\|\leq\eta+\de<\eta_0$ and $\tilde{J_{s}}(v)= \la_{1,\e}$ which contradicts \eqref{feq13}.\\

\noi Now, the boundedness of  $\tilde{J_{s}}(v_k)$  follows from $\eqref{feq15}$ and \eqref{feq018}. So, we only need to prove that
$\|\tilde{J}_{s}^{\prime}(v_k)\|_{*}\ra 0$. Let $k>\frac{1}{\de}$ and take $w\in \mc{W}^{\al,p}$ tangent to $\mc S$ at $v_k$. That is, $\ds \frac{1}{\e} \int_{\Om_{\e}} |v_k|^{p-2} v_k w dx =0.$ Then by taking  
$\displaystyle u_t:= \frac{\e^{\frac{1}{p}}(v_k+tw)}{\|v_k+tw\|_{L^p(\Om_{\e})}}$ for $t\in \mb R$, we get  
\begin{align*}
\lim_{t\ra 0}\|u_t-(-\phi_{1,\e})\|= \|v_k-(-\phi_{1,\e})\|\leq \|v_k-u_k\|+\|u_k-(-\phi_{1,\e})\|\leq \frac{1}{k}+\eta<\de+\eta,
\end{align*}
and
\begin{align*}
\lim_{t\ra 0}\|u_t-(-\phi_{1,\e})\|= \|v_k-(-\phi_{1,\e})\|\geq\|u_k-(-\phi_{1,\e})\|-\|v_k -u_k\|\geq \eta-\frac{1}{k}>\eta-\de.
\end{align*}
\noi Hence, for $t$ small enough $u_t\in  V$ and replacing $u$ by $u_t$ in \eqref{feq020}, we get
\begin{align*}
\tilde{J_{s}}(v_k)&\leq \tilde{J_{s}}(u_t)+\frac{1}{k}\|u_t-v_k\|.
\end{align*}
Let $r(t): = \e^{\frac 1p}\|v_k + tw\|_{L^p(\Om_{\e})}$, then 
\begin{align*}
\frac{J_{s}(v_k)-J_{s}(v_k+tw)}{t}&\leq \frac{J_{s}(u_t)+\frac{1}{k}\|u_t-v_k\|-J_{s}(v_k+tw)}{t}\\
&=\frac{1}{k\; t\; r(t)}\|v_k(1-r(t)+tw)\|+\frac{1}{t}\left(\frac{1}{r(t)^p}-1\right)J(v_k+tw).
\end{align*}
Now since
$\frac{d}{dt}r(t)^p\ds|_{t=0}= \frac{p}{\e} \int_{\Om_{\e}} |v_k|^{p-2} v_k w=0,$
we obtain  $\frac{r(t)^p-1}{t}\ra 0\;\mbox{as}\; t\ra 0,$ and then  $\frac{1-r(t)}{t}\ra 0\; \mbox{as}\; t\ra 0.$ Therefore, we get
\begin{align}\label{feq16}
|\ld{J_{s}^{\prime}}(v_k),w\rd|\leq \frac{1}{k}\|w\|.
\end{align}
\noi Since $w$ is arbitrary in $\mc {W}^{\al,p}$, we choose $a_k$ such that $\frac{1}{\e} \int_{\Om_{\e}} |v_k|^{p-2} v_k(w-a_k v_k) dx =0$. Replacing $w$ by $w-a_k v_k$ in \eqref{feq16}, we get $\left|\ld{J_{s}^{\prime}}(v_k),w\rd - a_k \ld{J_{s}^{\prime}}(v_k), v_k\rd\right|\leq \frac{1}{k}\|w-a_k v_k\|$. Since $\|a_k v_k\|\leq C\|w\|$, we obtain $\left|\ld{J_{s}^{\prime}}(v_k),w \rd - t_k \int_{\Om}|v_k|^{p-2}v_k w dx \right|\leq \frac{C}{k}\|w\|$, where $t_k=\ld{J_{s}^{\prime}}(v_k), v_k \rd$. Hence, $\|\tilde{J_{s}^{\prime}}(v_k)\|_*\ra 0\;\mbox{as}\; k\ra\infty,$ as we required. \QED
\end{proof}

\begin{Proposition}\label{fga}
Let $\mc{W}^{\al,p}$ be a Banach Space. Let $\eta>0$ such that
$\|\phi_{1,\e}-(-\phi_{1,\e})\|>\eta$ and
\[\inf\{\tilde{J_s}(u): u\in \mc S \;\mbox{and}\;
\|u-(-\phi_{1,\e})\|=\eta\}>\max\{\tilde{J_s}(-\phi_{1,\e}),
\tilde{J_s}(\phi_{1,\e})\}.\] Then
$\Gamma =\{\ga \in C([-1,1], \mc S): \ga(-1)=- \phi_{1,\e} \;\mbox{and}\;
\ga(1)=\phi_{1,\e}\}$
is non empty and
\begin{align}\label{feq18}
c(s)=\inf_{\ga \in \Gamma}\max_{u\in\ga[-1,1]} J_{s}(u)
\end{align}
is a critical value of $\tilde{J_{s}}$. Moreover $c(s)>\la_{1,\e}$.
\end{Proposition}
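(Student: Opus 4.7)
The plan is to deduce this proposition as a direct application of the abstract mountain--pass Proposition \ref{fpr1}, specialised with $Y=\mc{W}^{\al,p}$, $g=I$, $M=\mc S$, $f=\tilde{J_{s}}$, $u_{0}=-\phi_{1,\e}$ and $u_{1}=\phi_{1,\e}$. Three ingredients must be checked: non-emptiness of $\Ga$, the geometric separation hypothesis, and the Palais--Smale condition on $\mc S$. The last is already supplied by Lemma \ref{fle21}.

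For $\Ga\ne\emptyset$, I would exhibit an explicit continuous curve joining $-\phi_{1,\e}$ to $\phi_{1,\e}$ inside $\mc S$. Fix any $\psi\in\mc{W}^{\al,p}$ whose restriction to $\Om_{\e}$ is linearly independent of $\phi_{1,\e}|_{\Om_{\e}}$ (a smooth bump supported in a subset of $\Om_{\e}$ suffices) and set
\[
\tilde\ga(t) := t\,\phi_{1,\e} + (1-t^{2})\,\psi,\qquad t\in[-1,1].
\]
Since $(t,1-t^{2})=(0,0)$ has no solution, linear independence forces $\tilde\ga(t)|_{\Om_{\e}}\not\equiv 0$ for every $t$, so $t\mapsto\|\tilde\ga(t)\|_{L^{p}(\Om_{\e})}$ is a continuous, strictly positive function on $[-1,1]$ and is therefore bounded away from zero. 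The normalisation
\[
\ga(t) := \frac{\e^{1/p}\,\tilde\ga(t)}{\|\tilde\ga(t)\|_{L^{p}(\Om_{\e})}}
\]
then defines a continuous path in $\mc S$ with $\ga(-1)=-\phi_{1,\e}$ and $\ga(1)=\phi_{1,\e}$.

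The geometric assumption is essentially already at hand. Proposition \ref{prop3} identifies $-\phi_{1,\e}$ as a strict local minimum of $\tilde{J_{s}}$ with value $\la_{1,\e}$, while $\phi_{1,\e}$ is a global minimum with value $\la_{1,\e}-s\le\la_{1,\e}$. Lemma \ref{fle22} upgrades this to the quantitative bound
\[
\inf\bigl\{\tilde{J_{s}}(u):u\in\mc S,\ \|u+\phi_{1,\e}\|=\eta\bigr\}>\la_{1,\e}\ge\max\bigl\{\tilde{J_{s}}(\phi_{1,\e}),\tilde{J_{s}}(-\phi_{1,\e})\bigr\}
\]
for every $\eta\in(0,\eta_{0})$. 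Choosing any such $\eta$ that in addition satisfies $\eta<2\|\phi_{1,\e}\|=\|\phi_{1,\e}-(-\phi_{1,\e})\|$, Proposition \ref{fpr1} applies and yields that $c(s)$ is a critical value of $\tilde{J_{s}}$.

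To promote this to the strict inequality $c(s)>\la_{1,\e}$, let $\de>0$ denote the positive gap between the infimum above and $\la_{1,\e}$. For any $\ga\in\Ga$ the continuous function $t\mapsto\|\ga(t)+\phi_{1,\e}\|$ takes the values $0$ at $t=-1$ and $2\|\phi_{1,\e}\|>\eta$ at $t=1$, so the intermediate value theorem supplies $t^{\ast}\in(-1,1)$ with $\|\ga(t^{\ast})+\phi_{1,\e}\|=\eta$. Consequently $\max_{t\in[-1,1]}\tilde{J_{s}}(\ga(t))\ge\la_{1,\e}+\de$, and passing to the infimum over $\ga\in\Ga$ gives $c(s)\ge\la_{1,\e}+\de>\la_{1,\e}$. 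The only delicate point in this outline is the choice of $\psi$ in the path construction, since one must ensure that $\|\tilde\ga(t)\|_{L^{p}(\Om_{\e})}$ never vanishes; this reduces to the linear independence of $\psi|_{\Om_{\e}}$ and $\phi_{1,\e}|_{\Om_{\e}}$, a condition that is easily arranged.
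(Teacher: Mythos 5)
Your argument is correct and mirrors the paper's proof: both verify non-emptiness of $\Gamma$ via an explicit normalized path through a function independent of $\phi_{1,\e}$ (the paper uses the coefficient $1-|t|$ rather than your $1-t^2$, an immaterial choice), both invoke Lemma \ref{fle21} for (PS) and Lemma \ref{fle22} for the mountain-pass geometry, and both then apply Proposition \ref{fpr1}. Your intermediate-value argument for $c(s)>\la_{1,\e}$ and your care in requiring linear independence of the \emph{restrictions} to $\Om_\e$ (so the normalizing $L^p(\Om_\e)$-norm never vanishes) make explicit two points the paper leaves implicit, but the route is the same.
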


\begin{proof}
We prove that $\Ga$ is non-empty. To end this, we take $\phi\in \mc{W}^{\al,p}$
such that $\phi\not\in \mb R\phi_{1,\e}$ and consider the path
$t\phi_{1,\e}+(1-|t|)\phi$ then  $\displaystyle w=  \frac{\e^{\frac{1}{p}} (t\phi_{1,\e}+(1-|t|)\phi)}{\| t\phi_{1,\e}+(1-|t|)\phi\|_{L^p(\Om_{\e})}}$. Moreover the (P.S)
condition and the geometric assumption are satisfied by the Lemmas
\ref{fle21} and \ref{fle22}. Then by Proposition \ref{fpr1}, $c(s)$ is
a critical value of $\tilde{J_s}$. Using the definition of $c(s)$ we
have $c(s)>\max\{\tilde{J_s}(-\phi_{1,\e}),
\tilde{J_s}(\phi_{1,\e})\}=\la_{1,\e}$.\QED
\end{proof}
\noi Thus we have proved the following:
\begin{Theorem}\label{fth1}
For each $s\geq 0$, the point $(s+c(s),c(s))$, where $c(s)>\la_{1,\e}$ is
defined by the minimax formula \eqref{feq18}, then the point
$(s+c(s),c(s))$ belongs to $\sum_{p}$.
\end{Theorem}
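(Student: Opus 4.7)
The plan is to stitch together the two pieces of machinery already in place. The first piece is the Lemma (stated right before Proposition \ref{prop3}) that translates critical points of $\tilde{J_s}$ on $\mc S$ into points of $\sum_p$: a critical point $u \in \mc S$ at level $t = \tilde{J_s}(u)$ yields $(s+t, t) \in \sum_p$. The second piece is Proposition \ref{fga}, which via a mountain-pass argument between $-\phi_{1,\e}$ and $\phi_{1,\e}$ produces a critical point of $\tilde{J_s}$ at the minimax level $c(s)$ with $c(s) > \la_{1,\e}$.

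First, I would apply Proposition \ref{fga} to obtain $u_0 \in \mc S$ with $\tilde{J_s}(u_0) = c(s)$ and $\|\tilde{J_s}'(u_0)\|_* = 0$. The hypotheses of the abstract Proposition \ref{fpr1} used here are all in hand: the (P.S) condition on $\mc S$ was verified in Lemma \ref{fle21}; the ring geometry (strict local minimum of $\tilde{J_s}$ at $-\phi_{1,\e}$ beating both $\tilde{J_s}(\pm\phi_{1,\e}) = \la_{1,\e}$ on a sphere of radius $\eta < \eta_0$) comes from Proposition \ref{prop3} together with Lemma \ref{fle22}; and the non-emptiness of $\Gamma$ is witnessed by the explicit normalized path $t \mapsto \e^{1/p}(t\phi_{1,\e} + (1-|t|)\phi)/\|t\phi_{1,\e} + (1-|t|)\phi\|_{L^p(\Om_\e)}$ for any fixed $\phi \notin \mathbb{R}\phi_{1,\e}$.

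Second, I would apply the Lagrange-multiplier characterization of critical points of the restricted functional $\tilde{J_s}$ to $u_0$: there exists $t \in \mathbb R$ such that \eqref{feq2} holds for every $v \in \mc{W}^{\al,p}$. Testing \eqref{feq2} with $v = u_0$ and using the normalization $I(u_0) = 1$, together with the expression for $\tilde{J_s}(u_0)$, pins down $t = \tilde{J_s}(u_0) = c(s)$. Reading \eqref{feq2} with this value of $t$ shows that $u_0$ is a nontrivial weak solution of \eqref{eq1} with $(a,b) = (s + c(s), c(s))$, which is exactly the assertion $(s+c(s), c(s)) \in \sum_p$.

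There is no genuine obstacle: the theorem is really a packaging of the preceding development. The only subtlety worth flagging is that the critical point delivered by Proposition \ref{fpr1} is a critical point of the \emph{restriction} $\tilde{J_s}$, not of $J_s$ on all of $\mc{W}^{\al,p}$, which is precisely why a Lagrange multiplier $t$ appears and why this multiplier, forced to equal the minimax level, becomes the second coordinate of the spectral pair. The lower bound $c(s) > \la_{1,\e}$ carries over verbatim from Proposition \ref{fga}, so no additional estimate is needed.
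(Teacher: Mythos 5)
Your proposal is correct and follows exactly the same route as the paper: the paper's proof of Theorem \ref{fth1} is simply the phrase ``Thus we have proved the following,'' which packages the unnamed Lemma preceding Proposition \ref{prop3} together with Proposition \ref{fga}, precisely the two pieces you combine. Your re-derivation of the Lagrange-multiplier step is a restatement of that Lemma's proof rather than a new idea, so nothing is gained or lost.
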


\noi It is a trivial fact that $\sum_{p}$ is symmetric with respect to
diagonal. The whole curve, that we obtain using Theorem \ref{fth1}
is denoted by
\[\mc C:= \{(s+c(s),c(s)),(c(s),s+c(s)): s\geq 0\}.\]

\section{First Nontrivial Curve}
We start this section by establishing that the lines $\mb R\times\{\la_{1,\e}\}$ and $\{\la_{1,\e}\}\times \mb R$ are isolated in $\sum_{p}$. Then
we state some topological properties of the functional $\tilde{J_{s}}$ and some Lemmas. Finally, we prove that the curve $\mc C$
constructed in the previous section is the first non trivial curve in the spectrum $\sum_{p}$. As a consequence of this, we also obtain a variational characterization of the second eigenvalue $\la_{2,\e}$.
\begin{Proposition}
The lines $\mb R\times \{\la_{1,\e}\}$ and $\{\la_{1,\e}\}\times \mb R$ are isolated in $\sum_{p}$. In other words, there exists no sequence $(a_k,
b_k)\in \sum_{p}$ with $a_k>\la_{1,\e}$ and $b_k > \la_{1,\e}$ such that $(a_k, b_k)\ra (a,b)$ with $a=\la_{1,\e}$ or $b=\la_{1,\e}$.
\end{Proposition}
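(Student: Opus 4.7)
The plan is to proceed by contradiction. Assume there exists a sequence $(a_k, b_k) \in \sum_p$ with $a_k, b_k > \la_{1,\e}$ and $(a_k, b_k) \to (a, b)$ where $b = \la_{1,\e}$ (the case $a = \la_{1,\e}$ follows by the diagonal symmetry of $\sum_p$). Let $u_k \in \mc{W}^{\al,p}$ be corresponding eigenfunctions, normalized by $\frac{1}{\e}\int_{\Om_\e}|u_k|^p\,dx = 1$. Since $\la_{1,\e}$ is simple with a constant-sign eigenfunction $\phi_{1,\e}$ and $a_k, b_k > \la_{1,\e}$, each $u_k$ must change sign. Testing the equation against $u_k$ gives the uniform bound $\|u_k\|_{\al,p}^{p} \leq \max(a_k,b_k)$, so $\{u_k\}$ is bounded in $\mc{W}^{\al,p}$; passing to a subsequence, $u_k \rightharpoonup u$ weakly in $\mc{W}^{\al,p}$ and $u_k \to u$ strongly in $L^{p}(\Om)$. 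The normalization forces $u \not\equiv 0$, and passage to the limit shows that $u$ solves the limiting equation with parameters $(a, \la_{1,\e})$.

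Next I would identify the sign of $u$. If $u^- \not\equiv 0$, test the limit equation with $-u^-$; using the pointwise inequality
\[ |u(x)-u(y)|^{p-2}(u(x)-u(y))(u^-(y)-u^-(x)) \geq |u^-(x)-u^-(y)|^p, \]
valid for $p \geq 2$, one obtains
\[ \La_{n,p}(1-\al)[u^-]_{\al,p}^p + \|u^-\|_{L^p(\Om)}^p \leq \frac{\la_{1,\e}}{\e}\|u^-\|_{L^p(\Om_\e)}^p, \]
which combined with the variational characterization of $\la_{1,\e}$ must be an equality. Hence $u^-$ is a first eigenfunction and, by simplicity, $u^- = c\,\phi_{1,\e}$ with $c > 0$, forcing $u < 0$ a.e.\ in $\Om$. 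If instead $u^- \equiv 0$, then $u \geq 0$ solves a linear eigenvalue problem with eigenvalue $a$, so simplicity yields $a = \la_{1,\e}$ and $u = c\,\phi_{1,\e} > 0$ a.e.

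In either case, the strong $L^p$ convergence together with dominated convergence implies that the set in $\Om_\e$ where $u_k$ has the opposite sign of $u$ shrinks to zero measure; for instance $|\{u_k > 0\}\cap\Om_\e| \to 0$ when $u < 0$ a.e. On the other hand, testing the equation for $u_k$ with $u_k^+$ and applying the same $p$-power inequality gives
\[ \La_{n,p}(1-\al)[u_k^+]_{\al,p}^p + \|u_k^+\|_{L^p(\Om)}^p \leq \frac{a_k}{\e}\|u_k^+\|_{L^p(\Om_\e)}^p. \]
Invoking the Sobolev embedding $\mc{W}^{\al,p}\hookrightarrow L^{p_\al^*}(\Om)$, with $p_\al^* = np/(n-p\al)$, and H\"older's inequality on $\Om_\e$ produces, whenever $u_k^+\not\equiv 0$,
\[ 1 \leq C\,\frac{a_k}{\e}\,\bigl|\{u_k>0\}\cap\Om_\e\bigr|^{\,1-p/p_\al^*}. \]
Since $\{a_k\}$ is bounded, this Faber--Krahn-type bound forces a uniform positive lower measure on the positivity set, contradicting $|\{u_k>0\}\cap\Om_\e|\to 0$. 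The symmetric scenario $u > 0$ a.e.\ is handled identically by testing with $-u_k^-$ and using the boundedness of $\{b_k\}$.

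The main difficulty is the Faber--Krahn step. One must carefully verify the pointwise $p$-power kernel inequality on the whole set $Q = \R^{2n}\setminus(\Om^c)^2$ rather than only on $\Om\times\Om$, keep track of the constants in the fractional Sobolev embedding of $\mc{W}^{\al,p}$ when restricted to $\Om_\e$, and rule out the degenerate situation in which $u_k^\pm$ vanishes in $\Om_\e$ but not in $\Om$; this last possibility is in fact automatically excluded by the Faber--Krahn inequality itself, because a vanishing $L^p(\Om_\e)$ right-hand side would force $u_k^\pm \equiv 0$ in $\Om$, contradicting the fact that $u_k$ changes sign.
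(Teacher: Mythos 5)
Your proposal is correct and follows essentially the same route as the paper's proof. You assume $b=\la_{1,\e}$ and test the limit equation with $-u^-$, whereas the paper assumes $a=\la_{1,\e}$ and tests with $u^+$; these are symmetric choices. Both arguments share the identical three-step structure: boundedness and weak compactness of the normalized eigenfunctions $u_k$; identification of the strong $L^p(\Om_\e)$ limit as $\pm\phi_{1,\e}$ via the pointwise kernel inequality (requiring $p\geq 2$) and the variational characterization of $\la_{1,\e}$, which forces $|\{u_k\gtrless 0\}\cap\Om_\e|\to 0$; and a Faber--Krahn-type lower bound on the measure of the positivity (resp.\ negativity) set, obtained from testing with $u_k^+$ (resp.\ $-u_k^-$), H\"older on $\Om_\e$, and the Sobolev embedding. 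Your closing remark on the degenerate case $\|u_k^+\|_{L^p(\Om_\e)}=0$ is a good observation that the paper leaves implicit.
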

\begin{proof}
Suppose by contradiction that there exists a sequence $(a_k, b_k)\in\sum_{p}$ with \\$a_k,$ $b_k>\la_{1,\e}$ and $(a_k, b_k)\ra (a, b)$ with $a$ or
$b=\la_{1,\e}$. Let $u_k\in \mc {W}^{\al,p}$ be a solution of
{\small\begin{equation}\label{feq17}
\La_{n,p}(1-\al)(-\De)_{p}^{\al} u_k  + |u_k|^{p-2}u_k = \frac{\chi_{\Om_\e}}{\e} (a_k(u_k^{+})^{p-1} - b_k(u_k^{-})^{p-1})\;\text{in}\;
\Om,\; \mc{N}_{\al,p} u_k =0 \;\mbox{in}\; \mb R^n \setminus \overline{\Om},
\end{equation}}
 with $\frac{1}{\e}\int_{\Om_{\e}} |u_k|^pdx=1$. Multiplying by $u_k$ in \eqref{feq17} and integrate, we have
 \[\La_{n,p}(1-\al) \int_{Q}\frac{|u_{k}(x)-u_{k}(y)|^p }{ |x-y|^{n+p\al}}dxdy + \int_{\Om}|u_k|^{p}dx = \frac{a_k}{\e} \int_{\Om_{\e}}(u_k^+)^p dx - \frac{b_k}{\e}\int_{\Om_{\e}}(u_k^-)^p dx\leq a_k.\]
Thus $\{u_k\}$ is a bounded sequence in $\mc {W}^{\al,p}$. Therefore upto
a subsequence $u_k \rightharpoonup u$ weakly in $\mc {W}^{\al,p}$ and $u_k\ra u$
strongly in $L^p(\Om_{\e})$. Then taking limit $k\ra\infty$ in the weak formultion of  \eqref{feq17}, we obtain
{\small\begin{equation}\label{feq017}
\La_{n,p}(1-\al)(-\De)_{p}^{\al} u  + |u|^{p-2}u = \frac{\chi_{\Om_\e}}{\e} (\la_{1,\e} (u^{+})^{p-1} - b (u^{-})^{p-1}) \; \text{ in }\;
\Om,\; \mc{N}_{\al,p} u = 0 \;\mbox{in}\; \mb R^n \setminus \overline{\Om}.
\end{equation}}
\noi Taking $u^+$ as test function  in \eqref{feq017}  we obtain
{\small\begin{equation}\label{f02}
\La_{n,p}(1-\al) \mathcal{H}_{\al,p}(u,u^+) 
 +\int_{\Om} (u^{+})^{p}dx = \frac{\la_{1,\e}}{\e}\int_{\Om_{\e}}(u^+)^p dx.
\end{equation}}
Observe that
\begin{align}\label{03}
((u(x)-u(y))(u^{+}(x)-u^{+}(y))= 2 u^{-}(x)u^{+}(y)+(u^{+}(x)-u^{+}(y))^2
\end{align}
and
\begin{align}\label{033}
|u(x)-&u(y)|^{p-2}= (|u(x)-u(y)|^2)^{\frac{p-2}2}\notag\\
&=(|u^{+}(x)-u^{+}(y)|^2+ |u^{-}(x)-u^{-}(y)|^2 +2
u^{+}(x)u^{-}(y)+ 2u^{+}(y)u^{-}(x))^{\frac{p-2}2}\notag\\
&\geq |u^{+}(x)-u^{+}(y)|^{p-2}.
\end{align}
\noi Using \eqref{03} and \eqref{033} in \eqref{f02} and using the definition of $\la_{1,\e}$, we obtain
\begin{align*}
\frac{\la_{1,\e}}{\e}\int_{\Om_{\e}} (u^+)^p dx\leq \La_{n,p}(1-\al) \int_{Q}\frac{|u^{+}(x)-u^{+}(y)|^p}{ |x-y|^{n+p\al}}dxdy +\int_{\Om} (u^{+})^{p}dx \leq \frac{\la_{1,\e}}{\e}\int_{\Om_{\e}} (u^+)^p dx.
\end{align*}
Thus,
\[\La_{n,p}(1-\al) \int_{Q}\frac{|u^{+}(x)-u^{+}(y)|^p}{ |x-y|^{n+p\al}} dxdy +\int_{\Om} (u^{+})^{p}dx =\frac{\la_{1,\e}}{\e}\int_{\Om_{\e}} (u^+)^p dx,\] so either $u^+\equiv 0$ or $u=\phi_{1,\e}$. If $u^+\equiv 0$ then $u\leq 0$ and $\eqref{feq017}$ implies that $u$ is an eigenfunction with $u\leq 0$ so that $u=-\phi_{1,\e}$. So, in any case $u_k$ converges to either $\phi_{1,\e}$ or $-\phi_{1,\e}$ in $L^{p}(\Om_{\e})$. Thus
\begin{align}\label{feq019}
\mbox{either} \;|\{x\in \Om_{\e} : u_k(x)<0\}|\ra 0\; \mbox{ or}\;|\{x\in\Om_{\e} :u_k(x)> 0\}|\ra 0 \quad \text{as } k\ra \infty .
\end{align}
\noi On the other hand, taking $u_k^+$ as test function in \eqref{feq17}, we obtain

\begin{align}\label{feq016}
\La_{n,p}(1-\al) \mathcal{H}_{\al,p}(u_k,u_{k}^{+}) 
+\int_{\Om} |u_{k}|^{p-2}u_{k}u_{k}^+ 
 = \frac{a_k}{\e}\int_{\Om_{\e}}(u_{k}^+)^p.
\end{align}
\noi Using H\"{o}lders inequality, fractional Sobolev embeddings and \eqref{feq016}, we obtain
{\small\begin{align*}
\La_{n,p}&(1-\al) \int_{Q}\frac{|u^{+}_k(x)-u^{+}_k(y)|^p}{|x-y|^{n+p\al}}dxdy +\int_{\Om} (u_{k}^{+})^{p}dx\\
&\leq\La_{n,p}(1-\al) \int_{Q}\frac{|u_{k}(x)-u_{k}(y)|^{p-2}(u_{k}(x)-u_{k}(y))(u^{+}_k(x)-u^{+}_k(y))}{|x-y|^{n+p\al}}dxdy +\int_{\Om} |u_{k}|^{p-2}u_{k}u_{k}^+dx \\
&=\La_{n,p}(1-\al) \mathcal{H}_{\al,p}(u_k, u_{k}^{+})+\int_\Om |u_k|^{p-2} u_k u_{k}^{+}dx\\
&= \frac{a_k}{\e}\int_{\Om_{\e}}(u_{k}^+)^p dx\\
&\leq \frac{a_k}{\e} C |\{x\in \Om_{\e} : u_k(x)>0\}|^{1-\frac{p}{q}}\|u_k^+\|^{p}
\end{align*}}
 with a constant $C>0$, $p<q\leq p^*=\frac{np}{n-p\al}$. Then we have
\[ |\{x\in \Om : u_k(x)>0\}|^{1-\frac{p}{q}} \geq \e a_k ^{-1}C^{-1}\min\{\La_{n,p}(1-\al),1\}.\]
Similarly, one can show that
\[|\{x\in \Om : u_k(x)<0\}|^{1-\frac{p}{q}}\geq \e b_k ^{-1}C^{-1}\min\{\La_{n,p}(1-\al),1\}.\]
Since $(a_k,b_k)$ does not belong to the trivial lines $\la_{1,\e}\times\mb R$ and $\mb
R\times \la_{1,\e}$ of $\sum_{p}$,  by  \eqref{feq17} we conclude that $u_k$ changes sign. Hence, from the above inequalities, we get a
contradiction with \eqref{feq019}. Therefore, the trivial lines $\la_{1,\e}\times \mb R$ and $\mb R\times \la_{1,\e}$ are isolated in $\sum_{p}$.\QED
\begin{Lemma}\label{fle1}\cite{cfg}
Let $\mc S= \{u\in \mc {W}^{\al,p} : \frac{1}{\e}\int_{\Om_{\e}}|u|^p dx =1\}$ then
\begin{enumerate}
\item $\mc S$ is locally arcwise connected.
\item Any open connected subset $\mc O$ of $\mc S$ is arcwise connected.
\item If $\mc O^{'}$ is any connected component of an open set $\mc O\subset \mc S$, then $\partial \mc O^{\prime}\cap \mc O=\emptyset$.
\end{enumerate}
\end{Lemma}
\begin{Lemma}\label{fle01}
Let $\mc O=\{u\in \mc S : \tilde{J_s}(u)<r\}$, then any connected component of $\mc O$ contains a critical point of $\tilde{J_s}$.
\end{Lemma}
\begin{proof}
Let $\mc O_1$ be any connected component of $\mc O$, let $d=\inf\{\tilde{J_{s}}(u): u\in  \overline{\mc O}_1\}$, where
$\overline{\mc O}_1$ denotes the closure of $\mc O_1$ in $\mc W^{\al,p}$. We show that there exists $u_0\in \mc W^{\al,p}$ such that
$\tilde{J_{s}}(u_0)=d$. For this let $u_k\in \mc O_1$ be a minimizing sequence such that $\tilde{J_{s}}(u_k)\leq d+\frac{1}{2k^2}$. For each $k$, by applying Ekeland's Variational principle, we get a sequence $v_k\in \overline{\mc O}_1$ such that
\begin{align*}
\tilde{J_{s}}(v_k)&\leq \tilde{J_{s}}(u_k),\; \|v_k-u_k\| \leq \frac{1}{k},\;\mbox{and}\;\tilde{J_{s}}(v_k)\leq \tilde{J_{s}}(v) + \frac{1}{k}\|v-v_k\|\;\text{for all}\;  v\in \overline{\mc O}_1.
\end{align*}
For $k$ large enough, we have
\[\tilde{J_{s}}(v_k)\leq \tilde{J_{s}}(u_k)\leq d+ \frac{1}{2k^2}<r,\]
then $v_k\in \mc O$. By Lemma \ref{fle1}, we get $v_k\not\in \partial\mc O_1$ so $v_k\in \mc O_1$. On the other hand, for $t$ small enough and $w$ such that
$\frac{1}{\e}\int_{\Om_{\e}}  |v_k|^{p-2}v_k w dx=0$, we have
$\displaystyle u_t:=\frac{\e^{\frac{1}{p}}(v_k+tw)}{\|v_k+tw\|_{L^p{(\Om_{\e})}}}\in \overline{\mc O}_1.$ Then $\tilde{J_{s}}(v_k)\leq \tilde{J_{s}}(u_t) + \frac{1}{k}\|u_t-v_k\|.$
 Following the same calculation as in Lemma \ref{fle22}, we have that $v_k$ is a Palais-Smale sequence for
$\tilde{J_{s}}$ on $\mc S$ i.e $\tilde{J_{s}}(v_k)$ is bounded and $\|\tilde{J_{s}}(v_k)\|_{*}\ra 0$. Again by Lemma \ref{fle21}, up to
a subsequence $v_k\ra u_0$ strongly in $\mc W^{\al,p}$ and hence $\tilde{J_{s}}(u_0)=d<r$ and moreover $u_0\in \mc O$. By part $3$ of
Lemma \ref{fle1}, $u_0\not\in \partial \mc O_1$ so $u_0\in \mc O_1$. Hence $u_0$ is a critical point of $\tilde{J_{s}}$, which completes
the proof.\QED
\end{proof}
Before proving the main Theorem \ref{f31}, we state some Lemmas and the details of the proof can be found in \cite{second} and \cite{pal}.
\begin{Lemma}\label{f2}\cite[Lemma B.1]{second}
Let $1 \leq p \leq \infty$ and $U,V \in \mb R$ such that $U.V \leq 0$. Define the following function
\[g(t)=|U -tV|^p+|U-V|^{p-2}(U-V)V|t|^p,\; t \in \mb R.\]
Then we have \[g(t)\leq g(1)=|U-V|^{p-2}(U-V)U,\; t \in \mb R.\]
\end{Lemma}
\begin{Lemma}\label{f3}\cite[Lemma 4.1]{pal}
Let $\al\in (0,1)$ and $p>1$. For any non-negative functions $u$, $v \in \mc W^{\al,p}$, consider the function $\sigma_t:= \left[(1-t)v^p(x)+ tu^p(x)\right]^{\frac{1}{p}}$ for all $t\in[0, 1]$. Then
\begin{equation*}
[\sigma_t]_{\al,p} \leq (1-t)[v]_{\al,p} + t[u]_{\al,p},\;\mbox{for all}\; t \in [0,1], \;\text{where}\; [u]_{\al,p}\; \text{is defined in}\; \eqref{feq12}.
\end{equation*}
\end{Lemma}

\noi {\bf Proof of Theorem \ref{f31}:} Assume by contradiction that there exists $\mu$ such that $\la_{1,\e}<\mu<c(s)$ and $(s+\mu,\mu)\in \sum_{p}$. Using the fact that $\{\la_{1,\e}\}\times \mb R$ and $\mb R\times \{\la_{1,\e}\}$ are isolated in $\sum_{p}$ and $\sum_{p}$ is closed we can choose such a point with $\mu$ minimum. In other words, $\tilde{J_{s}}$ has a critical value $\mu$ with $\la_{1,\e}<\mu<c(s)$, but there is no critical value in
$(\la_{1,\e},\mu)$. If we construct a path connecting from $\phi_{1,\e}$ to $-\phi_{1,\e}$ such that $\tilde{J_s}\leq \mu$, then we get a
contradiction with the definition of $c(s)$, which will complete the proof.

\noi Let $u\in \mc S$ be a critical point of $\tilde{J_s}$ at level
$\mu$. Then $u$ satisfies,
\begin{align}\label{feq20}
&\La_{n,p}(1-\al) \int_{Q} \frac{|u(x)-u(y)|^{p-2}(u(x)-u(y))(v(x)-v(y))}{ |x-y|^{n+p\al}} dxdy+\int_{\Om}|u|^{p-2}uvdx \notag\\
&\qquad = \frac{(s+\mu)}{\e}\int_{\Om_{\e}}(u^{+})^{p-1} vdx -\frac{\mu}{\e} \int_{\Om_{\e}}(u^{-})^{p-1}vdx
\end{align}
\noi for all $v\in \mc {W}^{\al,p}$. Substituting $v=u^{+}$ in \eqref{feq20}, we have
{\small\begin{equation}\label{feq21}
\La_{n,p}(1-\al) \int_{Q} \frac{|u(x)-u(y)|^{p-2}(u(x)-u(y))(u^+(x)-u^+(y))}{ |x-y|^{n+p\al}} dxdy+\int_{\Om}(u^+)^{p}dx = \frac{(s+\mu)}{\e}\int_{\Om_{\e}}(u^{+})^p dx.
\end{equation}}
Since, $|u^+(x)-u^{+}(y)|^p \leq|u(x)-u(y)|^{p-2}(u(x)-u(y))(u^+(x)-u^+(y)$, we obtain
\[ \La_{n,p}(1-\al)\int_{Q} \frac{|u^+(x)-u^{+}(y)|^p}{ |x-y|^{n+p\al}} dxdy+\int_{\Om}(u^+)^pdx-\frac{s}{\e}\int_{\Om_{\e}}(u^{+})^pdx \leq \mu.\]
\noi Again substituting  $v=u^-$ in \eqref{feq20}, we have
{\small\begin{equation}\label{feq22}
\La_{n,p}(1-\al) \int_{Q} \frac{|u(x)-u(y)|^{p-2}(u(x)-u(y))(u^-(x)-u^-(y))}{ |x-y|^{n+p\al}} dxdy-\int_{\Om}(u^-)^{p}dx = -\frac{\mu}{\e}\int_{\Om_{\e}} (u^{-})^p dx.
\end{equation}}
Therefore,
{\small\begin{equation*}
\La_{n,p}(1-\al) \int_{Q} \frac{|u(x)-u(y)|^{p-2}((u^-(x)-u^-(y))^2+2u^+(x)u^-(y))}{ |x-y|^{n+p\al}} dxdy+\int_{\Om}(u^-)^{p}dx = \frac{\mu}{\e}\int_{\Om_{\e}}(u^{-})^p dx
\end{equation*}}
Since, we know that $|u^-(x)-u^-(y)|^{p}\leq |u(x)-u(y)|^{p-2}\left[(u^-(x)-u^-(y))^2+2u^+(x)u^-(y)\right].$
It implies
\[ \La_{n,p}(1-\al)\int_{Q} \frac{|u^-(x)-u^{-}(y)|^p}{ |x-y|^{n+p\al}} dxdy+\int_{\Om}|u^-|^pdx\leq \mu.\]
 Therefore, from all above relations, one can easily verify that
\[\tilde{J_{s}}(u)=\mu,\; \tilde{J_{s}}\left(\frac{\e^\frac{1}{p} u^+}{ \|u^+\|_{L^p(\Om_{\e})}}\right)
\leq \mu,\tilde{J_{s}}\left(\frac{\e^\frac{1}{p}u^-}{\|u^-\|_{L^p(\Om_{\e})}}\right)\leq \mu -s,\tilde{J_{s}}\left( \frac{-\e^\frac{1}{p}u^-}{\|u^-\|_{L^p(\Om_{\e})}}\right)\leq \mu .\]
Since, $u$ changes sign (see Proposition \ref{prop3}), the following paths are well-defined on $\mc S$:
\[u_{1}(t)=\frac{u^+- (1-t)u^-}{\e^\frac{-1}{p} \|u^+- (1-t)u^-\|_{L^p(\Om_{\e})}},\quad u_2(t)=\frac{[(1-t)(u^{+})^p+ t(u^{-})^p]^{1/p}}{\e^\frac{-1}{p} \|(1-t)(u^{+})^p+ t(u^{-})^p\|_{L^p(\Om_{\e})}}\]
\[u_3(t)=\frac{(1-t)u^{+}-u^{-}}{\e^\frac{-1}{p} \|(1-t)u^{+}-u^{-}\|_{L^p(\Om_{\e})}}.\]
Then, using the above calculations and Lemma \ref{f2} for $U=u^+(x)-u^+(y)$ and\\ $V=u^-(x)-u^-(y)$, one can easily obtain that for all $t\in[0,1]$,
\begin{align*}
\tilde{J_{s}}(u_1(t)) &\leq \frac{\La_{n,p}(1-\al)\displaystyle\int_{Q}\frac{|U-V|^{p-2}(U-V)U}{|x-y|^{n+p\al}}+\int_{\Om}(u^+)^p-\frac{s}{\e}\int_{\Om_{\e}}(u^+)^p }{\e^{-1} \|u^+- (1-t)u^-\|_{L^p(\Om_{\e})}^p}\\
&\quad +\frac{|1-t|^p\left[-\La_{n,p}(1-\al)\displaystyle\int_{Q}\frac{|U-V|^{p-2}(U-V)V}{|x-y|^{n+p\al}}+\int_{\Om}(u^-)^p\right]}{\e^{-1}\|u^+- (1-t)u^-\|_{L^p(\Om_{\e})}^p}\\
&=\mu,
\end{align*}
\noi by using \eqref{feq21} and \eqref{feq22}.
Now using Lemma \ref{f3} we have,
\begin{equation*}
\begin{split}
\tilde{J_{s}}(u_2(t)) &\leq \frac{(1-t)\left[\La_{n,p}(1-\al)\displaystyle\int_{Q}\frac{|u^+(x)-u^+(y)|^p}{|x-y|^{n+p\al}}+\int_{\Om}(u^+)^p -\frac{s}{\e}\int_{\Om_{\e}}(u^+)^p\right] }{\e^{-1} \|(1-t)(u^{+})^p+ t(u^{-})^p\|^p_{L^p(\Om_{\e})}}\\
& \quad +\frac{t \left[\La_{n,p}(1-\al)\displaystyle\int_{Q}\frac{|u^-(x)-u^-(y)|^p}{|x-y|^{n+p\al}}+\int_{\Om}(u^-)^p- \frac{s}{\e} \int_{\Om_{\e}}(u^-)^p\right]}{\e^{-1} \|(1-t)(u^{+})^p+ t(u^{-})^p\|^p_{L^p(\Om_{\e})}}\\
& \leq \mu - \frac{s t \int_{\Om_{\e}}(u^-)^p}{\e^{-1} \|(1-t)(u^{+})^p+ t(u^{-})^p\|^p_{L^p(\Om_{\e})}}\\& \leq \mu.
\end{split}
\end{equation*}
Again, by Lemma \ref{f2}, for $U=u^-(y)-u^-(x)$ and $V=u^+(y)-u^+(x)$, we obtain
{\begin{align*}
\tilde{J_{s}}(u_3(t)) &\leq \frac{\La_{n,p}(1-\al)\displaystyle\int_{Q}\frac{|U-V|^{p-2}(U-V)U}{|x-y|^{n+p\al}}+\int_{\Om}(u^-)^p  }{\e^{-1} \|(1-t)u^{+}-u^{-}\|_{L^p(\Om_{\e})}^p}\\&\quad + \frac{ |1-t|^p \left[ -\La_{n,p}(1-\al) \displaystyle \int_{Q}\frac{|U-V|^{p-2}(U-V)V}{|x-y|^{n+p\al}}+ \int_{\Om}(u^+)^p-\frac{s}{\e}\int_{\Om_{\e}}(u^+)^p \right]}{\e^{-1}\|(1-t)u^{+}-u^{-}\|_{L^p(\Om_{\e})}^p}\\
&=\mu,\; \mbox{by using}\; \eqref{feq21}\;\mbox{and}\; \eqref{feq22}.
\end{align*}}
\noi Let $\mc O = \{v\in\mc S: \tilde{J_s}(v)<\mu-s\}$. Then clearly
$\phi_{1,\e}\in \mc O$, while $-\phi_{1,\e}\in \mc O$ if $\mu- s>\la_{1,\e}$.
Moreover $\phi_{1,\e}$ and $-\phi_{1,\e}$ are the only possible critical
points of $\tilde{J_s}$ in $\mc O$ because of the choice of $\mu$. We note that
$\tilde{J_s}\left(\frac{\e^{\frac{1}{p}} u^-}{\|u^-\|_{L^p(\Om_{\e})}}\right)\leq \mu-s$,
$\frac{\e^{\frac{1}{p}} u^-}{\|u^-\|_{L^p(\Om_{\e})}} $ does not change sign and vanishes on a
set of positive measure, it is not a critical point of
$\tilde{J_s}$. Therefore, there exists a $C^1$ path $\eta:[-\de,\de]\ra
\mc S$ with $\eta(0)= \frac{\e^{\frac{1}{p}}u^-}{\|u^-\|_{L^p(\Om_{\e})}}$ and
$\frac{d}{dt}\tilde{J_s}(\eta(t))|_{t=0}\ne 0$. Using this path we
can move from $\frac{\e^{\frac{1}{p}} u^-}{\|u^-\|_{L^p(\Om_{\e})}}$ to a point $v$ with
$\tilde{J_s}(v)<\mu-s$. Taking a connected component of $\mc O$
containing $v$ and applying Lemma \ref{fle01} we have that either
$\phi_{1,\e}$ or $-\phi_{1,\e}$ is in this component. Let us assume that it is
$\phi_{1,\e}$. So we continue by a path $u_{4}(t)$ from
$\frac{\e^{\frac{1}{p}} u^-}{\|u^-\|_{L^p(\Om_{\e})}}$ to $\phi_{1,\e}$ which is at level
less than $\mu$. Then the path $-u_{4}(t)$ connects
$\frac{-\e^{\frac{1}{p}} u^-}{\|u^-\|_{L^p(\Om_{\e})}}$ to $-\phi_{1,\e}$. We observe
that $|\tilde{J_s}(u)- \tilde{J_s}(-u)|\leq s.$ Then it follows that
\[\tilde{J_s}(-u_4(t))\leq \tilde{J_s}(u_4(t))+s\leq \mu-s+s= \mu \;\mbox{for all}\; t.\]
Connecting $u_1(t)$, $u_2(t)$ and $u_4(t)$, we get a path from $u$
to $\phi_{1,\e}$ and joining $u_3(t)$ and $-u_4(t)$ we get a path from
$u$ to $-\phi_{1,\e}$. These yields a path $\ga(t)$ on $\mc S$ joining
from $\phi_{1,\e}$ to $-\phi_{1,\e}$ such that $\tilde{J_s}(\ga(t))\leq \mu$
for all $t$, which concludes the proof.\QED
\end{proof}
As a consequence of Theorem 1.1, we give a variational characterization of the second value of \eqref{eq2}.
\begin{Corollary}
The second eigenvalue $\la_2$ of \eqref{eq2} has the variational characterization given by
\begin{align*}
  \la_2 := & \inf_{\ga \in \Gamma}\sup_{u\in \ga}\left(\La_{n,p}(1-\al)\int_{Q}\frac{|u(x)-u(y)|^p}{|x-y|^{n+p\al}} dxdy +\int_{\Om}|u|^p dx\right),
\end{align*}
where $\Ga$ is as in Proposition \ref{fga}.
\end{Corollary}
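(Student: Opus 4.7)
The corollary is obtained as a direct specialization of Theorem~\ref{f31} to the choice $s=0$, combined with the elementary identification of diagonal Fu\v{c}ik points with standard eigenvalues of \eqref{eq2}. No new analytic ingredient is needed beyond what has already been established.

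The first step is to note that setting $s=0$ in the definition of $J_s$ kills the $(s/\e)\int_{\Om_\e}(u^+)^p$ term, so that
$$J_0(u) = \La_{n,p}(1-\al)\int_Q \frac{|u(x)-u(y)|^p}{|x-y|^{n+p\al}}\,dx\,dy + \int_\Om |u|^p\,dx,$$
which is exactly the quantity appearing inside the supremum in the statement. Moreover, a Fu\v{c}ik point $(a,b)$ with $a=b=\la$ lies in $\sum_p$ if and only if $\la$ is an eigenvalue of \eqref{eq2}, since in this case the Fu\v{c}ik equation collapses to the standard eigenvalue equation. Therefore the intersection $\sum_p \cap \{a=b\}$ is precisely the eigenvalue sequence $\{\la_{n,\e}\}_{n\ge 1}$.

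Applying Theorem~\ref{f31} with $s=0$, the point $(c(0),c(0))$ is the first nontrivial point of $\sum_p$ on the line $(0,0)+t(1,1)$, i.e.\ the smallest value strictly larger than $\la_{1,\e}$ which is an eigenvalue of \eqref{eq2}. Since $\la_{1,\e}$ is simple and isolated, this smallest value is by definition $\la_{2,\e}$, so $c(0)=\la_{2,\e}=\la_2$.

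Finally, Proposition~\ref{fga} (with $s=0$) provides the minimax formula
$$c(0) = \inf_{\ga \in \Ga} \max_{u \in \ga[-1,1]} J_0(u),$$
where $\Ga$ is the set of continuous paths in $\mc S$ joining $-\phi_{1,\e}$ to $\phi_{1,\e}$. Substituting $c(0)=\la_2$ and the explicit expression for $J_0$ above yields the desired variational characterization. The only conceptual step, and the one worth spelling out explicitly in a clean write-up, is the identification of diagonal Fu\v{c}ik points with eigenvalues of \eqref{eq2}; everything else is a transcription of results already proved.
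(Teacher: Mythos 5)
Your proposal is correct and follows exactly the paper's approach: set $s=0$ in Theorem~\ref{f31} to conclude $c(0)=\la_2$, then substitute into the minimax formula \eqref{feq18} from Proposition~\ref{fga}, noting $J_0$ is precisely the functional in the statement. The extra detail you supply --- that diagonal points of $\sum_p$ are exactly eigenvalues of \eqref{eq2}, that $\la_{1,\e}$ is simple and isolated so the first nontrivial diagonal point is $(\la_{2,\e},\la_{2,\e})$ --- is left implicit in the paper's one-line proof but is the right justification.
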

\begin{proof}
Taking $s=0$ in Theorem \ref{f31}, we have $c(0)=\la_2$ and \eqref{feq18} concludes the proof.\QED
\end{proof}

\section{Properties of the curve $\mathcal{C}$}
In this section, we prove that the curve $\mc C$ is Lipschitz
continuous, has a certain asymptotic behavior and is strictly
decreasing.

\noi For $A \subset \Om_{\e}$, define the eigenvalue problem
\begin{equation}\label{feq51}
 \La_{n,p}(1-\al)(-\De)^{\al}_p u +|u|^{p-2}u = \frac{\chi_{A}}{\e}(\la |u|^{p-2}u) \; \text{in}\;
\Om, \quad \mc{N}_{\al,2} u = 0 \; \quad  \mbox{in}\; \mb R^n \setminus \overline{\Om},
\end{equation}
Let $\la_{1,\e}(A)$ denotes the first eigenvalue of \eqref{feq51}, then
\[\la_{1,\e}(A) = \inf_{u\in \mc W^{\al,p}}\left\{
\La_{n,p}(1-\al)\int_{Q}\frac{|u(x)-u(y)|^p}{|x-y|^{n+p\al}}dxdy + \int_{\Om} |u|^p dx : \int_{A}|u|^p dx= \e \right\}.\]
\begin{Lemma}\label{f111}
Let $A$, $B$ be two bounded open sets in $\Om_{\e}$, with
$A\subsetneq B$ and $B$ is connected then $\la_{1,\e}(A)>\la_{1,\e}(B)$.
\end{Lemma}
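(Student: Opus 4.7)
The plan is to combine the Rayleigh-quotient characterization of $\la_{1,\e}(D)$ with the strict positivity of first eigenfunctions of the nonlocal problem. For an open $D\subset\Om_\e$, $p$-homogeneity rewrites the variational definition of $\la_{1,\e}(D)$ as
\[
\la_{1,\e}(D)=\inf_{\substack{u\in\mc W^{\al,p}\\ u\not\equiv 0\text{ on }D}} R_D(u),\qquad R_D(u):=\frac{\La_{n,p}(1-\al)[u]_{\al,p}^{p}+\|u\|_{L^p(\Om)}^{p}}{\e^{-1}\int_D|u|^p\,dx}.
\]
Since $A\subsetneq B$ implies $\int_A|u|^p\le\int_B|u|^p$ for every $u$, one has $R_A\ge R_B$ on admissible functions, and passing to the infimum yields the non-strict comparison $\la_{1,\e}(A)\ge\la_{1,\e}(B)$.

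To upgrade this to strict inequality I would argue by contradiction. Suppose $\la_{1,\e}(A)=\la_{1,\e}(B)=:\la$ and let $u_A$ attain the infimum for $A$, normalized by $\int_A|u_A|^p=\e$; since $[|u|]_{\al,p}\le[u]_{\al,p}$, after replacing $u_A$ by $|u_A|$ we may assume $u_A\ge 0$. The chain
\[
\la=R_A(u_A)\ge R_B(u_A)\ge\la_{1,\e}(B)=\la
\]
forces $R_A(u_A)=R_B(u_A)$, whence $\int_{B\setminus A}|u_A|^p=0$; thus $u_A\equiv 0$ a.e.\ on the open set $U:=B\setminus\overline A$. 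Moreover $U\ne\emptyset$: otherwise $B\subset\overline A$ and $B=(B\cap A)\cup(B\cap\partial A)$ is a disjoint union of two relatively clopen subsets of $B$, so connectedness of $B$ together with $A\ne\emptyset$ forces $A=B$, contradicting $A\subsetneq B$. Hence $u_A$ vanishes on a nonempty, and thus positive-measure, open subset of $\Om$.

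The same chain of equalities also makes $u_A$ a first eigenfunction for the weight-$B$ problem \eqref{feq51}, so it is a nontrivial nonnegative weak solution of a fractional $p$-Laplace eigenvalue equation on the connected set $\Om$. Applying the strong maximum principle for $(-\De)_p^\al$ (cf.\ \cite{pe, ros}) yields $u_A>0$ a.e.\ in $\Om$, contradicting the vanishing on $U$ and completing the proof. The delicate step is precisely this appeal to a strong maximum principle for the nonlinear nonlocal operator with the $\mc N_{\al,p}$ boundary condition; if a ready reference is not at hand, one can argue directly from the weak formulation by testing against a cutoff $\varphi\in C_c^\infty(U)$ (for which the local and weighted terms drop because $\varphi=0$ on $A$ and $u_A=0$ on $U$) and exploiting the sign of the resulting $\mc H_{\al,p}(u_A,\varphi)$ to propagate the vanishing of $u_A$ from $U$ to all of $\R^n$.
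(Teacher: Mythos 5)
Your proof follows essentially the same strategy as the paper's: from the Rayleigh-quotient characterization one gets the non-strict inequality $\la_{1,\e}(A)\geq\la_{1,\e}(B)$, and then, assuming equality, one shows that an $A$-minimizer must vanish a.e.\ on $B\setminus A$ and simultaneously be a $B$-minimizer, which is then ruled out by strict positivity of the first eigenfunction. You make the chain $R_A(u_A)\geq R_B(u_A)\geq\la_{1,\e}(B)$ explicit and correctly isolate the strong maximum principle for the nonlocal operator as the ingredient that closes the argument; the paper only alludes to this, asserting that the vanishing is ``impossible since $B$ is connected'' without naming the underlying positivity/unique-continuation result.

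There is, however, a genuine flaw in the topological step where you argue $U=B\setminus\overline A\ne\emptyset$: the set $B\cap\partial A$ is relatively \emph{closed} in $B$, not relatively open, so $(B\cap A)\cup(B\cap\partial A)$ is not in general a clopen partition of $B$, and connectedness of $B$ does not force $A=B$. A concrete counterexample is $A=(0,1)\cup(1,2)$ and $B=(0,2)$ (with the obvious higher-dimensional analogues): both are open, $A\subsetneq B$, $B$ is connected, yet $B\setminus\overline A=\emptyset$. Note that in this example $|B\setminus A|=0$, so the two Rayleigh quotients coincide for every test function, $\la_{1,\e}(A)=\la_{1,\e}(B)$, and the lemma itself degenerates. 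What both your proof and the paper's tacitly require is $|B\setminus A|>0$; this cannot be derived from the stated hypotheses, so it should be added as an assumption (or checked in the specific applications in Section~6, where it does hold). With that assumption in place, your argument that $u_A$ vanishes on a positive-measure subset of $\Om$ while being a nonnegative first eigenfunction for the $B$-problem, and hence strictly positive by the strong maximum principle, is sound.
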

\begin{proof}
Clearly from the definition of $\la_{1,\e}$, we have $\la_{1,\e}(A)\geq\la_{1,\e}(B)$. Let if possible equality
holds and let $\phi_{1,\e}$ be a non-negative normalized eigenfunction
associated to $\la_{1,\e}(A)$ such that $\phi_{1,\e}$ is equal to zero outside
$A$. Therefore, from the definition of $\la_{1,\e}(A)$, we have
\[\La_{n,p}(1-\al)\int_{Q} \frac{|\phi_{1,\e}(x)-\phi_{1,\e}(y)|^p}{|x-y|^{n+p\al}}dxdy+\int_{\Om}|\phi_{1,\e}|^pdx = \frac{\la_1(A)}{\e}\int_{A} \phi_{1,\e}^p dx = \frac{\la_1(B)}{\e}\int_{B} \phi_{1,\e}^p dx.\]
It implies $\phi_{1,\e}$ is an eigenfunction associated to $\la_{1,\e}(B)$. But
this is impossible since $B$ is connected and $\phi_{1,\e}$ vanishes on
$B\setminus A \ne\emptyset$.\QED
\end{proof}
\begin{Proposition}
The curve $s\ra (s+c(s), c(s))$, $s\in \mb R^+$ is Lipschitz continuous and strictly decreasing $($in the sense that $s_1<s_2$ implies $s_1+c(s_1)<s_2+c(s_2)$ and	$c(s_1)>c(s_2))$.
\end{Proposition}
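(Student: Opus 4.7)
The plan is to deduce Lipschitz continuity and non-strict monotonicity from one elementary identity, then upgrade each monotonicity to strict using the path construction from the proof of Theorem~\ref{f31}. For any $u\in\mc S$ and $s_1,s_2\geq 0$,
\[
\tilde J_{s_1}(u)-\tilde J_{s_2}(u)=\frac{s_2-s_1}{\e}\int_{\Om_\e}(u^+)^p\,dx,
\]
and since $\tfrac{1}{\e}\int_{\Om_\e}(u^+)^p\in[0,1]$ on $\mc S$, one has $|\tilde J_{s_1}-\tilde J_{s_2}|\leq|s_1-s_2|$ uniformly. As the path family $\Ga$ in \eqref{feq18} is independent of $s$, minimaxing gives $|c(s_1)-c(s_2)|\leq|s_1-s_2|$, so $c$ is $1$-Lipschitz and $s\mapsto s+c(s)$ is $2$-Lipschitz. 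The same identity yields $\tilde J_{s_1}\geq\tilde J_{s_2}$ on $\mc S$ whenever $s_1\leq s_2$, hence $c(s_1)\geq c(s_2)$, and combined with the Lipschitz bound, $s_1+c(s_1)\leq s_2+c(s_2)$.

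For the strict decrease $c(s_1)>c(s_2)$ I would argue by contradiction: assume $s_1<s_2$ and $c(s_1)=c(s_2)=:c$. Proposition~\ref{fga} supplies a critical point $u\in\mc S$ of $\tilde J_{s_1}$ at the level $c>\la_{1,\e}$, and Theorem~\ref{f31} forces $u$ to change sign (otherwise $u=\pm\phi_{1,\e}$ would force $c=\la_{1,\e}$). Running the path construction in the proof of Theorem~\ref{f31} with $s=s_1$ and $\mu=c$ produces a continuous path $\ga:[-1,1]\to\mc S$ from $\phi_{1,\e}$ to $-\phi_{1,\e}$ through $u$, assembled from $u_1,u_2,u_3$ and $\pm u_4$, along which $\tilde J_{s_1}(\ga(\cdot))\leq c$. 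Combining with the identity, $\tilde J_{s_2}\leq\tilde J_{s_1}\leq c$ along $\ga$, and one upgrades to strict inequality piecewise: where $\ga$ has non-trivial positive part (on $u_1$, $u_2$, $u_3|_{t\in[0,1)}$ and along $u_4$), the strict drop $(s_2-s_1)\e^{-1}\int(\ga^+)^p>0$ forces $\tilde J_{s_2}<c$; at the non-positive points ($u_3(1)$ and along $-u_4$), where $\tilde J_{s_1}(\ga)=\tilde J_{s_2}(\ga)$, strict inequality $\tilde J_{s_1}(\ga)<c$ comes from the strict form of the kernel inequalities \eqref{03}--\eqref{033}, which hold because the sign-changing $u$ has overlapping positive and negative supports in the non-local sense. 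Hence $\max_\ga \tilde J_{s_2}<c=c(s_2)$ contradicts the minimax characterization.

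For the strict increase $s_1+c(s_1)<s_2+c(s_2)$, equivalently $c(s_1)-c(s_2)<s_2-s_1$, my plan is an analogous contradiction: assume saturation $c(s_1)-c(s_2)=s_2-s_1$ and take a sign-changing critical point $u\in\mc S$ of $\tilde J_{s_2}$ at level $c(s_2)$. Running the same path construction with $s=s_2$, $\mu=c(s_2)$ yields $\ga$ with $\tilde J_{s_2}(\ga(\cdot))\leq c(s_2)$, and the identity gives
\[
\tilde J_{s_1}(\ga(t))=\tilde J_{s_2}(\ga(t))+(s_2-s_1)\frac{1}{\e}\int_{\Om_\e}(\ga(t)^+)^p\leq c(s_2)+(s_2-s_1)=c(s_1),
\]
with strict inequality $\tilde J_{s_1}(\ga(t))<c(s_1)$ at every $t$: where $\ga$ changes sign, $\tfrac{1}{\e}\int(\ga^+)^p<1$ strictly; where $\ga$ is non-negative (pieces of $u_1$, $u_2$, $u_4$), Theorem~\ref{f31}'s kernel/sublevel-set estimates give $\tilde J_{s_2}(\ga)<c(s_2)$ strictly; and where $\ga$ is non-positive ($u_3(1)$, $-u_4$), $\tilde J_{s_1}(\ga)=\tilde J_{s_2}(\ga)\leq c(s_2)<c(s_1)$ by the strict decrease of $c$ already proved. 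Thus $\max_\ga \tilde J_{s_1}<c(s_1)$, contradicting the minimax for $c(s_1)$. The principal obstacle throughout is the verification of strict inequality at the non-positive and non-negative endpoints of the constituent path pieces; this is resolved by the strict form of the algebraic inequalities \eqref{03}--\eqref{033}, which is available precisely because the critical point $u$ changes sign with overlapping positive and negative supports.
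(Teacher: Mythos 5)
Your Lipschitz argument is essentially the paper's: both rely on the identity $\tilde J_{s_1}(u)-\tilde J_{s_2}(u)=\tfrac{s_2-s_1}{\e}\int_{\Om_\e}(u^+)^p\,dx\in[0,s_2-s_1]$ together with a near-optimal path for $c(s_2)$ to get $|c(s_1)-c(s_2)|\le|s_1-s_2|$, and hence the one-sided bound $s_1+c(s_1)\le s_2+c(s_2)$ as well.

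Where you and the paper diverge is the strict-monotonicity step, and your route is genuinely different. The paper is terse: $c(s_1)>c(s_2)$ is asserted directly from the pointwise inequality $\tilde J_{s_1}\ge\tilde J_{s_2}$ on $\mc S$, and $s_1+c(s_1)<s_2+c(s_2)$ is obtained by simply invoking Theorem~\ref{f31} for the two curve points, without further detail. You instead prove both strict inequalities by contradiction, rerunning the path construction from the proof of Theorem~\ref{f31} (once at level $c(s_1)$ with $s=s_1$, once at level $c(s_2)$ with $s=s_2$), and upgrading the non-strict estimate $\tilde J_s\le\mu$ along $u_1,u_2,u_3,\pm u_4$ to a strict one: where the path changes sign you use the strict drop $(s_2-s_1)\e^{-1}\int_{\Om_\e}(\ga^+)^p$, and at the one-signed endpoints such as $u_3(1)=-\e^{1/p}u^-/\|u^-\|_{L^p(\Om_\e)}$ you use the strict form of \eqref{03}--\eqref{033}. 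This is considerably more labor than the paper's two-line derivation, but it is also more self-contained and in fact fills a real gap: the step ``$\tilde J_{s_1}>\tilde J_{s_2}$ on $\mc S$, hence $c(s_1)>c(s_2)$'' is not quite rigorous as stated, since the pointwise inequality is not strict where $u^+\equiv 0$, and even a strict pointwise inequality does not automatically transfer to the inf-max.

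The one piece of your plan that deserves to be spelled out rather than asserted is precisely the strictness of \eqref{03}--\eqref{033} when $u$ changes sign, because it is the linchpin at the one-signed points (and is also needed at $\e^{1/p}u^\pm/\|u^\pm\|_{L^p(\Om_\e)}$ to place the start of $u_4$ strictly inside the open sublevel set $\mc O$). It does hold: since $|\{u>0\}|>0$ and $|\{u<0\}|>0$, the cross terms $u^+(x)u^-(y)+u^+(y)u^-(x)$ multiplied by the strictly positive kernel contribute a strictly positive amount on a product set of positive measure in $\mb R^{2n}$. With that verification supplied, your plan is correct.
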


\begin{proof}
Let $s_1<s_2$ then $\tilde{J}_{s_1}(u)>\tilde{J}_{s_2}(u)$ for all $u\in \mc S$. So we have $c(s_1)>c(s_2)$. Now for every $\eta>0$ there exists $\ga\in\Gamma$ such that $\ds\max_{u\in \ga[-1,1]}\tilde{J}_{s_2}(u)\leq c(s_2)+\eta,$
and so
\[0\leq c(s_1)- c(s_2)\leq \max_{u\in \ga[-1,1]}\tilde{J}_{s_1}(u)- \max_{u\in \ga[-1,1]}\tilde{J}_{s_2}(u)+\eta.\]
Let $u_0\in\ga[-1,1]$ such that $\ds\max_{u\in \ga[-1,1]}\tilde{J}_{s_1}(u)=\tilde{J}_{s_1}(u_0)$. Then
\[0\leq c(s_1)- c(s_2)\leq \tilde{J}_{s_1}(u_0)-\tilde{J}_{s_2}(u_0)+\eta \leq s_2-s_1+\eta,\]
as $\eta>0$ is arbitrary so the curve $\mc C$ is Lipschitz continuous
with constant $\leq 1$. \\
\noi Next, in order to prove that the curve is decreasing, it suffices to argue for $s>0$. Let $0<s_1<s_2$ then it implies $c(s_1)> c(s_2)$. On the other hand, since $(s_1+c(s_1), c(s_1))$, $(s_2+c(s_2), c(s_2))\in\sum_p$, Theorem \ref{f31} implies that $s_1+c(s_1)< s_2+c(s_2)$, which completes the proof.\QED
\end{proof}
\noi As $c(s)$ is decreasing and positive so the limit of $c(s)$ exists as $s\ra \infty$.
\begin{Theorem}
If $n\geq p \al$, then the limit of $c(s)$ as $s\ra \infty$ is	$\la_{1,\e}$.
\end{Theorem}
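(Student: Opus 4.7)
Since $c(s) > \la_{1,\e}$ for every $s \geq 0$ (Proposition~\ref{fga}) and $s \mapsto c(s)$ is strictly decreasing by the previous Proposition of this section, the limit $\ell := \lim_{s \to \infty} c(s)$ exists and satisfies $\ell \geq \la_{1,\e}$. The content of the statement is the reverse inequality. To establish it, I would show that for each prescribed $\eta > 0$ there is a path $\ga_\eta \in \Ga$ (independent of $s$) and $s_0 = s_0(\eta) > 0$ such that $\sup_{t \in [-1,1]} \tilde{J}_s(\ga_\eta(t)) \leq \la_{1,\e} + \eta$ for all $s \geq s_0$. By the minimax definition \eqref{feq18} this forces $c(s) \leq \la_{1,\e} + \eta$ for $s \geq s_0$, and letting $\eta \to 0^+$ concludes.

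The construction begins by selecting a connected open set $A \subsetneq \Om_\e$ with $\la_{1,\e}(A) \leq \la_{1,\e} + \eta/2$; such an $A$ exists by exhausting $\Om_\e$ from inside and using truncations of $\phi_{1,\e}$ by cutoffs supported in $A$ as competitors in the Rayleigh quotient that characterises $\la_{1,\e}(A)$, with the hypothesis $n \geq p\al$ ensuring these cutoff products lie in $\mc W^{\al,p}$ with controlled fractional seminorm, so that $\la_{1,\e}(A) \to \la_{1,\e}$ as $A \nearrow \Om_\e$. Let $\phi_A \geq 0$ denote a corresponding normalised eigenfunction with $\phi_A \in \mc S$. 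I would assemble $\ga_\eta$ as the concatenation of three pieces. On $\ga^{(1)}$, a deformation from $-\phi_{1,\e}$ to $-\phi_A$ inside the cone of nonpositive elements of $\mc S$, the Fu\v{c}ik penalty drops out and $\tilde{J}_s$ reduces to the Rayleigh quotient, which an interpolation estimate in the spirit of Lemma~\ref{f3} keeps below $\la_{1,\e}(A) \leq \la_{1,\e} + \eta$. On $\ga^{(3)}$, the symmetric deformation from $\phi_A$ to $\phi_{1,\e}$ in the cone of nonnegative functions, every intermediate $u$ has $\frac{1}{\e}\int_{\Om_\e}(u^+)^p = 1$, so the penalty contributes $-s$ and $\tilde{J}_s \leq \la_{1,\e}(A) - s \leq \la_{1,\e}$ once $s$ is large. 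The delicate piece $\ga^{(2)}$ bridges $-\phi_A$ to $\phi_A$: I would route it through a two-bump function $\phi_{A_1} - \phi_{A_2}$ with $A_1, A_2 \subset \Om_\e$ disjoint copies of a small ball, and interpolate using the $\sigma_t$-type paths of Lemma~\ref{f3}, so that the $L^p$-mass of the positive part is bounded below by a fixed $\kappa > 0$ except at a single sign-flip parameter; the penalty is then at most $-\kappa s$ and again absorbs any bounded seminorm growth.

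The principal obstacle is that on $\ga^{(2)}$ the nonlocal seminorm $[u]_{\al,p}^p$ does not split cleanly as $[u^+]_{\al,p}^p + [u^-]_{\al,p}^p$: the expansion of $|u(x) - u(y)|^p$ carries a cross contribution $2u^+(x)u^-(y) + 2u^-(x)u^+(y)$ inside the outer $(\cdot)^{p/2}$ that persists even when the supports of $u^+$ and $u^-$ are pointwise disjoint, so a naive convex bound on the middle piece is not available. Controlling this cross term uniformly in the interpolation parameter is exactly where the dimensional hypothesis $n \geq p\al$ is invoked: via the fractional Sobolev embedding $\mc W^{\al,p} \hookrightarrow L^q(\Om)$ with $p < q \leq p_\al^* = \frac{np}{n - p\al}$ the cross integrals are bounded by a multiple of $\|\phi_A\|_{\al,p}^p$ that is independent of $s$, so that for $s$ sufficiently large the $-\frac{s}{\e}\int_{\Om_\e}(u^+)^p$ term dominates along the whole piece. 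Concatenating the three estimates yields $\ga_\eta \in \Ga$ with $\max_t \tilde{J}_s(\ga_\eta(t)) \leq \la_{1,\e} + \eta$ for $s \geq s_0$, and hence $\ell \leq \la_{1,\e}$.
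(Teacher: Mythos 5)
Your proposal takes a genuinely different route from the paper, and it contains gaps that I do not think are repairable in the form you describe.

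The paper does \emph{not} construct a path on which $\tilde{J}_s \leq \la_{1,\e}+\eta$. It argues by contradiction. It fixes once and for all a single path
\[
\ga(t)= \frac{\e^{1/p}\bigl(t\phi_{1,\e} +(1-|t|)\phi\bigr)}{\|t\phi_{1,\e} +(1-|t|)\phi\|_{L^{p}(\Om_{\e})}},
\]
where $\phi\in\mc W^{\al,p}$ is chosen to be unbounded from above near a point of $\Om_\e$, so that no inequality $\phi\leq r\phi_{1,\e}$ can hold. If $c(s)\not\to\la_{1,\e}$, then for some $\de>0$ the maximizer $t_s$ of $\tilde J_s\circ\ga$ satisfies $\tilde J_s(\ga(t_s))\geq\la_{1,\e}+\de$ for all $s$. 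Since the penalty $-\tfrac{s}{\e}\int_{\Om_\e}(v_{t_s}^+)^p$ would otherwise drive $J_s$ to $-\infty$, one must have $\int_{\Om_\e}(v_{t_s}^+)^p\to 0$, and then, letting $t_s\to\tilde t$ and using the non-domination property of $\phi$, the only possibility is $\tilde t=-1$, i.e.\ $v_{\tilde t}=-\phi_{1,\e}$. Passing to the limit in the inequality gives $\la_{1,\e}\geq\la_{1,\e}+\de$, a contradiction. No cross-term estimate is ever needed.

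This is precisely where your reading of the hypothesis $n\geq p\al$ goes wrong. The hypothesis is invoked to ensure that $\mc W^{\al,p}\not\hookrightarrow L^\infty$, so that one may pick $\phi$ unbounded near some interior point, guaranteeing no $r$ with $\phi\leq r\phi_{1,\e}$ exists. It is \emph{not} used for Sobolev-embedding control of the cross integrals in the nonlocal seminorm, as you claim, nor for ensuring that cutoffs of $\phi_{1,\e}$ lie in $\mc W^{\al,p}$ with controlled seminorm. For the opposite regime $n<p\al$, every $\phi\in\mc W^{\al,p}$ is continuous, so for a continuous $\phi$ one always has some $r$ with $\phi\leq r\phi_{1,\e}$ on $\Om_\e$ (assuming $\phi_{1,\e}$ is bounded below away from zero there), and the argument, and indeed the theorem, would fail.

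There is also a genuine gap in your middle segment $\ga^{(2)}$. Your first and third segments are fine: on the cone of nonnegative or nonpositive functions, $\tilde J_s$ reduces to the Rayleigh quotient, which Lemma~\ref{f3} together with convexity of $t\mapsto t^p$ keeps below $\max\{\la_{1,\e},\la_{1,\e}(A)\}$, and the $-s$ penalty on the nonnegative side absorbs everything for $s$ large. But you acknowledge that the penalty $-\tfrac{s}{\e}\int_{\Om_\e}(u^+)^p$ is bounded below by $-\kappa s$ only \emph{away from} a sign-flip region; and precisely at the endpoints of $\ga^{(2)}$ (where $u^+\equiv 0$) and in a neighbourhood of them, the penalty is $o(1)$ in $s$ and cannot absorb anything. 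At those points $\tilde J_s(u)$ equals the Rayleigh quotient of $u$, and you need that quotient $\leq\la_{1,\e}+\eta$. The problem is that deforming away from $-\phi_A$ towards the two-bump $\phi_{A_1}-\phi_{A_2}$ does \emph{not} keep the Rayleigh quotient under control: the nonpositive-part endpoint of your bridging involves the eigenfunction of a small ball $A_2$, and by Lemma~\ref{f111} (monotonicity of $\la_{1,\e}$ under inclusion), $\la_{1,\e}(A_2)$ grows as $A_2$ shrinks, so your stated bound fails there. There is no uniform-in-$s$ gain from the penalty exactly where you need it most. The paper's contradiction argument circumvents this entirely by forcing the asymptotic maximizer to be $-\phi_{1,\e}$ itself, whose Rayleigh quotient is \emph{exactly} $\la_{1,\e}$, rather than attempting to engineer a path on which the quotient is small everywhere the penalty is small.
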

\begin{proof}
For $n\geq  p \al $, we can choose a function $\phi\in \mc W^{\al,p} $ such	that there does not exist $r\in\mb R$ such that $\phi(x)\leq r \phi_{1,\e}(x)$ a.e. in $\Om_{\e}$(it suffices to take $\phi\in \mc W^{\al,p}$ such that it is unbounded from above in a neighborhood of some point $0\ne x\in\Om_{\e}$). Suppose that the result is not true then there exists $\de>0$ such that
$\ds\max_{u\in \ga[-1,1]}\tilde{J}_{s}(u)\geq \la_{1,\e} +\de \;\mbox{for all}\;\ga\in \Ga\;\mbox{and all}\; s\geq 0.$ Consider a path	$\ga\in \Ga$ by
\[\ga(t)= \frac{\e^{\frac{1}{p}}(t\phi_{1,\e} +(1-|t|)\phi)}{\|t\phi_{1,\e} +(1-|t|)\phi\|_{L^{p}(\Om_{\e})}}\;\text{for all}\; t\in[-1,1].\]
Now, for every $s>0$, let $t_s\in [-1,1]$ satisfy
$\ds\max_{t\in[-1,1]}\tilde{J_s}(\ga(t))= \tilde{J_s}(\ga(t_s))$. Let $v_{t_s}= {t_s\phi_{1,\e} +(1-|t_s|)\phi}$. Then we have
\begin{align}\label{feq4.1}
\tilde{J_s}(v_{t_s})\geq \frac{(\la_{1,\e}+\de)}{\e}\int_{\Om_{\e}} |v_{t_s}|^p.
\end{align}
Letting $s\ra \infty$, we can assume a subsequence $t_s\ra \tilde t\in[-1,1]$. Then $v_{t_s}$ is bounded in $\mc W^{\al,p}$. So, from last inequality
we obtain $\int_{\Om_{\e}}(v_{t_s}^{+})^p dx \ra 0$ as $s\ra\infty$, which forces
\[\int_{\Om_{\e}}((\tilde{t}\phi_{1,\e} +(1-|\tilde{t}|)\phi)^+)^p dx=0.\]
Hence, $\tilde{t}\phi_{1,\e} +(1-|\tilde{t}|)\phi \leq 0$. By the choice of $\phi$, $\tilde{t}$ must be equal to $-1$. Passing to the limit in \eqref{feq4.1}, we obtain
\begin{align*}
\frac{\la_{1,\e}}{\e}\int_{\Om_{\e}}\phi_{1,\e}^{p}dx =\La_{n,p}(1-\al) \int_{Q}\frac{|\phi_{1,\e}(x)-\phi_{1,\e}(y)|^p}{|x-y|^{n+p\al}} dxdy+\int_{\Om}|\phi_{1,\e}|^p dx  \geq \frac{(\la_{1,\e}+\de)}{\e}\int_{\Om_{\e}} |v_{t_s}|^p.
\end{align*}
We arrive at a contradiction that $\delta \leq 0$. Hence $c(s)\ra \la_{1,\e}$ as $s\ra\infty$.\QED
\end{proof}
\section{Non Resonance between $(\la_1,\la_1)$ and $\mc C$}

\noi In this section, we will study the non-resonance problem with respect to the Fu\v{c}ik spectrum for $p=2$ case.
\begin{Lemma}\label{fle31}
Let $(a,b)\in \mc C$, and let $m(x)$, $b(x)\in L^{\infty}(\Om)$ satisfying
\begin{align}\label{feq31}
\la_{1,\e}\leq m(x)\leq a,\;\;\la_{1,\e}\leq b(x)\leq b.
\end{align}
 Assume that
\begin{align}\label{feq32}
\la_{1,\e}<m(x)\;\mbox{and}\; \la_{1,\e}<b(x)\;\mbox{ on subsets of positive measure of } \Om_{\e}.
\end{align}
Then any non-trivial solution $u$ of
\begin{equation}\label{feq33}
\La_{n,2}(1-\al)(-\De)^{\al} u +u = \frac{\chi_{\Om_{\e}}}{\e}(m(x)u^{+} - b(x) u^{-}) \; \text{in}\;
\Om, \quad \mc{N}_{\al,2} u = 0 \;  \mbox{ in }\; \mb R^n \setminus \overline{\Om},
\end{equation}
changes sign in $\Om_{\e}$ and \[m(x)=a\; \mbox{a.e. on}\;\{x\in \Om_{\e} :
u(x)>0\}\;\quad\mbox{and}\quad \;b(x)=b\; \mbox{a.e. on}\;\{x\in \Om_{\e} : u(x)<0\}.\]
\end{Lemma}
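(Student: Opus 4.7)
The plan is to first establish the sign change of $u$ via a test-function argument with $\phi_{1,\e}$, and then derive the equalities by a contradiction against the minimax characterization of $c(s)$ in Proposition \ref{fga}. For the sign change, suppose $u \geq 0$ and $u \not\equiv 0$ (the case $u \leq 0$ is symmetric, with $b(x)$ playing the role of $m(x)$). Testing \eqref{feq33} with $\phi_{1,\e}$ and the eigenvalue equation of $\phi_{1,\e}$ with $u$, then subtracting, yields
\[
\frac{1}{\e}\int_{\Om_\e}(m(x) - \la_{1,\e})\, u\,\phi_{1,\e}\, dx = 0.
\]
The integrand is non-negative by \eqref{feq31}; since $\phi_{1,\e} > 0$ in $\Om$ by simplicity of $\la_{1,\e}$ and $u > 0$ a.e.\ in $\Om$ by the strong maximum principle for $\La_{n,2}(1-\al)(-\De)^{\al} + I$, it is strictly positive on the set $\{m > \la_{1,\e}\} \cap \Om_\e$, which has positive measure by \eqref{feq32}; contradiction. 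Hence $u$ changes sign.

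For the equalities, set $s := a-b$, so that $b = c(s)$ by hypothesis. Assume, for contradiction, that $m(x) < a$ on a positive-measure subset of $\{u > 0\}$ (the case $b(x) < b$ is handled symmetrically). Testing \eqref{feq33} separately with $u^+$ and $u^-$ and using the identity
\[
(u(x)-u(y))(u^\pm(x)-u^\pm(y)) = (u^\pm(x)-u^\pm(y))^2 + u^\mp(x)u^\pm(y) + u^\mp(y)u^\pm(x),
\]
we obtain, with $K := \La_{n,2}(1-\al)\int_Q \frac{u^+(x)u^-(y) + u^-(x)u^+(y)}{|x-y|^{n+2\al}}\,dxdy > 0$ (strict since $u$ changes sign),
\begin{align*}
\La_{n,2}(1-\al)[u^+]_{\al,2}^2 + \int_\Om(u^+)^2 + K &= \frac{1}{\e}\int_{\Om_\e} m(x)(u^+)^2,\\
\La_{n,2}(1-\al)[u^-]_{\al,2}^2 + \int_\Om(u^-)^2 + K &= \frac{1}{\e}\int_{\Om_\e} b(x)(u^-)^2.
\end{align*}

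Normalize $v := \e^{1/2}u/\|u\|_{L^2(\Om_\e)}$ and $v^{\pm} := \e^{1/2}u^{\pm}/\|u^{\pm}\|_{L^2(\Om_\e)}$, all in $\mc S$. The identities above combined with $m(x)\leq a$, $b(x)\leq b$, and the strict inequality yield $\tilde{J_{s}}(v), \tilde{J_{s}}(v^+), \tilde{J_{s}}(-v^-) < c(s)$. Exactly as in the proof of Theorem \ref{f31}, introduce the paths
\[
u_1(t) := \frac{u^+-(1-t)u^-}{\e^{-1/2}\|u^+-(1-t)u^-\|_{L^2(\Om_\e)}}, \quad u_3(t) := \frac{(1-t)u^+-u^-}{\e^{-1/2}\|(1-t)u^+-u^-\|_{L^2(\Om_\e)}}, \quad t \in [0,1].
\]
Lemma \ref{f2} with $U = u^+(x)-u^+(y)$, $V = u^-(x)-u^-(y)$ (so $UV \leq 0$), together with the positive defect $\eta + K t^2 > 0$ produced by the strict hypothesis $m(x)<a$ (with $\eta>0$) and the positivity of $K$, sharpens the $\leq c(s)$ estimate from the proof of Theorem \ref{f31} to $\tilde{J_{s}}(u_1(t)), \tilde{J_{s}}(u_3(t)) < c(s)$ for all $t \in [0,1]$, joining $v$ to $v^+$ and to $-v^-$ in $\mc S$. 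To close the loop, set $\sigma_t := \sqrt{(1-t)(v^+)^2 + t\phi_{1,\e}^2}$ and $\tilde\sigma_t := \sqrt{(1-t)(v^-)^2 + t\phi_{1,\e}^2}$, both in $\mc S$; Lemma \ref{f3} together with $((1-t)x+ty)^2 \leq (1-t)x^2 + ty^2$ give
\[
\La_{n,2}(1-\al)[\sigma_t]_{\al,2}^2 + \int_\Om \sigma_t^2 \leq (1-t)\bigl(\tilde{J_{s}}(v^+) + s\bigr) + t\la_{1,\e},
\]
so $\tilde{J_{s}}(\sigma_t) < c(s)$ for all $t \in [0,1]$, and similarly $\tilde{J_{s}}(-\tilde\sigma_t) < c(s)$. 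Concatenating the four segments produces $\ga \in \Ga$ joining $-\phi_{1,\e}$ to $\phi_{1,\e}$ with $\max_{t \in [-1,1]} \tilde{J_{s}}(\ga(t)) < c(s)$, contradicting the minimax formula \eqref{feq18}.

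The main obstacle is propagating the strict inequality from either $m(x) < a$ or $b(x) < b$ uniformly through all four path segments. The positivity of $K$, a direct consequence of the sign change established in the first step, is essential, as it upgrades the weak bounds $\leq c(s)$ underlying the proof of Theorem \ref{f31} into the strict inequality $< c(s)$ required to contradict the minimax definition of $c(s)$.
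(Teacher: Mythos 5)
Your proposal reaches the same conclusion as the paper but by a genuinely different route, especially for the equality part. For the sign change, you test \eqref{feq33} against $\phi_{1,\e}$ and the eigenvalue equation against $u$ to obtain $\int_{\Om_\e}(m-\la_{1,\e})u\,\phi_{1,\e}=0$, and then invoke a strong maximum principle to conclude $u>0$ a.e. and contradict \eqref{feq32}. The paper instead observes that a nonnegative $u$ is a first eigenfunction of the weighted problem (so $\la_{1,\e}(m)=1$) and plugs $\phi_{1,\e}$ into the Rayleigh quotient for that weighted problem, obtaining $1>1$. This avoids any maximum principle for the solution $u$ itself; your version tacitly relies on a strong maximum principle for $\La_{n,2}(1-\al)(-\De)^\al+I$ with the nonlocal Neumann condition, which is plausible but not established anywhere in the paper or its references, so you should either prove it or argue as the paper does.

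For the equalities, the paper and you diverge more substantially. The paper sets up a new sign-changing function $v=v_1-v_2$ built from first eigenfunctions on carefully perturbed nodal components $\tilde{\mc O}_1,\tilde{\mc O}_2$ (using the strict domain monotonicity Lemma \ref{f111}), checks the Rayleigh inequalities \eqref{feq38}, and then re-runs the path construction of Theorem \ref{f31} with $v$ in place of $u$, including an appeal to Lemma \ref{fle01} to manufacture the final segment to $\pm\phi_{1,\e}$. You instead keep the actual solution $u$, use the exact testing identities $\La_{n,2}(1-\al)[u^\pm]^2_{\al,2}+\int_\Om(u^\pm)^2+K=\frac{1}{\e}\int_{\Om_\e}m(x)(u^+)^2$ resp.\ $\frac{1}{\e}\int_{\Om_\e}b(x)(u^-)^2$, and observe that the strict defect produced by $m<a$ on a positive-measure subset of $\{u>0\}$ together with the cross-term $K>0$ (available precisely because of the sign change just proved) pushes the Lemma \ref{f2} estimates for $u_1(t),u_3(t)$ strictly below $c(s)$, uniformly on $[0,1]$ (the slack is $(1-t)^2\eta_0 + Kt^2$, positive at both endpoints). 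You then close the loop with the $\sigma_t,\tilde\sigma_t$ paths built from Lemma \ref{f3}, whose estimates follow from $\la_{1,\e}-s<\la_{1,\e}<c(s)$ and $\tilde{J}_s(v^\pm)<c(s)$. This is cleaner than the paper's route: it avoids the auxiliary construction of $v$, the geometric modification of nodal domains and Lemma \ref{f111}, and the topological argument of Lemma \ref{fle01}, at the price of only working with $u$ (which is exactly the right thing here, since $u$ is the object one is trying to constrain). Two minor points: state explicitly that by symmetry one may assume $a\ge b$ so that $b=c(s)$, and note that in the stated pointwise identity for $(u(x)-u(y))(u^-(x)-u^-(y))$ a sign is off (the derived energy identity you write down is nevertheless correct).
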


\begin{proof}
Let $u$ be a nontrivial solution of $\eqref{feq33}$. Replacing $u$ by	$-u$ if necessary. we can assume that the point $(a,b)$ in $\mc C$ is such that $a\geq b$. We first claim that $u$ changes sign in $\Om_{\e}$. Suppose by contradiction that this is not true, first consider the case $u\geq 0$, (case $u\leq 0$ can be proved similarly). Then $u$ solves
\[\La_{n,2}(1-\al)(-\De)^{\al}u +u = \frac{\chi_{\Om_{\e}}}{\e}m(x) u^{+} \; \text{ in }\;\Om, \quad \mc{N}_{\al,2} u = 0 \;  \mbox{ in }\; \mb R^n \setminus \overline{\Om}.\]
This means that $u $ is an eigenfunction of the problem with weight $m(x)$ corresponding to the eigenvalue equal to one. From the definition of the first eigenvalue of the problem with weight $m(x)\geq \la_{1,\e}$, we have
\begin{equation}\label{feq34} 
\la_{1,\e}(m(x))=  \inf_{0 \not \equiv u\in \mc W^{\al,2}  } \left\{ \frac{\La_{n,2}(1-\al)\int_{Q} \frac{|u(x)-u(y)|^2}{|x-y|^{n+2\al}} dxdy +\int_{\Om}|u|^2(x) dx}{\frac{1}{\e}\int_{\Om_{\e}}m(x)  |u|^2 dx}:  \right\}=1.
\end{equation}
\noi From $\eqref{fle31}$, $\eqref{feq31}$ and $\eqref{feq34}$, we have	
\begin{align*}
1=&\frac{\La_{n,2}(1-\al)\int_{Q} {|\phi_{1,\e}(x)-\phi_{1,\e}(y)|^2}{|x-y|^{-(n+2\al)}} dxdy+\int_{\Om}|\phi_{1,\e}|^2(x) dx}{\la_{1,\e}}\\
>&\frac{\La_{n,2}(1-\al)\int_{Q} {|\phi_{1,\e}(x)-\phi_{1,\e}(y)|^2}{|x-y|^{-(n+2\al)}} dxdy +\int_{\Om}|\phi_{1,\e}|^2(x) dx}{ \frac{1}{\e}\int_{\Om_{\e}} m(x)|\phi_{1,\e}|^2 dx}\geq 1,
\end{align*}
which is a contradiction. Hence, $u$ changes sign on $\Om_{\e}$.\\
\noi Let suppose by contradiction that either
\begin{align}\label{feq35}
|\{x\in \Om_{\e} : m(x)<a\; \mbox{and}\; u(x)>0\}|>0
\end{align}
or
\begin{align}\label{feq36}
|\{x\in \Om_{\e} : b(x)<b\; \mbox{and}\; u(x)<0\}|>0.
\end{align}
\noi Suppose that $\eqref{feq35}$ holds (a similar argument will hold for $\eqref{feq36})$. Put $a-b= s\geq 0$. Then $b= c(s)$, where $c(s)$
is given by $\eqref{feq18}$. We show that there exists a path $\ga\in\Ga$ such that
\begin{align}\label{feq37}
\max_{u\in \ga[-1,1]}\tilde{J}_{s}(u)<b,
\end{align}
which gives a contradiction with the definition of $c(s)$, prove the last part of the Lemma.

\noi In order to construct $\ga$ we show that there exists of a function $v\in \mc W^{\al,2}$ such that it changes sign and satisfies
\begin{align}\label{feq38}
&\frac{\La_{n,2}(1-\al)\int_{Q} {|v^{+}(x)-v^{+}(y)|^2}{|x-y|^{-(n+2\al)}} dxdy + \int_{\Om}(v^{+})^2 dx}{\frac{1}{\e} \int_{\Om_{\e}} (v^{+})^2 dx}
<a \quad \mbox{and}\notag\\
&\quad\frac{\La_{n,2}(1-\al)\int_{Q} {|v^{-}(x)-v^{-}(y)|^2}{|x-y|^{-(n+2\al)}}
dxdy +\int_{\Om}(v^{-})^2 dx}{\frac{1}{\e}\int_{\Om_{\e}} (v^{-})^2 dx} <b.
\end{align}
Let  $\mc O_1$ be a component of $\{x\in \Om_{\e} : u(x)>0\}$ such that  $|\{x\in\mc O_1:m(x)<a\}| >0$ and $\mc O_2$ be a component of $\{x\in \Om_{\e} : u(x)<0\}$ such that $|\{x\in\mc O_2:b(x)<b\}| >0$ . Define the eigenvalue problem
\begin{equation}\label{feq42}
\La_{n,2}(1-\al)(-\De)^{\al} u +u = \frac{\chi_{\mc O_{i}}}{\e}(\la u) \; \text{ in }\;
\Om, \quad \mc{N}_{\al,2} u = 0 \; \quad  \mbox{ in }\; \mb R^n \setminus \overline{\Om},\quad i=1,2.
\end{equation}
Let $\la_{1,\e}(\mc O_i)$ denotes the first eigenvalue of  \eqref{feq42}. Next, we claim that
\begin{align}\label{feq39}
\la_{1,\e}(\mc O_1)<a \quad \mbox{and} \quad \la_{1,\e}(\mc O_2)< b,
\end{align}
\noi where $\la_{1,\e}(\mc O_i)$ denotes the first eigenvalue of $\La_{n,2}(1-\al)(-\De)^{\al} u+u$  on $\mc W^{\al,2}$ and
\begin{align*}
\la_{1,\e}(\mc O_1)=&\frac{\La_{n,2}(1-\al)\int_{Q} {|u(x)-u(y)|^2}{|x-y|^{-(n+2\al)}} dxdy+ \int_{\Om}|u|^2 dx}{\frac{1}{\e}\int_{\mc O_1} |u|^2 dx}\\
&<a\frac{\La_{n,2}(1-\al)\int_{Q} {|u(x)-u(y)|^2}{|x-y|^{-(n+2\al)}} dxdy+ \int_{\Om}|u|^2 dx}{\frac{1}{\e}\int_{\mc O_1} m(x)  |u|^2 dx} =a,
\end{align*}
since $|x\in \mc O_1: m(x)< a|>0$. This implies $\la_{1,\e}(\mc O_1)<a$. The other inequality can be proved similarly. Now with some modification on the sets $\mc O_1$ and $\mc O_2$, we construct the sets $\tilde{\mc O}_1$ and $\tilde{\mc O}_2$ such that $\tilde{\mc O}_1\cap \tilde{\mc O}_2=\emptyset$ and $\la_{1,\e}(\tilde{\mc O}_1)<a$ and $\la_{1,\e}(\tilde{\mc O}_2)<b$. For $\nu\geq 0$, ${\mc O}_1(\nu)=\{x\in {\mc O}_1 : dist(x, (\Om_{\e})^c )>\nu\}.$ By Lemma $\ref{f111}$, we have $\la_{1,\e}(\mc O_1(\nu))\geq \la_{1,\e}(\mc O_1))$ and moreover $\la_{1,\e}(\mc O_1(\nu))\ra \la_{1,\e}(\mc O_1))$ as $\nu\ra0$. Then there exists $\nu_0>0$ such that
\begin{equation}\label{feq40}
\la_{1,\e}(\mc O_1(\nu))<a\;\mbox{ for all }\;0\leq \nu\leq \nu_0.
\end{equation}
Let $x_0\in \partial \mc O_2\cap \Om_{\e}$ (is not empty as $\mc O_1\cap\mc O_2=\emptyset)$ and choose $0<\nu<\min\{\nu_0, dist(x_0,\Om_{\e}^c)\}$ and $\tilde{\mc O}_1=\mc O_1(\nu)$ and $\tilde{\mc O}_2=\mc O_2 \cup B(x_0, \frac{\nu}{2})$. Then $\tilde{\mc O}_1\cap \tilde{\mc O}_2=\emptyset$ and by $\eqref{feq40}$, $\la_{1,\e}(\tilde{\mc O}_1)<a$. Since $\tilde{\mc O}_2$ is connected then by $\eqref{feq39}$ and Lemma \ref{f111}, we get $\la_1(\tilde{\mc O}_2)<b$. Now, we define $v=v_1-v_2$, where $v_i$ are the eigenfunctions associated to $\la_{i,\e}(\tilde{\mc O}_i)$. Then $v$ satisfies $\eqref{feq38}$.

\noi Thus there exist $v\in \mc W^{\al,2}$ which changes sign, satisfies condition \eqref{feq38}. Moreover we have
\begin{align*} \displaystyle
\tilde{J_s}\left(\frac{\e^{\frac{1}{2}} v}{\|v\|_{L^2(\Om_{\e})}}\right)=& \displaystyle \frac{\La_{n,2}(1-\al)\int_{Q}\frac{|v^{+}(x)-v^{+}(y)|^2}{|x-y|^{n+2\al}} dxdy+ \int_{\Om}(v^{+})^2}{\frac{1}{\e}\|v\|^2_{L^2(\Om_{\e})}}-s\frac{\int_{\Om_{\e}}(v^{+})^2 dx}{\|v\|^2_{L^2(\Om_{\e})}}\\
&\; +\frac{\La_{n,2}(1-\al)\int_{Q}\frac{|v^{-}(x)-v^{-}(y)|^2}{|x-y|^{n+2\al}} dxdy+ \int_{\Om}(v^{-})^2 dx}{\frac{1}{\e}\|v\|^2_{L^2(\Om_{\e})}}\\ &\;+4\frac{\La_{n,2}(1-\al)\int_{Q} \frac{v^{+}(x)v^{-}(y)}{|x-y|^{n+2\al}} dxdy}{\frac{1}{\e}\|v\|^2_{L^2(\Om_{\e})}}\\ <& (a-s)\frac{\int_{\Om_{\e}}(v^{+})^2 dx}{\|v\|^2_{L^2(\Om_{\e})}}+b\frac{\int_{\Om_{\e}}(v^{-})^2 dx}{\|v\|^2_{L^2(\Om_{\e})}}=b.
\end{align*}
\[\tilde{J_s}\left(\frac{\e^{\frac{1}{2}} v^+}{\|v^+\|_{L^2(\Om_{\e})}}\right)< a-s =b,\quad\tilde{J_s}\left(\frac{\e^{\frac{1}{2}} v^-}{\|v^-\|_{L^2(\Om_{\e})}}\right)< b-s. \]
Using Lemma \ref{fle01}, we have that there exists a critical point in the connected component of the set $\mc O=\{u\in\mc S:\tilde{J_{s}}(u)<b-s\}$. As the point $(a,b)\in \mc C$, the only possible critical point is $\phi_{1,\e}$, then we can construct a path from $\phi_{1,\e}$ to $-\phi_{1,\e}$ exactly in the same manner as in Theorem $\ref{f31}$ only by taking $v$ in place of $u$. Thus we have construct
a path satisfying $\eqref{feq37}$, and hence the result follows.\QED
\end{proof}
\begin{Corollary}\label{fle32}
Let $(a,b)\in \mc C$ and let $m(x)$, $b(x)\in L^{\infty}(\Om)$
satisfying $\la_{1,\e}\leq m(x)\leq a$ a.e, $\la_{1,\e}\leq b(x)\leq b$ a.e.
Assume that $\la_{1,\e}<m(x)$ and $\la_{1,\e}<b(x)$ on subsets of positive
measure on $\Om_{\e}$. If either $m(x)<a$ a.e in $\Om_{\e}$ or $b(x)<b$ a.e. in $\Om_{\e}$.
Then \eqref{feq33} has only the trivial solution.
\end{Corollary}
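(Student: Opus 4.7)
The plan is to deduce this corollary directly from Lemma \ref{fle31} by a contradiction argument; there is essentially no new analytic content to produce, only a bookkeeping step to reconcile the pointwise identification given by the lemma with the strict inequality hypothesis.

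I would proceed as follows. Suppose, for contradiction, that $u$ is a nontrivial solution of \eqref{feq33} under the hypotheses of the corollary. The assumptions of Corollary~\ref{fle32} are strictly stronger than those of Lemma~\ref{fle31} (the conditions \eqref{feq31} and \eqref{feq32} are included), so Lemma~\ref{fle31} applies. It yields that $u$ must change sign on $\Om_\e$, and in particular both sets $\{x\in\Om_\e:u(x)>0\}$ and $\{x\in\Om_\e:u(x)<0\}$ have strictly positive Lebesgue measure. Moreover, Lemma~\ref{fle31} gives the pointwise identifications
\[
m(x)=a \ \text{a.e. on}\ \{x\in\Om_\e:u(x)>0\},\qquad b(x)=b \ \text{a.e. on}\ \{x\in\Om_\e:u(x)<0\}.
\]

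Now invoke the extra hypothesis. If $m(x)<a$ a.e.\ in $\Om_\e$, then $|\{x\in\Om_\e:m(x)=a\}|=0$, which is incompatible with the identity $m(x)=a$ holding a.e.\ on the set $\{u>0\}$ of positive measure. Symmetrically, if $b(x)<b$ a.e.\ in $\Om_\e$, then $|\{x\in\Om_\e:b(x)=b\}|=0$ contradicts $b(x)=b$ a.e.\ on $\{u<0\}$. Either way, we reach a contradiction, so no nontrivial solution of \eqref{feq33} exists.

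The main (and only) obstacle is making sure Lemma~\ref{fle31} is genuinely applicable: one must check that the hypotheses \eqref{feq31} and \eqref{feq32} are assumed in the corollary (they are, verbatim), and that the sign-change conclusion forces both $|\{u>0\}|>0$ and $|\{u<0\}|>0$, so that the pointwise identifications are being asserted on sets of positive measure. Once that is in place, the argument reduces to the measure-theoretic triviality described above, and no further estimates on the operator or variational machinery are needed.
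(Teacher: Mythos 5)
Your proof is correct and follows the same route as the paper: apply Lemma \ref{fle31} to get the sign-change and the pointwise identifications $m(x)=a$ a.e.\ on $\{u>0\}$ and $b(x)=b$ a.e.\ on $\{u<0\}$, then contradict the extra strict-inequality hypothesis on a set of positive measure. Your write-up is in fact more careful than the paper's two-sentence argument (which loosely writes ``or'' where the lemma gives ``and''), and you correctly note that the sign-change forces both $\{u>0\}$ and $\{u<0\}$ to have positive measure, which is the step needed for the measure-theoretic contradiction.
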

\begin{proof}
By the Lemma $\ref{fle31}$, any non-trival solution of $\eqref{feq33}$ changes sign and $m(x)=a$ a.e on $\{x\in\Om_{\e}:u(x)>0\}$ or $b(x)=b$ a.e on
$\{x\in \Om_{\e}: u(x)<0\}$. So, by our hypotheses, $\eqref{feq33}$ has only trivial solution.\QED
\end{proof}

\noi Now, we study the non-resonance between $(\la_1,\la_1)$ and $\mc C$
\begin{align}\label{041}
\left\{
\begin{array}{lr}
\La_{n,2}(1-\al)(-\De)^{\al} u+u=  \frac{\chi_{\Om_{\e}}f(x,u)}{\e} \;\mbox{ in }\;\Om,\quad \mc N_{\al,2}u=0\mbox{ in }\; \mb R^n \setminus \overline{\Om},
\end{array}
\right.
\end{align}
where $f(x,u)/u$ lies asymptotically between $(\la_{1,\e},\la_{1,\e})$ and $(a,b)\in \mc C$. Let $f:\Om\times\mb R\ra \mb R$ be a function satisfying $L^{\infty}(\Om)$ Caratheodory conditions.  Given a point $(a,b)\in \mc C$, we assume following:
\begin{align}\label{41}
\ga_{\pm}(x)\leq \liminf_{s\ra \pm\infty}\frac{f(x,s)}{s}\leq \limsup_{s\ra\pm \infty} \frac{f(x,s)}{s}\leq \Ga_{\pm}(x)
\end{align}
hold uniformly with respect to $x$, where $\ga_{\pm}(x)$ and $\Ga_{\pm}(x)$ are $L^{\infty}(\Om)$ functions which satisfy
\begin{align}\label{42}
\left\{\begin{array}{lr}
\quad \la_{1,\e}\leq \ga_{+}(x)\leq \Ga_{+}(x)\leq a\;\mbox{a.e. in}\;\Om_{\e}\\
\quad \la_{1,\e}\leq \ga_{-}(x)\leq \Ga_{-}(x)\leq b\;\mbox{a.e. in}\;\Om_{\e}.
\end{array}\right.
\end{align}
Write $F(x,s)=\int_{0}^{s}f(x,t) dt$, we also assume the following inequalities:
\begin{align}\label{43}
 \de_{\pm}(x)\leq \liminf_{s\ra \pm\infty}\frac{2F(x,s)}{|s|^2}\leq \limsup_{s\ra\pm \infty} \frac{2F(x,s)}{|s|^2}\leq \De_{\pm}(x)
 \end{align}
hold uniformly with respect to $x$, where $\de_{\pm}(x)$ and
$\De_{\pm}(x)$ are $L^{\infty}(\Om)$ functions which satisfy
\begin{equation}\label{44}
\left\{\begin{array}{lr}
\quad \la_{1,\e}\leq \de_{+}(x)\leq \De_{+}(x)\leq a\;\mbox{a.e. in}\;\Om_{\e},\quad \la_{1,\e}\leq \de_{-}(x)\leq \De_{-}(x)\leq b\;\mbox{a.e. in}\;\Om_{\e},\\
\quad \de_{+}(x)>\la_{1,\e} \;\mbox{and}\;\de_{-}(x)>\la_{1,\e} \;\mbox{on subsets of positive measure,}\\
\quad \mbox{either}\; \De_{+}(x)< a \;\mbox{a.e. in}\;\Om_{\e}\;\mbox{or}\;\De_{-}(x)< b \;\mbox{a.e. in}\;\Om_{\e}.
\end{array}\right.
\end{equation}
\begin{Theorem}\label{th51}
Let \eqref{41}, \eqref{42}, \eqref{43} and \eqref{44} hold and
$(a,b)\in\mc C$. Then problem \eqref{041} admits at least one
solution $u$ in $\mc W^{\al,2}$.
\end{Theorem}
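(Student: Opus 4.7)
The strategy is variational: introduce the energy functional
\[\Phi(u)=\tfrac{1}{2}\La_{n,2}(1-\al)[u]_{\al,2}^{2}+\tfrac{1}{2}\|u\|_{L^{2}(\Om)}^{2}-\tfrac{1}{\e}\int_{\Om_{\e}}F(x,u)\,dx\]
on $\mc W^{\al,2}$. The linear growth $|f(x,s)|\le C(1+|s|)$ implicit in \eqref{41}--\eqref{42} makes $\Phi\in C^{1}(\mc W^{\al,2},\mb R)$, and its critical points are precisely the weak solutions of \eqref{041}. I plan to produce such a critical point by a min-max argument modeled on Proposition \ref{fga}, with Proposition \ref{fpr1} as the underlying abstract mechanism.

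The main obstacle is verifying the Palais--Smale condition for $\Phi$. Given a PS sequence $\{u_{k}\}$, the reduction from boundedness to strong convergence follows from the compact embedding $\mc W^{\al,2}\hookrightarrow L^{2}(\Om)$ together with the monotonicity argument already used in Lemma \ref{fle21}. To prove boundedness, argue by contradiction: if $\|u_{k}\|\to\infty$, set $v_{k}:=u_{k}/\|u_{k}\|$ and extract a subsequence with $v_{k}\rightharpoonup v$ in $\mc W^{\al,2}$ and $v_{k}\to v$ in $L^{2}(\Om)$. Conditions \eqref{41}--\eqref{42} and dominated convergence give $f(x,u_{k})/\|u_{k}\|\rightharpoonup \tilde m(x)v^{+}-\tilde b(x)v^{-}$ weakly in $L^{2}(\Om_{\e})$ with $\la_{1,\e}\le\tilde m\le a$ and $\la_{1,\e}\le\tilde b\le b$, so passing to the limit in $\Phi'(u_{k})/\|u_{k}\|\to 0$ shows that $v$ is a weak solution of \eqref{feq33} with weights $\tilde m,\tilde b$. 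Testing $\Phi'(u_{k})$ against $u_{k}$ and dividing by $\|u_{k}\|^{2}$ yields $\tfrac{1}{\e}\int_{\Om_{\e}}\tfrac{f(x,u_{k})}{u_{k}}v_{k}^{2}\to 1$, which rules out $v\equiv 0$.

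The final contradiction comes from the Ahmad--Lazer--Paul identity. Since $\Phi(u_{k})/\|u_{k}\|^{2}\to 0$, dominated convergence based on \eqref{43} produces
\[\frac{1}{\e}\int_{\Om_{\e}}\bigl(\mu_{+}(v^{+})^{2}+\mu_{-}(v^{-})^{2}\bigr)\,dx=1,\qquad \de_{\pm}\le\mu_{\pm}\le\De_{\pm}\text{ a.e.}\]
Testing the equation for $v$ against $v$ itself gives $\|v\|^{2}=\tfrac{1}{\e}\int_{\Om_{\e}}(\tilde m(v^{+})^{2}+\tilde b(v^{-})^{2})$, and comparison with the identity above combined with weak lower semicontinuity forces $\|v\|^{2}=1$ (so in fact $v_{k}\to v$ strongly in the Hilbert space $\mc W^{\al,2}$). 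Applying Lemma \ref{fle31} to the limit equation, together with the two identities $\tfrac{1}{\e}\int(\mu_{+}(v^{+})^{2}+\mu_{-}(v^{-})^{2})=1=\tfrac{1}{\e}\int(\tilde m(v^{+})^{2}+\tilde b(v^{-})^{2})$ and the bounds $\mu_{\pm}\le\De_{\pm}\le(a,b)$, forces $\mu_{+}\equiv a$ on $\{v>0\}$ and $\mu_{-}\equiv b$ on $\{v<0\}$, in direct contradiction with the strict-inequality clause in the last line of \eqref{44}. Hence $\{u_{k}\}$ is bounded and (PS) holds.

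With (PS) in hand, the min-max geometry is the exact analog of Theorem \ref{fth1}. The strict lower bounds $\de_{\pm}>\la_{1,\e}$ in \eqref{44} let one repeat the sign-splitting estimate of Proposition \ref{prop3} almost verbatim to conclude that $\pm\phi_{1,\e}$ are strict local minima of $\Phi$, while the upper bounds in \eqref{42} allow one to exhibit, via the path construction from the proof of Theorem \ref{fth1}, a curve $\ga\in\Ga$ on which $\Phi$ remains below the threshold level attached to $(a,b)\in\mc C$. Proposition \ref{fpr1} then delivers a critical value $c=\inf_{\ga\in\Ga}\max_{u\in\ga([-1,1])}\Phi(u)$ of $\Phi$, whose corresponding critical point is the weak solution of \eqref{041}.
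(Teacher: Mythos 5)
The overall plan — same energy functional $\Psi$, Palais--Smale via a blow-up / Ahmad--Lazer--Paul comparison, then a mountain-pass argument — is indeed the approach taken in the paper; the paper itself records it as Lemmas~\ref{le51} and~\ref{le52} (both with proofs deferred to~\cite{sa}) and then states that these two lemmas complete the proof. However, your reconstruction of the min-max geometry contains a genuine error.

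You claim that the strict lower bounds $\de_{\pm}>\la_{1,\e}$ in~\eqref{44} make $\pm\phi_{1,\e}$ strict local minima of $\Phi$ on $\mc W^{\al,2}$, and then invoke Proposition~\ref{fpr1} with $\Ga$ as in Proposition~\ref{fga}, i.e.\ paths in $\mc S$. Neither part works as stated. First, since $\phi_{1,\e}\in\mc S$ gives $\La_{n,2}(1-\al)[\phi_{1,\e}]^{2}_{\al,2}+\|\phi_{1,\e}\|^{2}_{L^{2}(\Om)}=\la_{1,\e}$ while~\eqref{43}--\eqref{44} force $\tfrac{1}{\e}\int_{\Om_{\e}}\de_{\pm}\phi_{1,\e}^{2}>\la_{1,\e}$, one finds $\Phi(t\phi_{1,\e})\to-\infty$ as $|t|\to\infty$; the ray through $\pm\phi_{1,\e}$ is a direction of unbounded descent, so these points cannot be local minima of the unconstrained functional $\Phi$ (indeed they need not even be critical points of $\Phi$). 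The geometry in Lemma~\ref{le52} is exactly the opposite picture: it chooses $R$ large so that $\Psi(\pm R\phi_{1,\e})$ is far below the ridge, and the content is that \emph{any} path connecting $R\phi_{1,\e}$ to $-R\phi_{1,\e}$ must climb above that level. Second, using the class $\Ga$ of curves lying on the constraint $\mc S$ (as in Proposition~\ref{fga}) produces a critical point of the restricted functional $\Phi|_{\mc S}$, hence a Lagrange multiplier; that gives a solution of a rescaled eigenvalue-type problem, not a solution of~\eqref{041}. An unconstrained min-max over paths in $\mc W^{\al,2}$ with endpoints $\pm R\phi_{1,\e}$, as in Lemma~\ref{le52}, is what is needed.

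There is also a smaller gap in the (PS) step: you invoke Lemma~\ref{fle31} for the limiting weights $\tilde m,\tilde b$, but that lemma requires $\tilde m>\la_{1,\e}$ and $\tilde b>\la_{1,\e}$ on sets of positive measure, and the hypotheses you derive from~\eqref{41}--\eqref{42} only give $\tilde m,\tilde b\ge\la_{1,\e}$. The strict inequalities in~\eqref{44} concern $\de_{\pm}$ (i.e.\ the primitive $F$), not the ratios $f(x,s)/s$, so they do not immediately transfer to $\tilde m,\tilde b$; you must instead close the argument through the ALP identity involving $\mu_{\pm}$, where those strict inequalities actually live. That reconciliation is asserted but not carried out.
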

Define the energy functional $\Psi:\mc W^{\al,2}\ra \mb R$ as
\[\Psi(u)=\frac{\La_{n,2}(1-\al)}{2}\int_{Q} \frac{|u(x)-u(y)|^2}{|x-y|^{n+2\al}}dxdy + \frac{1}{2}\int_{\Om}|u|^2 dx - \frac{1}{\e}\int_{\Om_{\e}} F(x,u) dx\]
Then $\Psi$ is a $C^{1}$ functional on $\mc W^{\al,2}$ and for all $v\in \mc W^{\al,2}$
\[\ld\Psi^{\prime}(u),v\rd=\La_{n,2}(1-\al) \int_{Q} \frac{(u(x)-u(y))(v(x)-v(y))}{|x-y|^{n+2\al}}dxdy +\int_{\Om}u v dx -\frac{1}{\e}\int_{\Om_{\e}}  {f(x,u)v} dx\]
and critical points of $\Psi$ are exactly the weak solutions of
\eqref{041}.\\
Now, we will state some Lemmas, for details of proof see Lemma 5.2 and 5.3 of \cite{sa}.
\begin{Lemma}\label{le51}
 $\Psi$ satisfies the $(P.S)$ condition on $\mc W^{\al,2}$.
 \end{Lemma}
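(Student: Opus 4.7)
The plan is the standard two-stage argument for a Palais-Smale sequence $\{u_k\}\subset\mc W^{\al,2}$ with $|\Psi(u_k)|$ bounded and $\Psi^{\prime}(u_k)\to 0$ in the dual: first establish that $\{u_k\}$ is bounded, then upgrade weak subsequential limits to strong convergence in $\mc W^{\al,2}$.

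\emph{Boundedness, by contradiction.} Assume $t_k:=\|u_k\|\to\infty$ and set $v_k:=u_k/t_k$, so $\|v_k\|=1$. Reflexivity and the compact embedding $\mc W^{\al,2}\hookrightarrow L^2(\Om)$ yield a subsequence with $v_k\rightharpoonup v$ in $\mc W^{\al,2}$ and $v_k\to v$ in $L^2(\Om)$, a.e. Divide $\Psi^{\prime}(u_k)$ by $t_k$; the pointwise envelope \eqref{41}-\eqref{42} together with a standard weak-$\ast$ $L^\infty$ extraction applied to the quotient $f(x,u_k)/u_k$ produces $m,b\in L^\infty(\Om_\e)$ with $\la_{1,\e}\le m\le a$ and $\la_{1,\e}\le b\le b$ a.e., such that $v$ is a weak solution of
\[\La_{n,2}(1-\al)(-\De)^{\al}v+v=\frac{\chi_{\Om_\e}}{\e}\bigl(m(x)v^{+}-b(x)v^{-}\bigr)\text{ in }\Om,\quad \mc N_{\al,2}v=0\text{ on }\R^n\setminus\overline{\Om}.\]

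If $v\equiv 0$, testing the normalized equation for $v_k$ against $v_k$ and using compactness on the right forces $v_k\to 0$ in $\mc W^{\al,2}$, contradicting $\|v_k\|=1$. If $v\not\equiv 0$, the plan is to reach a contradiction with Corollary \ref{fle32}. To verify its hypotheses, use in parallel the energy bound: dividing $2\Psi(u_k)/t_k^2\to 0$ and applying Fatou on the quadratic part together with the upper envelope from \eqref{43} on the $F$-integral gives
\[\La_{n,2}(1-\al)[v]^2_{\al,2}+\|v\|^2_{L^2(\Om)}\le\frac{1}{\e}\int_{\Om_\e}\bigl(\De_+(x)(v^+)^2+\De_-(x)(v^-)^2\bigr)dx,\]
whereas testing the limit equation against $v$ yields the same left-hand side equal to $\frac{1}{\e}\int_{\Om_\e}\bigl(m(v^+)^2+b(v^-)^2\bigr)dx$. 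Comparing these two relations with the envelope in \eqref{44} transfers the strict lower bound $\de_\pm>\la_{1,\e}$ on a positive-measure set into $m>\la_{1,\e}$ (resp.\ $b>\la_{1,\e}$) on positive-measure subsets, while the either/or strict upper bound ($\De_+<a$ a.e.\ or $\De_-<b$ a.e.) forces $m<a$ a.e.\ on $\Om_\e$ or $b<b$ a.e.\ on $\Om_\e$. The hypotheses of Corollary \ref{fle32} are then in force, giving $v\equiv 0$, a contradiction.

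\emph{Strong convergence.} Once $\{u_k\}$ is bounded, extract $u_k\rightharpoonup u$ in $\mc W^{\al,2}$ and $u_k\to u$ in $L^2(\Om)$. Testing $\langle\Psi^{\prime}(u_k)-\Psi^{\prime}(u),u_k-u\rangle\to 0$: the $L^2$ bilinear term and the reaction term $\frac{1}{\e}\int_{\Om_\e}\bigl(f(x,u_k)-f(x,u)\bigr)(u_k-u)\,dx$ vanish by compact embedding and the $L^\infty$-Caratheodory growth of $f$, while linearity of $(-\De)^{\al}$ in the case $p=2$ leaves $\La_{n,2}(1-\al)[u_k-u]^2_{\al,2}+\|u_k-u\|^2_{L^2(\Om)}\to 0$, which is precisely $u_k\to u$ in $\mc W^{\al,2}$.

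The main obstacle is the nontrivial sub-case in Step 1: simultaneously exploiting the $f$-level asymptotic information \eqref{41}-\eqref{42} (via $\Psi^{\prime}$) and the $F$-level information \eqref{43}-\eqref{44} (via $\Psi$) to produce the strict comparisons between $m,b$ and $\la_{1,\e},a,b$ required to invoke Corollary \ref{fle32}. This follows the pattern of Lemmas 5.2-5.3 of \cite{sa}, adapted to the nonlocal normal-derivative setting developed in Sections 2-3.
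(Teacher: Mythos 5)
The paper does not prove this lemma itself; it points to Lemmas 5.2--5.3 of \cite{sa}. Your framework (normalize an unbounded PS sequence, pass to a weak limit $v$, derive a limit equation with weak-$*$ weights $m,b$, then upgrade to strong convergence via linearity of $(-\De)^{\al}$ for $p=2$) is the right shape, but the central step has a genuine gap.

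You claim the comparison of the two displayed relations, together with the either/or condition $\De_+<a$ a.e.\ or $\De_-<b$ a.e., ``forces'' $m<a$ a.e.\ or $b<b$ a.e.\ so that Corollary \ref{fle32} applies directly. This does not follow. The weights $m,b$ are weak-$*$ limits of $f(x,u_k)/u_k$ and are constrained pointwise only by \eqref{41}--\eqref{42} to $\la_{1,\e}\le m\le a$, $\la_{1,\e}\le b\le b$; the functions $\De_\pm$ bound the independent quantity $\limsup 2F(x,s)/s^2$. The comparison you invoke is a single scalar inequality,
\begin{equation*}
\frac{1}{\e}\int_{\Om_\e}\bigl(m(v^+)^2+b(v^-)^2\bigr)\,dx\ \le\ \frac{1}{\e}\int_{\Om_\e}\bigl(\De_+(v^+)^2+\De_-(v^-)^2\bigr)\,dx,
\end{equation*}
and a scalar inequality of integrals carries no pointwise information at all: it does not imply $m\le\De_+$ anywhere, let alone $m<a$ a.e. Likewise the ``transfer'' of $\de_\pm>\la_{1,\e}$ into $m>\la_{1,\e}$ on a positive-measure set is asserted rather than argued; you must use the lower envelope $\de_\pm$ in a separate Fatou step and rule out $m\equiv b\equiv\la_{1,\e}$ by the observation that this would force $v$ to be a signed multiple of $\phi_{1,\e}$, which then contradicts $\de_\pm>\la_{1,\e}$ on a positive-measure set plugged into the energy identity.

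The argument you should follow (as in \cite{cfg} and its nonlocal adaptation in \cite{sa}) does not try to check the hypotheses of Corollary \ref{fle32}. After establishing $m>\la_{1,\e}$ and $b>\la_{1,\e}$ on positive-measure subsets, one invokes Lemma \ref{fle31} to conclude that $v$ changes sign, $m=a$ a.e.\ on $\{v>0\}$ and $b=b$ a.e.\ on $\{v<0\}$. Substituting these equalities into the equation's energy identity and combining with the upper energy bound coming from $\De_\pm$ then yields
\begin{equation*}
\frac{1}{\e}\int_{\Om_\e}\bigl(a(v^+)^2+b(v^-)^2\bigr)\,dx\ \le\ \frac{1}{\e}\int_{\Om_\e}\bigl(\De_+(v^+)^2+\De_-(v^-)^2\bigr)\,dx,
\end{equation*}
which, since $v^+\not\equiv 0$ and $v^-\not\equiv 0$, contradicts $\De_+<a$ a.e.\ (or $\De_-<b$ a.e.). Rework Step~1 along these lines; the strong-convergence Step~2 is fine.
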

\begin{Lemma}\label{le52}
There exists $R>0$ such that
\begin{align*}
\max\{\Psi(R\phi_{1,\e}),\Psi(-R\phi_{1,\e})\} < \max_{u\in \ga[-1,1]}\Psi(u),
\end{align*}
for any $\ga\in \Ga_1:=\{\ga\in C([-1,1],\mc S) : \ga(\pm 1)= \pm
R\phi_{1,\e}\}$.
\end{Lemma}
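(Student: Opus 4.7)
My plan is to verify the mountain-pass geometry for $\Psi$ at the level dictated by the curve $\mc C$, analogous to the proof of Lemma~5.3 in \cite{sa}. (Since $\pm R\phi_{1,\e}$ can lie in $\mc S$ only for $|R|=1$, I interpret $\Ga_1$ as paths in $\mc W^{\al,2}$.) The argument splits into two independent steps. First, I show that $\Psi(\pm R\phi_{1,\e})\to -\infty$ as $R\to\infty$. Using $\|\phi_{1,\e}\|^2_{\mc W^{\al,2}}=\la_{1,\e}$ and the normalisation $\int_{\Om_\e}\phi_{1,\e}^2\,dx=\e$,
\[
\Psi(R\phi_{1,\e})=\tfrac{R^2\la_{1,\e}}{2}-\tfrac{1}{\e}\int_{\Om_\e}F(x,R\phi_{1,\e})\,dx.
\]
By \eqref{43}, for any $\eta>0$ there is $C_\eta$ with $2F(x,s)\geq(\de_+(x)-\eta)(s^+)^2+(\de_-(x)-\eta)(s^-)^2-C_\eta$. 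Hypothesis \eqref{44} gives $\de_+\geq\la_{1,\e}$ a.e.\ with strict inequality on a set of positive measure, so $\frac{1}{\e}\int_{\Om_\e}\de_+\phi_{1,\e}^2\,dx = \la_{1,\e}+\mu$ for some $\mu>0$. Taking $0<\eta<\mu$, $\Psi(R\phi_{1,\e})\leq -(\mu-\eta)R^2/2 + C\to-\infty$; the analogous bound for $-R\phi_{1,\e}$ uses $\de_-$.

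The second step is to obtain a uniform lower bound for $\max_{u\in\gamma[-1,1]}\Psi(u)$ across $\gamma\in\Ga_1$. Fix $\gamma\in\Ga_1$; after perturbing so that $\gamma(t)\neq 0$ for all $t$, normalise on the sphere by $\tilde\gamma(t):=\e^{1/2}\gamma(t)/\|\gamma(t)\|_{L^2(\Om_\e)}$, which is a continuous path in $\mc S$ from $-\phi_{1,\e}$ to $\phi_{1,\e}$. Theorem~\ref{f31} and Proposition~\ref{fga} applied with $s=a-b$ give $\max_t J_s(\tilde\gamma(t))\geq c(s)=b$. Letting $t^*$ achieve the maximum, $v:=\gamma(t^*)$ and $\rho^2:=\e^{-1}\|v\|^2_{L^2(\Om_\e)}$, the inequality $J_s(v/\rho)\geq b$ rearranges to
\[
\|v\|^2_{\mc W^{\al,2}}\geq \tfrac{a}{\e}\int_{\Om_\e}(v^+)^2\,dx+\tfrac{b}{\e}\int_{\Om_\e}(v^-)^2\,dx.
\]
Inserting this together with the upper bound $2F(x,s)\leq\De_+(x)(s^+)^2+\De_-(x)(s^-)^2+$ lower order from \eqref{43}--\eqref{44}, and using $\De_\pm\leq a,b$,
\[
\Psi(v)\geq \tfrac{1}{2\e}\int_{\Om_\e}(a-\De_+)(v^+)^2\,dx+\tfrac{1}{2\e}\int_{\Om_\e}(b-\De_-)(v^-)^2\,dx-C\geq -C,
\]
uniformly in $\gamma$. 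Combining with Step~1, choose $R$ so that $\Psi(\pm R\phi_{1,\e})<-C$, yielding the strict inequality.

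The hardest part is the uniform lower bound in Step~2: the $\limsup$ condition \eqref{43} only yields $2F(x,s)\leq(\De_\pm+\eta)|s|^2+C_\eta$ with an $\eta$-loss, and the strict inequality $\De_+<a$ (or $\De_-<b$) from \eqref{44} is only a.e., not uniform. Absorbing the $\eta$-loss therefore requires a level-set decomposition $A_\kappa:=\{x\in\Om_\e:\De_+(x)\leq a-\kappa\}$ coupled with $|A_\kappa|\to|\Om_\e|$ as $\kappa\to 0^+$, together with a compactness/contradiction argument on the rescaled sequence $v/\|v\|_{\mc W^{\al,2}}$ should the direct estimate fail. The perturbation removing zeros of $\gamma$ in the projection to $\mc S$ is handled by a standard density argument.
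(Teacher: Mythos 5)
The paper itself gives no proof of this lemma (it defers to Lemma 5.2--5.3 of \cite{sa}), so there is nothing in this paper to compare against directly. Your two-step outline follows the same CFG-template that \cite{sa} uses, and Step 1 is correct: $\Psi(R\phi_{1,\e})=\tfrac{R^2}{2}\la_{1,\e}-\tfrac{1}{\e}\int_{\Om_\e}F(x,R\phi_{1,\e})\,dx$, and the $\de_\pm>\la_{1,\e}$-on-a-set-of-positive-measure part of \eqref{44} drives this to $-\infty$. (Small nit: $\la_{1,\e}$ is the \emph{weighted} energy $\La_{n,2}(1-\al)[\phi_{1,\e}]^2_{\al,2}+\|\phi_{1,\e}\|^2_{L^2(\Om)}$ of $\phi_{1,\e}$, not $\|\phi_{1,\e}\|^2_{\mc W^{\al,2}}$ in the sense of \eqref{feq12}; the formula you then use is nonetheless the right one.) Your key inequality in Step 2, obtained from $c(s)=b$ after normalising the path onto $\mc S$, is also correct: at the maximiser $v$ of $J_s\circ\tilde\gamma$,
\[
\La_{n,2}(1-\al)[v]^2_{\al,2}+\|v\|^2_{L^2(\Om)}\ \geq\ \frac{a}{\e}\int_{\Om_\e}(v^+)^2\,dx+\frac{b}{\e}\int_{\Om_\e}(v^-)^2\,dx.
\]

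The gap is in the sentence that presents the ``direct estimate'' as if it finishes the proof. From \eqref{43} you only get $2F(x,s)\leq(\De_\pm(x)+\eta)|s|^2+C_\eta$, and since $\De_+\leq a$, $\De_-\leq b$ hold only a.e.\ (not with a uniform margin), what you actually obtain is
\[
\Psi(v)\ \geq\ \frac{1}{2\e}\int_{\Om_\e}(a-\De_+-\eta)(v^+)^2\,dx+\frac{1}{2\e}\int_{\Om_\e}(b-\De_--\eta)(v^-)^2\,dx-C_\eta\ \geq\ -\frac{\eta}{2\e}\int_{\Om_\e}|v|^2\,dx-C_\eta,
\]
which is \emph{not} $\geq -C$ because neither $\|v\|$ nor $\int_{\Om_\e}|v|^2$ is a priori bounded along the admissible paths (indeed they blow up with $R$). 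You flag this in your last paragraph, but the remedy you sketch does not close the gap: the level-set decomposition $A_\kappa=\{\De_+\leq a-\kappa\}$ by itself produces exactly the same uncontrolled term $\frac{\eta}{2\e}\int_{\Om_\e\setminus A_\kappa}(v^+)^2$, so nothing is gained without first controlling the size of $v$. What is actually required is the full blow-up contradiction: assume there are admissible paths and maximisers $v_k$ with $\Psi(v_k)\to-\infty$, deduce that the energy of $v_k$ tends to $\infty$, set $w_k=v_k$ divided by its energy norm, extract a weak limit $w$ with $w_k\to w$ in $L^2(\Om_\e)$, pass to the limit in the scaled key inequality and in $\frac{2}{\e}\int_{\Om_\e}F(x,v_k)/\|v_k\|^2\geq 1$ using \eqref{43}, obtain $\int_{\Om_\e}(a-\De_+)(w^+)^2=\int_{\Om_\e}(b-\De_-)(w^-)^2=0$, and then invoke the three conditions in \eqref{44} together with the characterisation of $\la_{1,\e}$ and $c(s)=b$ to reach a contradiction. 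This indirect argument is the substantive content of the lemma; without it the uniform lower bound in Step 2 is unproved, so the proposal as written is incomplete.
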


\noi {\bf Proof of Theorem \ref{th51}:} Lemmas \ref{le51} and
\ref{le52} complete the proof.\QED


\begin{thebibliography}{21}
\bibitem{AR} A.  Ambrosetti  and P. H. Robinowitz, {\it Dual variational methods in critical point theory and applications}, Journal of Functional Analysis, 14 (1973) 349-381.

\bibitem{al} M. Alif, {\it Fu\v{c}ik spectrum for the Neumann problem with indefinite
weights}, Partial differential equations, volume 229 of Lecture
Notes in Pure and Appl. Math., Dekker, New York, (2002) 45-62.


\bibitem{bre} J. Bourgain, H. Brezis, and P. Mironescu, {\it Another look at Sobolev spaces}, Optimal Control and Partial Differential Equations, IOS P. (2001) 439-455.

\bibitem{second} L. Brasco and E. Parini, {\it The second eigenvalue of the fractional $p$-Laplacian}. Advances in Calculus of Variations, 9 (4) (2016) 323-355.

\bibitem{cg} M. Cuesta and J.-P. Gossez, {\it A variational approach to nonresonance with respect to the Fu\v{c}ik spectrum}, Nonlinear
Anal., 19 (1992) 487-500.

\bibitem{cfg} M. Cuesta, D. de Figueiredo and  J.-P. Gossez, {\it The Beginning of the Fu\v{c}ik Spectrum for the $p$-Laplacian}, Journal of Differential
Equations, 159 (1999) 212-238.

\bibitem{pe} L. M. Del Pezzo, J. D. Rossi, and A. M. Salort, {\it Fractional eigenvalue problems that approximate Steklov eigenvalues}. arXiv:1601.05290.

\bibitem{va} S. Dipierro, X. Ros-Oton, E. Valdinoci, {\it Nonlocal problems with Neumann boundary conditions}. Preprint. arXiv:1407.3313.

\bibitem{fg} D. de Figueiredo and J.-P. Gossez, {\it On the first curve of the
Fu\v{c}ik spectrum of an elliptic operator}, Differential Integral
Equations, 7 (1994) 1285-1302.

\bibitem{pal} G. Franzina and G. Palatucci,  {\it Fractional $p$-eigenvalues}, Riv. Mat. Univ. Parma,  5 (2) (2014) 373-386.

\bibitem{sa} S. Goyal and K. Sreenadh, {\it On the Fu\v{c}ik spectrum of nonlocal elliptic operators}, Nonlinear Differential Equations and Applications NoDEA, 21 (4) (2014) 567-588.

\bibitem{hardy} S. Goyal, {\it  On the eigenvalues and Fu\v{c}ik spectrum of $p$-fractional Hardy-Sobolev operator with weight function}, Applicable Analysis, (2017) 1-26.

\bibitem{mP} A. M. Micheletti and A. Pistoia, {\it A note on the resonance set for
a semilinear elliptic equation and an application to jumping
nonlinearities}, Topol. Methods Nonlinear Anal., 6 (1995) 67-80.
\bibitem{ro} Dumitru Motreanu and Patrick Winkert, {\it On the Fu\v{c}ik spectrum of $p$-Laplacian with Robin boundary condition,}
Nonlinear Analysis, 74 (2011) 4671-4681.

\bibitem{ms} G. Molica Bisci, V. Radulescu and R. Servadei, {\it Variational methods for nonlocal fractional problems,} Encyclopedia of Mathematics and its Applications, 162 (2016) Cambridge University Press, ISBN 9781107111943.

\bibitem{ros} Sandra R. Martinez and J. D. Rossi, {\it On the Fu\v{c}ik spectrum and a resonance problem for the $p$-Laplacian with a nonlinear boundary
condition}, Nonlinear Analysis Theory Methods and Applications, 59 (6) (2004) 813-848.
	
\bibitem{kpr} K. Perera, {\it Resonance problems with respect to the Fu\v{c}ik
spectrum of the $p$-Laplacian}, Electronic Journal of Differential Equations, 2002 (2002) 1-10.

\bibitem{kpe} K. Perera, {\it On the Fu\v{c}ik spectrum of the $p$-Laplacian},
 Nonlinear Differential Equations Appl. NoDEA, 11 (2) (2004) 259-270.

\bibitem{dan} K. Perera,  M. Squassina and Y. Yang, {\it  A note on the Dancer Fu\v{c}ik spectra of the fractional $p$-Laplacian and Laplacian operators,} Advances in Nonlinear Analysis, 4 (1) (2015) 13-23.

\bibitem{inf} J. V. da Silva, J. D. Rossi and A. M. Salort,  {\it The infinity-Fucik spectrum}, (2017) arXiv:1703.08234.


\end{thebibliography}
 \end{document}